\numberwithin{equation}{section}
\theoremstyle{plain}
\newtheorem{theorem}{Theorem}[section]
\newtheorem{corollary}[theorem]{Corollary}
\theoremstyle{definition}
\newtheorem{remark}[theorem]{Remark}
\newtheorem{example}[theorem]{Example}
\newtheorem{algorithm}[theorem]{Algorithm}
\newcommand{\C}{\mathbb{C}}
\newcommand{\R}{\mathbb{R}}
\newcommand{\bD}{\mathbb{D}}
\newcommand{\cO}{\mathcal{O}}
\newcommand{\cR}{\mathcal{R}}
\newcommand{\coloneq}{\mathrel{\mathop:}=}
\newcommand{\eqcolon}{=\mathrel{\mathop:}}
\newcommand{\conj}[1]{\overline{#1}}
\newcommand{\comp}[1]{#1^{\operatorname{c}}}
\newcommand{\dd}{{\operatorname{d}}}
\newcommand{\ee}{{\operatorname{e}}}
\newcommand{\ii}{{\operatorname{i}}}
\DeclarePairedDelimiter{\abs}{\lvert}{\rvert}
\DeclarePairedDelimiter{\cc}{[}{]}
\DeclarePairedDelimiter{\oo}{]}{[}
\DeclareMathOperator{\re}{Re}
\DeclareMathOperator{\im}{Im}
\DeclareMathOperator{\capacity}{cap}
\DeclareMathOperator{\id}{id}
\DeclareMathOperator{\wind}{wind}
\DeclareMathOperator{\Ri}{\mathcal{R}}
\renewcommand{\labelenumi}{\textup{(\roman{enumi})}}
\title{Walsh's Conformal Map onto Lemniscatic Domains for Polynomial Pre-images 
II}
\author{Klaus Schiefermayr\footnotemark[1] \and Olivier 
S\`{e}te\footnotemark[2]}
\date{June 30, 2023}
\begin{document}
\maketitle

\renewcommand{\thefootnote}{\fnsymbol{footnote}}

\footnotetext[1]{University of Applied Sciences Upper Austria, Campus Wels, 
Austria, \\ \texttt{klaus.schiefermayr@fh-wels.at}}

\footnotetext[2]{Institute of Mathematics and Computer Science, University of 
Greifswald, Walther-Rathenau-Stra\ss{}e~47, 17489 Greifswald, Germany.
\texttt{olivier.sete@uni-greifswald.de}}

\renewcommand{\thefootnote}{\arabic{footnote}}

\begin{abstract}
We consider Walsh's conformal map from the exterior of a 
set~$E=\bigcup_{j=1}^\ell E_j$ consisting of $\ell$~compact disjoint components 
onto a lemniscatic domain.  In particular, we are interested in the case 
when~$E$ is a polynomial preimage of $[-1,1]$, i.e., when $E=P^{-1}([-1,1])$, 
where $P$ is an algebraic polynomial of degree~$n$.  Of special interest are 
the exponents and the centers of the lemniscatic domain.  In the first part of 
this series of papers, a very simple formula for the exponents has been 
derived.  In this paper, based on general results of the first part, we give an 
iterative method for computing the centers when $E$ is the union of $\ell$ 
intervals.  Once the centers are known, the corresponding Walsh map can be 
computed numerically.  In addition, if $E$ consists of $\ell=2$ or $\ell=3$ 
components satisfying certain symmetry relations then the centers and the 
corresponding Walsh map are given by explicit formulas.  All our theorems are 
illustrated with analytical or numerical examples.
\end{abstract}

\paragraph*{Keywords:}
Walsh's conformal map, lemniscatic domain, multiply connected domain,
polynomial pre-image, critical values, Green's function, logarithmic capacity

\paragraph*{AMS Subject Classification (2020):}
30C20; 
30C35; 
65E10. 

\section{Introduction}

In his 1956 paper~\cite{Walsh1956}, Walsh obtained a canonical generalization
of the Riemann mapping theorem from simply connected domains to multiply
connected domains.  In his construction, an $\ell$-connected domain in 
$\widehat{\C} \coloneq \C \cup \{ \infty \}$ is mapped
onto the exterior of a generalized lemniscate, as indicated in the following
theorem.


\begin{theorem} \label{thm:walsh_map}
Let $E_1, \ldots, E_\ell \subseteq \C$ be disjoint, simply connected, infinite compact sets and let
\begin{equation}
E = \bigcup_{j=1}^\ell E_j,
\end{equation}
that is, $\comp{E} = \widehat{\C} \setminus E$ is an $\ell$-connected domain. Then there exists a unique compact set of the form
\begin{equation} \label{eqn:lemniscatic_domain}
L \coloneq \{ w \in \C : \abs{U(w)} \leq \capacity(E) \}, \quad
U(w) \coloneq \prod_{j=1}^\ell (w-a_j)^{m_j},
\end{equation}
where $a_1, \ldots, a_\ell \in \C$ are distinct and $m_1, \ldots, m_\ell > 0$ are real numbers
with $\sum_{j=1}^\ell m_j = 1$, and a unique
conformal map
\begin{equation}
\Phi : \comp{E} \to \comp{L}
\quad \text{with }
\Phi(z) = z + \cO(1/z) \text{ at } \infty.
\end{equation}
If $E$ is bounded by Jordan curves, then $\Phi$ extends to a homeomorphism from 
$\overline{\comp{E}}$ to $\overline{\comp{L}}$.
\end{theorem}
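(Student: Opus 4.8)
The plan is to read off both the exponents $m_j$ and the map $\Phi$ from the Green's function of $\comp E$ with pole at $\infty$, and then to recognize the image as a lemniscatic domain. Each $E_j$ is a non-degenerate continuum, so $\comp E$ is regular for the Dirichlet problem; I would let $g\coloneq g_{\comp E}(\cdot,\infty)$, so that $g>0$ is harmonic on $\comp E\setminus\{\infty\}$, vanishes on $\partial E$, and $g(z)=\log\abs z-\log\capacity(E)+\cO(1/z)$ at $\infty$. Set $m_j\coloneq\mu_E(E_j)$, where $\mu_E$ is the equilibrium measure of $E$; then $m_j>0$, $\sum_{j=1}^{\ell}m_j=1$, and $2\pi m_j$ is the period of a locally defined harmonic conjugate $\tilde g$ of $g$ taken once around $E_j$, so these periods cancel the $2\pi$ period of $\arg z$ near $\infty$. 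Consequently
\begin{equation*}
\Psi(z)\coloneq\capacity(E)\,\exp\!\big(g(z)+\ii\,\tilde g(z)\big)
\end{equation*}
is analytic, single-valued near $\infty$ with $\Psi(z)=z+\cO(1)$ there, and on $\comp E$ analytic but multivalued, with single-valued modulus $\abs{\Psi}=\capacity(E)\,e^{g}$ and multiplicative monodromy $e^{2\pi\ii m_j}$ around $E_j$. These $m_j$, being the equilibrium masses, are intrinsic to $E$.

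A product $U(w)=\prod_j(w-a_j)^{m_j}$ has exactly the monodromy $e^{2\pi\ii m_j}$ about its zero $a_j$, so the heart of the theorem is the claim that there exist distinct $a_1,\dots,a_\ell\in\C$ and a single-valued \emph{univalent} $\Phi\colon\comp E\to\widehat\C$ with $\Phi(z)=z+\cO(1/z)$ at $\infty$ such that $\Psi(z)=\prod_{j=1}^{\ell}(\Phi(z)-a_j)^{m_j}$ on $\comp E$, with $a_j$ in the component of $\widehat\C\setminus\Phi(\partial E_j)$ not containing $\infty$. Near $\infty$ this is free, since $U$ is biholomorphic there and $\Phi\coloneq U^{-1}\circ\Psi=z+\cO(1/z)$ works locally; \emph{the main obstacle is to choose the $a_j$ so that $\Phi$ continues single-valuedly and univalently over all of $\comp E$}. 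I would attack this either by a monodromy and argument-principle analysis --- $\Psi'/\Psi$ is single-valued on $\comp E$ with a simple zero at $\infty$ and exactly $\ell-1$ further zeros, namely the critical points of $g$, which must match the $\ell-1$ zeros of $\sum_j m_j/(w-a_j)$, and tracking $\arg\Psi$ along $\partial E$ and around each $E_j$ then determines admissible $a_j$ and forces injectivity --- or by a continuity argument anchored at $\ell=1$, where $\Phi$ is the exterior Riemann map and $U(w)=w-a_1$, deforming $E$ within the class of $\ell$-component sets. The crucial point, due to Walsh, is that $\Psi$ carries no monodromy beyond that of such a product.

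Granting the claim, set $L\coloneq\{w:\abs{U(w)}\le\capacity(E)\}$. Since $\abs{U(\Phi(z))}=\abs{\Psi(z)}=\capacity(E)e^{g(z)}$, which exceeds $\capacity(E)$ on $\comp E\setminus\{\infty\}$ and tends to $\capacity(E)$ at $\partial E$, the univalent map $\Phi$ sends $\comp E$ into $\comp L$; being injective, open, and proper, with $\partial E$ going to $\partial L$ and $\infty$ to $\infty$, it is a conformal bijection onto $\comp L$. The $\ell$ curves $\Phi(\partial E_j)$ bound the $\ell$ components of $L$, the monodromy relation puts exactly one $a_j$ inside each, and $\capacity(L)=\capacity(E)$ follows from the normalization $\Phi(z)=z+\cO(1/z)$. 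If the $E_j$ are Jordan curves, $g$, $\tilde g$, $\Psi$, and hence $\Phi$, extend continuously to $\partial E$, and Carath\'{e}odory's theorem upgrades $\Phi$ to a homeomorphism of the closures.

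For uniqueness, the exponents are forced, each being the intrinsic mass $\mu_E(E_j)$. Moreover, any $\Phi$ satisfying the conclusion automatically satisfies $U\circ\Phi=\Psi$: indeed $\abs{U\circ\Phi}=\capacity(E)e^{g}=\abs{\Psi}$ by the transformation of Green's functions under $\Phi$, the two sides have the same monodromy around each $E_j$, so their quotient is a unimodular analytic constant, equal to $1$ by the behavior at $\infty$. Hence two solutions $\Phi_1,\Phi_2$ give $U_1(\Phi_1(z))=U_2(\Phi_2(z))$, so $F\coloneq\Phi_2\circ\Phi_1^{-1}\colon\comp{L_1}\to\comp{L_2}$ is a conformal map with $F(w)=w+\cO(1/w)$ and $U_1=U_2\circ F$; by the rigidity of such $\infty$-normalized canonical maps this forces $F=\id$, whence $L_1=L_2$, $\{a_j^{(1)}\}=\{a_j^{(2)}\}$ and $\Phi_1=\Phi_2$.
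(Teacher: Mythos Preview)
The paper does not give its own proof of this theorem; it is quoted as Walsh's 1956 result, with alternative proofs attributed to Grunsky, Jenkins, and Landau in the references. So there is no in-paper argument to compare against, and your proposal must stand on its own.

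On its own, the proposal has the right architecture but a genuine gap at the decisive step. You correctly identify the exponents $m_j$ as the equilibrium masses $\mu_E(E_j)$ and correctly build the multivalued $\Psi(z)=\capacity(E)\exp(g+\ii\tilde g)$ with the right monodromy. But the heart of the theorem is precisely the existence of distinct $a_1,\dots,a_\ell$ and of a \emph{single-valued univalent} $\Phi$ with $U\circ\Phi=\Psi$, and here you write ``I would attack this either by a monodromy and argument-principle analysis \ldots\ or by a continuity argument'' and then ``Granting the claim, \ldots''. Neither attack is carried out, and neither is routine: the monodromy route requires solving for the $a_j$ so that the $\ell-1$ critical points of $g$ map to the $\ell-1$ zeros of $\sum_j m_j/(w-a_j)$ with matching critical values of $\Psi$ and $U$, which is a genuine solvability problem; the continuity route needs a normal-families argument that keeps the $a_j$ distinct and $\Phi$ univalent along a deformation. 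Everything after ``Granting the claim'' is fine, but that claim is the theorem.

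Your uniqueness argument also has a soft spot. From $U_1\circ\Phi_1=\Psi=U_2\circ\Phi_2$ you get $U_1=U_2\circ F$ with $F=\Phi_2\circ\Phi_1^{-1}$ normalized $F(w)=w+\cO(1/w)$, and then you appeal to ``the rigidity of such $\infty$-normalized canonical maps'' to force $F=\id$. But that rigidity is exactly the uniqueness assertion for lemniscatic domains that you are proving; as written the step is circular. One honest way to close it is to compare Green's functions: $g_{L_1}=g_{L_2}\circ F$ gives $\sum_j m_j\log\abs{w-a_j^{(1)}}=\sum_j m_j\log\abs{F(w)-a_j^{(2)}}$, and expanding both sides at $\infty$ using $F(w)=w+\cO(1/w)$ forces the multisets $\{(a_j^{(1)},m_j)\}$ and $\{(a_j^{(2)},m_j)\}$ to coincide and then $F=\id$; but this computation has to be written out, not invoked.
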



The compact set $L$ in~\eqref{eqn:lemniscatic_domain} consists of $\ell$ 
disjoint compact components $L_1, \ldots, L_\ell$, with $a_j \in L_j$ for $j = 
1, \ldots, \ell$.
The components $L_1, \ldots, L_\ell$ are labeled such that a Jordan curve 
surrounding $E_j$ is mapped by $\Phi$ onto a Jordan curve surrounding $L_j$.
The \emph{centers} $a_1, \ldots, a_\ell$ and the \emph{exponents} $m_1, \ldots, 
m_\ell$ in Theorem~\ref{thm:walsh_map} are uniquely determined.
The domain $\comp{L}$ is the exterior of the generalized lemniscate 
$\{ w \in \C : \abs{U(w)} = \capacity(E) \}$ and is usually called a 
\emph{lemniscatic domain}; see~\cite[p.~106]{Grunsky1978}.
Here, $\capacity(E)$ denotes the logarithmic capacity of $E$.

Walsh's conformal map onto a lemniscatic domain is a canonical generalization 
of the Riemann map.
Indeed, if $\ell = 1$ in Theorem~\ref{thm:walsh_map}, i.e., if $E$ is simply 
connected, the exterior Riemann map $\cR_E : \comp{E} \to 
\comp{\overline{\bD}}$, uniquely determined by the normalization
$\cR_E(z) = d_1 z + d_0 + \cO(1/z)$ at infinity with $d_1 > 0$, satisfies 
$\cR_E(z) = d_1 \Phi(z) + d_0$, which follows from~\cite[Thm.~4]{Walsh1956}; 
see also~\cite[Rem.~1.2]{SchiefermayrSete2022}.
Thus, $\cR_E$ and $\Phi$ are related by a simple linear transformation, and
$L = \{ w \in \C : \abs{w - a_1} \leq \capacity(E) \}$ is a disk, 
where $a_1 = - d_0/d_1$ and $\capacity(E) = 1/d_1$.

The reason why we are interested in Walsh's conformal map is that the 
lemniscatic domain $\comp{L}$ in~\eqref{eqn:lemniscatic_domain} has a very 
simple form and is in particular (the exterior of) a classical lemniscate if 
the exponents $m_1, \ldots, m_\ell$ in~\eqref{eqn:lemniscatic_domain} are 
rational.
In addition, as in the case of the Riemann map where the Green's function for 
the complement of the unit disk is simply $\log \abs{w}$, also the Green's 
function for the complement of $L$ in~\eqref{eqn:lemniscatic_domain} has the 
simple form
\begin{equation}
g_L(w) = \log \abs{U(w)} - \log(\capacity(E))
= \sum_{j=1}^\ell m_j \log \abs{w - a_j} - \log(\capacity(E))
\end{equation}
and $g_E(z) = g_L(\Phi(z))$ holds with $\Phi$ from Theorem~\ref{thm:walsh_map}.
Moreover, Walsh's conformal map allows the construction of Faber--Walsh 
polynomials on sets $E$ with several components as in 
Theorem~\ref{thm:walsh_map}, generalizing the well-known Faber polynomials and 
the classical Chebyshev polynomials of the first kind; see Walsh's original 
article~\cite{Walsh1958}, the book of Suetin~\cite{Suetin1998}, and the 
article~\cite{SeteLiesen2017}.

After Walsh's seminal paper~\cite{Walsh1956}, further existence proofs 
of Theorem~\ref{thm:walsh_map} were published by Grunsky~\cite{Grunsky1957a, 
Grunsky1957b,Grunsky1978}, Jenkins~\cite{Jenkins1958}, and 
Landau~\cite{Landau1961}.
The latter contains an iteration for computing Walsh's map,
but it requires knowledge of the harmonic measure of the boundary.  
None of these papers contain any explicit example, which
might be the reason that Walsh's map has not been widely used so far.
However, in Walsh’s Selected Papers \cite[pp.~374--377]{Walsh2000}, Gaier 
recognizes Walsh's conformal map onto lemniscatic domains as one of Walsh's 
major contributions.
The first explicit examples of Walsh's map were derived 
in~\cite{SeteLiesen2016} and applied in~\cite{SeteLiesen2017} for polynomial 
approximation on disconnected compact sets.
In~\cite{NasserLiesenSete2016}, Nasser, Liesen and the second author obtained a 
numerical method for computing Walsh's conformal map for sets $E$ bounded by 
smooth Jordan curves.  The method relies on solving a boundary integral 
equation (BIE) with the generalized Neumann kernel.  This numerical algorithm also 
yields a method for the numerical computation of the logarithmic capacity of 
compact sets~\cite{LiesenSeteNasser2017}.
In~\cite{SchiefermayrSete2022}, we obtained a characterization of the exponents 
$m_1, \ldots, m_\ell$ in terms of Green's function 
and derived explicit formulas and examples of conformal maps onto lemniscatic 
domains for polynomial pre-images $E$.

One main objective is the actual \emph{computation} of the lemniscatic domain
$L$, that is, the computation of the exponents~$m_j$ and the centers~$a_j$,
and of the conformal map~$\Phi$.
In this paper, we consider this question for sets~$E$ that are polynomial
pre-images of $\cc{-1, 1}$, i.e., $E = P^{-1}(\cc{-1, 1})$, where $P$ is an
algebraic polynomial of degree $n$.  The set $P^{-1}(\cc{-1, 1})$ consists
of $\ell$ components, $1 \leq \ell \leq n$, where each component consists 
of a certain number of analytic Jordan arcs~\cite{Schiefermayr2012}.
In particular, the components $E_j$ of $P^{-1}(\cc{-1, 1})$ are not bounded
by Jordan curves and thus the BIE method for computing $\Phi$ and $L$
from~\cite{NasserLiesenSete2016} does not apply here.

This paper is the second of a series of papers.
In the first part~\cite{SchiefermayrSete2022},
we characterized the exponents as $m_j = n_j/n$, where $n_j$ is the
number of zeros of the polynomial $P$
in the component $E_j$ and $n$ is the degree of the polynomial $P$.
Moreover, we derived some general characterizations for the centers $a_1, 
\ldots, a_\ell$ and considered the cases $\ell = 1$ and $\ell = 2$ in 
more detail.

Building on these results, the main contributions in this paper are the following.

(1) In Section~\ref{sect:two_components}, we consider sets $E$ consisting
of $\ell = 2$ real intervals, or, more 
generally, of two components that are symmetric with respect to the real
line.
For these sets, we have explicit formulas for the centers $a_1, a_2$ (Theorem~\ref{thm:aj_for_two_components}).
If, in addition, the set~$E$ is symmetric with respect to the origin then
the corresponding Walsh map $\Phi$ can be given explicitly; see
Theorem~\ref{thm:double_symmetry} and the following two examples.

(2) In Section~\ref{sect:three_components}, we consider sets $E$ consisting of
$\ell = 3$ real intervals.  In this case, the centers $a_1, a_2, a_3$
are the solution of a nonlinear system of three equations,
which can be solved numerically.
This is illustrated in Example~\ref{ex:three_intervals} considering a
two-parameter family of three intervals.
If, in addition, the set~$E$ is symmetric with respect to the origin then
$a_1, a_2, a_3$ can be given by an explicit formula.
In either case, the conformal map $\Phi$ can be computed numerically by 
solving a polynomial equation and using the mapping properties of $\Phi$ in Theorem~\ref{thm:mapping_properties_Phi}.

(3) In Section~\ref{sect:algorithm},
we propose an iterative method for the computation of $a_1, \ldots, 
a_\ell$ for sets $E$ consisting of an 
arbitrary number $\ell \geq 1$ of real intervals, or, more generally, of $\ell$ 
components symmetric with respect to the real line.
The key idea is that $a_1, \ldots, a_\ell$ are the zeros of a certain 
polynomial with prescribed critical values but unknown critical points, with an 
additional constraint on $a_1, \ldots, a_\ell$.
Once the centers $a_j$ of $L$ are computed, the conformal map $\Phi$ can be 
evaluated numerically.
The iterative method for the computation of $a_1, \ldots, a_\ell$ works for an 
arbitrary number of intervals.
In all our numerical examples, the method converges in very few iteration 
steps (at most $7$ steps for up to $20$ intervals) and returns highly 
accurate approximations of $a_1, \ldots, a_\ell$.
In all examples where $a_1, \ldots, a_\ell$ are known explicitly, the computed 
values have an error of the order of the machine precision.
In those examples, where the exact values of $a_1, \ldots, a_\ell$ are not 
known explicitly, the values obtained with 
our new method agree to order $10^{-14}$ with those obtained by a modification 
of the BIE method from~\cite{NasserLiesenSete2016}.  This suggests that both 
methods are very accurate.

In Section~\ref{sect:background}, we first collect results on Walsh's conformal
map~$\Phi$ and the lemniscatic domain for general compact 
sets~$E$ with certain symmetries.
Second, we recall known important facts on sets~$E$ which are polynomial pre-images.
In Appendix~\ref{sect:appendix}, a rather general result concerning symmetric
sets is proven, which is needed for the proofs in
Sections~\ref{sect:two_components} and~\ref{sect:three_components}.

\section{General Compact Sets and Polynomial Pre-Images}
\label{sect:background}

\subsection{Results for General Compact Sets}

For a set $K \subseteq \widehat{\C}$, we denote as usual
$K^* \coloneq \{ \conj{z} : z \in K \}$ and $-K \coloneq \{ -z : z \in K \}$.
If $K \subseteq \C$ is compact then the Green's function (with pole at 
infinity) of $\comp{K}$ is denoted by $g_K$.
If $E = E_1 \cup \ldots \cup E_\ell$ and $E_j^* = E_j$ for all components of
$E$ then we label the components ``from left to right'':
By~\cite[Lem.~A.2]{SchiefermayrSete2022}, each 
$E_j \cap \R$ is a point or an interval, and we label $E_1, \ldots, E_\ell$
such that $x \in E_j \cap \R$ and $y \in E_{j+1} \cap \R$ implies $x < y$ for 
all $j = 1, \ldots, \ell-1$.
As a first result, let us collect some mapping properties of $\Phi$ if the set $E$ has certain symmetries.  

\begin{theorem} \label{thm:mapping_properties_Phi}
Let the notation be as in Theorem~\ref{thm:walsh_map}.
\begin{enumerate}
\item If $E^* = E$, then $\Phi(z) = \conj{\Phi(\conj{z})}$, $\Phi(\R \setminus 
E) = \R \setminus L$, and $\Phi$ maps the upper (lower) half-plane without $E$ 
bijectively onto the upper (lower) half-plane without $L$.

\item If $E = -E^*$, i.e., $E$ is symmetric with respect to the imaginary axis,
then  $\Phi(z) = -\conj{\Phi(-\conj{z})}$,
$\Phi( \ii \R \setminus E) = \ii \R \setminus L$, and
$\Phi$ maps the left (right) half-plane without $E$ bijectively onto the left 
(right) half-plane without $L$.

\item If $E^* = E$ and $E = -E$, then $\Phi(\R^\pm \setminus E) = \R^\pm 
\setminus L$, $\Phi( \ii \R^\pm \setminus E) = \ii \R^\pm \setminus L$, and
$\Phi(z)$ is in the same quadrant as $z$.

\item Assume that $E_j^* = E_j$ for $j = 1, \ldots, \ell$.
Let $\R \setminus E = I_0 \cup \ldots \cup I_\ell$ with open intervals 
$I_j$ ordered from left to right, and let similarly $\R \setminus L = J_0 \cup 
\ldots \cup J_\ell$ with open intervals $J_j$ ordered from left to right. Then 
$\Phi$ maps $I_j$ onto $J_j$, that is $\Phi(I_j) = J_j$ for $j = 0, \ldots, 
\ell$, and $\Phi$ is strictly increasing on $\R \setminus E$, and 
in particular on each interval $I_j$.

\item \label{it:known_properties_crit_pts}
Assume that $E_j^* = E_j$ for $j = 1, \ldots, \ell$.
Then the Green's function $g_E$ has $\ell-1$ critical points $z_1, \ldots, 
z_{\ell-1} \in \C \setminus E$.  These are simple and satisfy $z_j \in I_j$ 
for $j = 1, \ldots, \ell-1$.  Moreover, $\Phi(z_j) = w_j \in J_j$ are the 
critical points of $g_L$.  In particular, if $z \in I_j$ with 
$z < z_j$ (resp.\ $z > z_j$) then $w = \Phi(z) \in J_j$ with $w < w_j$ 
(resp.\ $w > w_j$), and
\begin{equation}
a_1 < w_1 < a_2 < w_2 < \ldots < w_{\ell-1} < a_\ell.
\end{equation}
\end{enumerate}
\end{theorem}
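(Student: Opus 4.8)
The plan is to exploit the symmetry properties already established in parts (i) and (iv) together with the conformal invariance of the Green's function, namely $g_E(z) = g_L(\Phi(z))$, which holds by the discussion preceding Theorem~\ref{thm:walsh_map}. Since $g_L(w) = \sum_{j=1}^\ell m_j \log\abs{w-a_j} - \log(\capacity(E))$ and the $a_j$ are real and ordered as $a_1 < a_2 < \cdots < a_\ell$ (by part (iv), applied to the degenerate labelling, or directly from $a_j \in L_j$), one computes on $\R \setminus L$ that $g_L'(w) = \sum_{j=1}^\ell m_j/(w-a_j)$, a rational function whose numerator has degree $\ell-1$. On each bounded gap $J_j$ ($j = 1, \ldots, \ell-1$) the function $g_L$ tends to $-\infty$ at both endpoints $a_j, a_{j+1}$ and is smooth and positive in the interior, so it has at least one critical point there; counting zeros of the numerator of $g_L'$ shows there are exactly $\ell-1$ critical points in $\C \setminus L$, all real, one in each $J_j$, and each simple. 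This gives the location and simplicity of the $w_j$, and the chain $a_1 < w_1 < a_2 < \cdots < w_{\ell-1} < a_\ell$ is immediate from $w_j \in J_j = (a_j, a_{j+1})$ (after checking $J_j$ is exactly the interval between consecutive centers, which follows since each $L_k$ with $k \ne j, j+1$ lies outside this interval by the ordering and $a_k \in L_k$).

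Next I would transfer this back to $g_E$. Since $\Phi: \comp{E} \to \comp{L}$ is conformal and, by part (iv), maps $I_j$ bijectively and increasingly onto $J_j$, the critical points of $g_E$ in $\C \setminus E$ are exactly the preimages under $\Phi$ of the critical points of $g_L$: indeed $g_E = g_L \circ \Phi$ and $\Phi' \ne 0$, so $\nabla g_E$ vanishes precisely where $\nabla g_L$ vanishes at the image point. Hence $g_E$ has exactly $\ell-1$ critical points $z_1, \ldots, z_{\ell-1}$, they are simple (as $\Phi$ is biholomorphic, it preserves the order of vanishing), and $z_j = \Phi^{-1}(w_j) \in I_j$ because $\Phi(I_j) = J_j$. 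The final monotonicity statement — if $z \in I_j$ with $z < z_j$ then $w = \Phi(z) < w_j$, and similarly for $z > z_j$ — is then just the strict increase of $\Phi$ on $I_j$ from part (iv).

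The one point requiring a little care is the claim that \emph{all} $\ell-1$ critical points of $g_E$ (equivalently of $g_L$) are real and that none are "lost" to the complex plane; a priori the numerator of $g_L'$ could have complex roots. This is handled by the sign/limit argument above: $g_L$ restricted to the real line has a local maximum in each of the $\ell-1$ bounded complementary intervals $J_j$ (it is real-analytic there, vanishes on $\rand L \cap \R$ approached from inside $J_j$... actually it $\to -\infty$ at the endpoints which lie in $L$, but on $J_j$ it is positive and $\to 0$ only on the boundary $\rand J_j \cap \rand L$ — one must note $\rand J_j$ consists of the two boundary points of $L_j, L_{j+1}$ nearest the gap, not of $a_j, a_{j+1}$; so $g_L \to 0^+$ at both ends of $J_j$ and is positive inside, forcing an interior max). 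Thus each $J_j$ contains at least one critical point, giving $\ell-1$ real ones, and since the numerator of $g_L'$ has degree exactly $\ell-1$ there are no others. This counting argument is the crux; everything else is transfer via $\Phi$ and its established mapping properties. I would also remark that the simplicity of the $z_j$ can alternatively be read off from the fact that $g_E$, being the Green's function of a finite-gap-type set, has $g_E'$ with a known zero structure, but the conformal-transfer argument is cleaner given what has been set up.
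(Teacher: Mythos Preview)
Your proposal addresses only part~(v), treating (i) and (iv) as already established. The paper's own proof is almost entirely by citation: for (i)--(iii) it quotes the symmetry identity $\Phi(\conj{z}) = \conj{\Phi(z)}$ (resp.\ $\Phi(-\conj{z}) = -\conj{\Phi(z)}$) from \cite[Lem.~2.2]{SeteLiesen2016} and then uses the normalization $\Phi(z) = z + \cO(1/z)$ at infinity to decide which half-plane maps to which; parts (iv) and (v) are deferred wholesale to \cite[Thm.~2.8]{SchiefermayrSete2022} and its proof. So for (v) you actually supply \emph{more} argument than the paper does in-line.

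Your argument for (v) is correct. The logarithmic derivative $\sum_j m_j/(w-a_j)$ clears to a polynomial of degree exactly $\ell-1$ (leading coefficient $\sum_j m_j = 1$); the boundary behaviour of $g_L$ on each bounded real gap $J_j$ --- positive in the interior and vanishing at the two points of $\partial L$ that bound $J_j$ --- forces at least one interior critical point per gap, and the degree count then shows these exhaust all critical points of $g_L$ in $\C \setminus L$ and that each is simple. The interlacing $a_j < w_j < a_{j+1}$ follows since $w_j \in J_j$ and $a_j \in L_j \cap \R$, $a_{j+1} \in L_{j+1} \cap \R$ with the $L_k$ ordered left to right. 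Transfer to $g_E$ via $g_E = g_L \circ \Phi$ and $\Phi' \neq 0$ then uses (iv) exactly as you indicate. Your mid-stream self-correction --- the endpoints of $J_j$ lie on $\partial L$, not at $a_j$ and $a_{j+1}$, so $g_L \to 0^+$ there rather than $-\infty$ --- is essential, and you land on the right version. Just be aware that (iv), on which your proof of (v) leans for $\Phi(I_j) = J_j$ and the monotonicity, is itself nontrivial and you have not supplied a proof; neither does the paper in-line, so you are in the same position as the authors on that point.
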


\begin{proof}
(i)
Since $E^* = E$, we have $\Phi(z) = \conj{\Phi(\conj{z})}$ for $z \in \C 
\setminus E$ by~\cite[Lem.~2.2]{SeteLiesen2016}, hence $\Phi(z) \in \R$ if and 
only if $z \in \R$.  Then the normalization $\Phi(z) = z + \cO(1/z)$ at 
infinity implies that
$\im(\Phi(z)) > 0$ for $z \in \C \setminus E$ with $\im(z) > 0$, and
$\im(\Phi(z)) < 0$ for $z \in \C \setminus E$ with $\im(z) < 0$.
(ii) Since $E = -E^*$, we have $\Phi(z) = -\conj{\Phi(-\conj{z})}$ for
$z \in \C \setminus E$ by~\cite[Lem.~2.2]{SeteLiesen2016}, and (ii) now follows 
similarly to (i).
(iii) follows from (i) and (ii).
The assertions (iv) and (v) follow from~\cite[Thm.~2.8]{SchiefermayrSete2022} 
and its proof.
\end{proof}

In the next theorem, we consider sets $E$ that are symmetric with respect to
the origin and investigate the effect on the exponents $m_j$ and the centers
$a_j$ of the lemniscatic domain.

\begin{theorem} \label{thm:aj_symm_origin}
Let $E = \cup_{j=1}^\ell E_j$ be as in Theorem~\ref{thm:walsh_map} with centers $a_1, \ldots, a_\ell$ and exponents $m_1, \ldots, m_\ell$ of $L$.
If $E = -E$ and if $j_1, j_2$ 
are such that $E_{j_2} = - E_{j_1}$, then $m_{j_2} = m_{j_1}$ and $a_{j_2} = 
-a_{j_1}$.  In particular, if $E_j = - E_j$, then $a_j = 0$.
Moreover, the set of critical points of $g_E$ is symmetric with respect to zero.
\end{theorem}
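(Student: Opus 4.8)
The plan is to exploit the uniqueness statement in Walsh's theorem (Theorem~\ref{thm:walsh_map}) together with the reflection $z \mapsto -z$. First I would set $\widetilde{\Phi}(z) \coloneq -\Phi(-z)$ for $z \in \comp{E}$. Since $E = -E$, the map $-z \mapsto \Phi(-z)$ is a conformal map from $\comp{E}$ to $\comp{L}$, and composing with $w \mapsto -w$ gives a conformal map $\widetilde\Phi : \comp{E} \to \comp{(-L)}$. At infinity, $\Phi(z) = z + \cO(1/z)$ gives $\Phi(-z) = -z + \cO(1/z)$, hence $\widetilde\Phi(z) = z + \cO(1/z)$. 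Now $-L = \{ w : \abs{U(-w)} \leq \capacity(E) \}$, and $U(-w) = \prod_{j=1}^\ell (-w - a_j)^{m_j} = (-1)^{\sum m_j} \prod_{j=1}^\ell (w - (-a_j))^{m_j}$; since the modulus is taken, $-L = \{ w : \abs{\prod_j (w - (-a_j))^{m_j}} \leq \capacity(E) \}$ is again a lemniscatic set with centers $-a_j$ and exponents $m_j$. Because $\capacity(-E) = \capacity(E)$ (translation/rotation invariance of logarithmic capacity, and $-E = E$ here anyway), the pair $(-L, \widetilde\Phi)$ satisfies all the normalizations in Theorem~\ref{thm:walsh_map} for the set $E$.

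By the uniqueness of the lemniscatic set and of the conformal map in Theorem~\ref{thm:walsh_map}, we conclude $-L = L$ and $\widetilde\Phi = \Phi$, i.e.\ $\Phi(z) = -\Phi(-z)$ for all $z \in \comp{E}$. The equality $-L = L$ means the multiset of centers with their exponents is invariant under $a_j \mapsto -a_j$: the centers $\{a_1, \ldots, a_\ell\}$ are permuted by negation, and a center is sent to another center carrying the same exponent. Combining this with the labeling convention (components, hence the $L_j$, are matched via $\Phi$): if $E_{j_2} = -E_{j_1}$, then applying $\Phi(z) = -\Phi(-z)$ shows that a Jordan curve surrounding $L_{j_1}$ is mapped under $w \mapsto -w$ to one surrounding $L_{j_2}$, so $L_{j_2} = -L_{j_1}$, and in particular the center $a_{j_2}$ of $L_{j_2}$ equals $-a_{j_1}$ and $m_{j_2} = m_{j_1}$. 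If $E_j = -E_j$, then $j_1 = j_2 = j$ gives $a_j = -a_j$, hence $a_j = 0$.

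Finally, for the critical points of $g_E$: recall from the introduction that $g_E(z) = g_L(\Phi(z))$ and $g_L(w) = \sum_{j} m_j \log\abs{w - a_j} - \log\capacity(E)$. Since $-L = L$ with the centers permuted by negation and exponents preserved, $g_L(-w) = g_L(w)$ for all $w \in \comp{L}$. Using $\Phi(-z) = -\Phi(z)$, we get $g_E(-z) = g_L(\Phi(-z)) = g_L(-\Phi(z)) = g_L(\Phi(z)) = g_E(z)$, so $g_E$ is even. Hence its gradient (equivalently the critical points, characterized as the points in $\C \setminus E$ where $\nabla g_E = 0$) transforms under $z \mapsto -z$ accordingly, and the set of critical points is symmetric with respect to the origin.

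I expect the main obstacle to be the bookkeeping in the second paragraph: one must check carefully that the labeling convention for the components $L_j$ (matched to $E_j$ via Jordan curves under $\Phi$) is compatible with the symmetry, so that $E_{j_2} = -E_{j_1}$ forces $L_{j_2} = -L_{j_1}$ rather than merely some permutation of the centers. This uses the already-established identity $\Phi(z) = -\Phi(-z)$ to transport a surrounding Jordan curve of $E_{j_1}$ to one of $E_{j_2}$ and conclude at the level of the corresponding components $L_{j_1}, L_{j_2}$. Everything else reduces to the uniqueness in Theorem~\ref{thm:walsh_map} and elementary invariance properties of $\capacity$ and of lemniscatic sets under $w \mapsto -w$.
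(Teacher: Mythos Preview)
Your argument is correct, but it differs from the paper's proof in the key step. Both you and the paper first obtain the identity $\Phi(-z) = -\Phi(z)$ (the paper cites it from~\cite[Lem.~2.2]{SeteLiesen2016} or~\cite[Lem.~2.6]{SchiefermayrSete2022}, while you essentially rederive it via the uniqueness part of Theorem~\ref{thm:walsh_map}). From there the paths diverge: the paper invokes the contour-integral characterizations
\[
m_j = \frac{1}{2\pi\ii}\int_\gamma 2(\partial_z g_E)(z)\,\dd z, \qquad
m_j a_j = \frac{1}{2\pi\ii}\int_\gamma \Phi(z)\,2(\partial_z g_E)(z)\,\dd z
\]
from~\cite[Thm.~2.3]{SchiefermayrSete2022} and performs the substitution $z \mapsto -z$ to pass from a curve around $E_{j_1}$ to one around $E_{j_2} = -E_{j_1}$. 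You instead deduce $-L = L$ directly from uniqueness, then use the labeling convention tying $E_j$ to $L_j$ via $\Phi$ to conclude $L_{j_2} = -L_{j_1}$ and read off $a_{j_2} = -a_{j_1}$, $m_{j_2} = m_{j_1}$ from the uniqueness of the center/exponent data of a lemniscatic set. Your route is more self-contained (it avoids the integral formulas from the companion paper), at the price of the component-tracking you flag yourself; the paper's route is quicker once those formulas are available. For the symmetry of the critical points both proofs show $g_E(-z) = g_E(z)$: the paper argues directly via uniqueness of the Green's function, while you pass through $g_L$ and the already-established oddness of $\Phi$; either way is fine.
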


\begin{proof}
If $E = -E$ then $\Phi(-z) = -\Phi(z)$ by~\cite[Lem.~2.2]{SeteLiesen2016} 
or~\cite[Lem.~2.6]{SchiefermayrSete2022}, and $g_E(-z) = g_E(z)$ since 
$g_E(z)$ and $g_E(-z)$ both satisfy the properties of the Green's function,
hence, in particular, $(\partial_z g_E)(-z) = - (\partial_z g_E)(z)$.
This shows that the set of critical points of $g_E$ is symmetric with respect
to zero.
Next, let $\gamma$ be a smooth closed curve in $\C \setminus E$ with
$\wind(\gamma; z) = \delta_{j_1,k}$ for $z \in E_k$,
then $-\gamma$ surrounds the component $-E_{j_1} = E_{j_2}$.
We use~\cite[Thm.~2.3]{SchiefermayrSete2022} and the substitution $z = -u$ to 
obtain
\begin{align*}
m_{j_2}
= \frac{1}{2 \pi \ii} \int_{-\gamma} 2 (\partial_z g_E)(z) \, \dd z
= -\frac{1}{2 \pi \ii} \int_\gamma 2 (\partial_z g_E)(-u) \, \dd u
= \frac{1}{2 \pi \ii} \int_\gamma 2 (\partial_z g_E)(u) \, \dd u
= m_{j_1}.
\end{align*}
Similarly,
\begin{align*}
m_{j_2} a_{j_2}
&= \frac{1}{2 \pi \ii} \int_{-\gamma} \Phi(z) 2 (\partial_z g_E)(z) \, \dd z
= -\frac{1}{2 \pi \ii} \int_\gamma \Phi(-u) 2 (\partial_z g_E)(-u) \, \dd u \\
&= - \frac{1}{2 \pi \ii} \int_\gamma \Phi(u) 2 (\partial_z g_E)(u) \, \dd u
= - m_{j_1} a_{j_1},
\end{align*}
hence $a_{j_2} = - a_{j_1}$.
\end{proof}

\subsection{Results for Polynomial Pre-Images of an Interval}
\label{sect:poly_preimages_of_intervals}


We consider polynomial pre-images of $\cc{-1, 1}$, that is,
\begin{equation} \label{eqn:poly_preimage}
E \coloneq P^{-1}(\cc{-1, 1}) = \bigcup_{j=1}^\ell E_j,
\end{equation}
where $P$ is a polynomial of degree $n \geq 1$ with complex coefficients of the form
\begin{equation} \label{eqn:P}
P(z) = \sum_{j=0}^n p_j z^j
= p_n z^n + p_{n-1} z^{n-1} + \ldots + p_0
\quad \text{with } p_n \neq 0.
\end{equation}
Each component $E_j$ consists of a certain number of analytic Jordan arcs, 
see~\cite{Schiefermayr2012} for details, and $\comp{E}$ is connected; 
see~\cite[Thm.~A.4]{SchiefermayrSete2022} 
or~\cite[Lem.~1\,(viii)]{Schiefermayr2012}.  All zeros of $P$ 
are in $E$ and each $E_j$ contains at least one zero of $P$, compare 
\cite[Thm.~3.1]{SchiefermayrSete2022}.
By~\cite[Proof of Thm.~5.2.5]{Ransford1995}, the Green's function 
of $\comp{E}$ is given by
\begin{equation} \label{eqn:gE}
g_E(z)
= \frac{1}{n} \log \bigl\lvert P(z) + \sqrt{P(z)^2-1} \bigr\rvert, \quad z \in \C \setminus E,
\end{equation}
where $\sqrt{P(z)^2 - 1}$ has a branch cut along $E$ and behaves as $P(z)$ 
at $\infty$; compare also the beginning of~\cite[Sect.~3]{SchiefermayrSete2022}.
Note that the critical points of $g_E$ are the critical points of~$P$ in
$\C \setminus E$.
Again by~\cite[Thm.~5.2.5]{Ransford1995}, the logarithmic capacity of $E$ is
\begin{equation} \label{eqn:capacity}
\capacity(E) = \frac{1}{\sqrt[n]{2 \abs{p_n}}}.
\end{equation}


In the next theorem, we summarize results from~\cite{SchiefermayrSete2022}
that are needed in the sequel.

\begin{theorem} \label{thm:known_properties}
Let $E$ be a polynomial pre-image as in~\eqref{eqn:poly_preimage}.
\begin{enumerate}
\item The set $E$ has $\ell$ connected components
if and only if $P$ has exactly $\ell-1$ critical points $z_1, \ldots, 
z_{\ell-1}$ (counting multiplicities) for which $P(z_k) \notin \cc{-1, 1}$ for 
$k = 1, \ldots, \ell-1$.
\item The exponents $m_j$ in Theorem~\ref{thm:walsh_map} are given by
\begin{equation}
m_j = \frac{n_j}{n}, \quad j = 1, \ldots, \ell,
\end{equation}
where $n_j \geq 1$ is the number of zeros of $P$ in $E_j$.
\item \label{it:relation_between_maps}
For $z \in \comp{E}$, we have
\begin{equation} \label{eqn:relation_between_maps}
Q(\Phi(z)) = P(z) + \sqrt{P(z)^2 - 1}
\end{equation}
with the polynomial
\begin{equation} \label{eqn:Q}
Q(w) \coloneq \frac{\ee^{\ii \arg(p_n)}}{\capacity(E)^n} U(w)^n
= 2 p_n \prod_{j=1}^\ell (w-a_j)^{n_j}.
\end{equation}
Moreover,
\begin{equation}
L = \{ w \in \C : \abs{Q(w)} \leq 1 \},
\end{equation}
compare~\eqref{eqn:lemniscatic_domain}.
\item A point $z \in \C \setminus E$ is a critical point of $P$ if and 
only if $w = \Phi(z)$ is a critical point of $Q$ in $\C \setminus L$, and
\begin{equation} \label{eqn:relation_critvals}
Q(w) = P(z) + \sqrt{P(z)^2 - 1}.
\end{equation}
\item The polynomial $Q$ has $\ell-1$ critical points in $\C \setminus L$
and these are the zeros of
\begin{equation} \label{eqn:eqn_for_crit_pts_of_Q}
\sum_{k=1}^\ell n_k \prod_{\substack{j=1 \\ j \neq k}}^\ell (w - a_j).
\end{equation}
\end{enumerate}
\end{theorem}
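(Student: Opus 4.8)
The plan is to prove each assertion of Theorem~\ref{thm:known_properties} directly by citing the corresponding result of the first part~\cite{SchiefermayrSete2022} and filling in the short connecting arguments. For (i), this is essentially the classical fact that the number of components of $P^{-1}([-1,1])$ is governed by the critical values of $P$ lying outside $[-1,1]$; I would refer to the characterisation in~\cite{Schiefermayr2012} (and its restatement in the first part) and note that, since $P$ has $n-1$ critical points counting multiplicity and every component contains at least one zero of $P$, the count of ``exterior'' critical values equals $\ell-1$. For (ii), I would simply quote~\cite{SchiefermayrSete2022}, where $m_j = n_j/n$ was established via the representation of the exponents as a contour integral of $2\,\partial_z g_E$ (cf.\ the proof of Theorem~\ref{thm:aj_symm_origin} above), together with~\eqref{eqn:gE} which shows that $n\,g_E = \log\lvert P + \sqrt{P^2-1}\rvert$ has a zero of order $n_j$ of its ``algebraic part'' inside $E_j$.

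The heart of the theorem is part~(iii), and that is where I would spend the most effort. The idea is to combine Walsh's defining relation $g_E(z) = g_L(\Phi(z))$ with the explicit formulas~\eqref{eqn:gE} for $g_E$ and $g_L(w) = \sum_j m_j \log\lvert w - a_j\rvert - \log(\capacity(E))$. Exponentiating $n\,g_E(z) = n\,g_L(\Phi(z))$ gives that both $\lvert P(z) + \sqrt{P(z)^2-1}\rvert$ and $\lvert U(\Phi(z))^n\rvert / \capacity(E)^n$ are equal on $\comp{E}$. To upgrade this equality of moduli to the equality of analytic functions in~\eqref{eqn:relation_between_maps}, I would argue that both sides are analytic and non-vanishing on the simply connected (in $\widehat{\C}$) domain $\comp{E}$, so their quotient is a unimodular analytic function, hence a constant of modulus one; evaluating the asymptotics at $\infty$ using $\Phi(z) = z + \cO(1/z)$, $\sum m_j = 1$, and~\eqref{eqn:capacity} pins the constant down to $\ee^{\ii\arg(p_n)}$, which is exactly the normalisation absorbed into the definition of $Q$ in~\eqref{eqn:Q}. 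The factorisation $Q(w) = 2p_n \prod_j (w-a_j)^{n_j}$ then follows from $U(w)^n = \prod_j (w-a_j)^{n_j m_j \cdot n / n_j}$... more precisely from $m_j = n_j/n$ so that $U(w)^n = \prod_j (w-a_j)^{n_j}$, and $\ee^{\ii\arg(p_n)}/\capacity(E)^n = 2\lvert p_n\rvert \ee^{\ii\arg(p_n)} = 2 p_n$ by~\eqref{eqn:capacity}. Finally $L = \{\lvert Q \rvert \le 1\}$ is just a rescaling of~\eqref{eqn:lemniscatic_domain} since $\lvert Q(w)\rvert = \lvert U(w)\rvert^n / \capacity(E)^n \le 1 \iff \lvert U(w)\rvert \le \capacity(E)$.

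Parts~(iv) and~(v) I would then deduce from~(iii) together with the mapping properties already recorded. For~(iv): differentiating~\eqref{eqn:relation_between_maps} via the chain rule, $Q'(\Phi(z))\Phi'(z)$ equals the derivative of $P(z) + \sqrt{P(z)^2-1}$, which by a direct computation equals $P'(z)\bigl(1 + P(z)/\sqrt{P(z)^2-1}\bigr) = P'(z)\cdot\frac{P(z)+\sqrt{P(z)^2-1}}{\sqrt{P(z)^2-1}}$; since $\Phi$ is conformal on $\comp{E}$ (so $\Phi' \ne 0$) and the factor multiplying $P'(z)$ is non-zero on $\comp{E}$, we get $Q'(\Phi(z)) = 0 \iff P'(z) = 0$, and~\eqref{eqn:relation_critvals} is then just~\eqref{eqn:relation_between_maps} read at such a point. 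For~(v): by~(iv) and Theorem~\ref{thm:known_properties}(i) the critical points of $Q$ in $\comp{L}$ are the images under $\Phi$ of the $\ell-1$ critical points of $P$ in $\comp{E}$, so there are exactly $\ell-1$ of them; and from the factored form $Q(w) = 2p_n\prod_j(w-a_j)^{n_j}$ the logarithmic derivative gives $Q'(w)/Q(w) = \sum_k n_k/(w-a_k)$, so $Q'(w) = 2p_n \bigl(\sum_k n_k \prod_{j\neq k}(w-a_j)\bigr)\big/\,$(nothing)\,---more carefully, $Q'(w) = 2p_n \sum_{k=1}^\ell n_k \prod_{j\neq k}(w-a_j)\cdot\prod_{j}(w-a_j)^{n_j-1}$, whose zeros in $\comp{L}$ (i.e.\ away from the $a_j \in L$) are precisely the zeros of~\eqref{eqn:eqn_for_crit_pts_of_Q}. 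The main obstacle, as noted, is the rigorous ``modulus to function'' step in~(iii): one must be careful that $\comp{E}$ is the right kind of domain (connected, and simply connected on the sphere) for a non-vanishing analytic function determined by its modulus to be constant, and that the branch of $\sqrt{P^2-1}$ is chosen consistently so that the asymptotic matching at $\infty$ is valid; but since all of this is already carried out in the first part~\cite{SchiefermayrSete2022}, the proof here should reduce to assembling those pieces.
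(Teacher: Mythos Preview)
Your proposal is correct and matches the paper's approach: the paper's proof is a pure citation proof, attributing (i)--(v) directly to \cite[Thm.~3.1, Thm.~3.2, Thm.~3.3, Lem.~3.5]{SchiefermayrSete2022} (after recalling that the exterior Riemann map of $[-1,1]$ is $\Ri(z)=z+\sqrt{z^2-1}$), and you do the same while additionally sketching the underlying arguments from the first part. Your sketches for (iii)--(v) (the modulus-to-function step via asymptotics at~$\infty$, the chain-rule computation for~(iv), and the factorisation $Q'(w)=2p_n\prod_j(w-a_j)^{n_j-1}\sum_k n_k\prod_{j\neq k}(w-a_j)$ for~(v)) are accurate reconstructions of how those results are obtained in~\cite{SchiefermayrSete2022}.
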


\begin{proof}
The exterior Riemann map of $\cc{-1, 1}$ is $\Ri : \comp{\cc{-1, 1}} \to \comp{\overline{\bD}}$, $\Ri(z) = z + \sqrt{z^2 - 1}$, 
where $\sqrt{z^2 - 1}$ has a branch cut along $\cc{-1, 1}$ and behaves as $z$ 
at $\infty$.
Then
(i) follows from~\cite[Thm.~3.1]{SchiefermayrSete2022},
(ii) from~\cite[Thm.~3.2]{SchiefermayrSete2022},
(iii) from~\cite[Thm.~3.3]{SchiefermayrSete2022},
(iv) and (v) from~\cite[Lem.~3.5]{SchiefermayrSete2022}.
\end{proof}

Since the exponents $m_j$ are determined in 
Theorem~\ref{thm:known_properties}\,(ii),
let us turn our attention to the determination of the centers $a_j$.
If all components $E_j$ of $E$ are symmetric with respect to the real line,
we know that $a_1, \ldots, a_\ell$ are the solution of a certain nonlinear
system of equations.

\begin{theorem} \label{thm:ell_components_real_symmetry}
Let $P$ be a polynomial of degree $n \geq 1$ such that
$E = P^{-1}(\cc{-1, 1}) = E_1 \cup \ldots \cup E_\ell$ with
$E_j^* = E_j$ for $j = 1, \ldots, \ell$.
Then $a_1, \ldots, a_\ell$ are real with $a_1 < \ldots < a_\ell$, the 
polynomial $Q(w)$ in~\eqref{eqn:Q} has exactly $\ell-1$ critical points $w_1, 
\ldots, w_{\ell-1} \in \C \setminus L$,
and these are real, simple with $a_1 < w_1 < a_2 < w_2 < a_3 < \ldots < a_\ell$.
Similarly, $P$ has $\ell-1$ critical points $z_1, \ldots, z_{\ell-1}$
in $\C \setminus E$, these are real, simple and satisfy
\begin{equation}
\max(E_j \cap \R) < z_j < \min(E_{j+1} \cap \R), \quad j = 1, \ldots, \ell-1.
\end{equation}
Moreover, $a_1, \ldots, a_\ell$ satisfy the system of equations
\begin{align}
Q(w_j) &= P(z_j) + \sqrt{P(z_j)^2 - 1}, \qquad j = 1, \ldots, \ell-1, 
\label{eqn:ell_relation_crit_pts} \\
\sum_{j=1}^\ell n_j a_j &= - \frac{p_{n-1}}{p_n}. \label{eqn:ell_sum_nj_aj}
\end{align}
\end{theorem}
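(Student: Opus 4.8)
The plan is to assemble the statement from the facts already collected in Theorems~\ref{thm:mapping_properties_Phi} and~\ref{thm:known_properties}, together with the normalization of $\Phi$ at infinity. First I would observe that, under the hypothesis $E_j^* = E_j$ for all $j$, part~(v) of Theorem~\ref{thm:mapping_properties_Phi} already gives that $g_E$ has exactly $\ell - 1$ critical points $z_1, \ldots, z_{\ell-1}$, that they are simple, real, and interlace the components, with $z_j$ lying in the gap interval $I_j$ between $E_j$ and $E_{j+1}$; hence $\max(E_j \cap \R) < z_j < \min(E_{j+1} \cap \R)$. Since the critical points of $g_E$ coincide with the critical points of $P$ in $\C \setminus E$ (as noted after~\eqref{eqn:gE}), the same statement holds verbatim for $P$. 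Dually, part~(v) also yields that $g_L$ has critical points $w_j = \Phi(z_j) \in J_j$, real and simple, with $a_1 < w_1 < a_2 < \ldots < a_\ell$; and by Theorem~\ref{thm:known_properties}\,(iv)--(v) these $w_j$ are precisely the critical points of $Q$ in $\C \setminus L$, of which there are exactly $\ell - 1$ (the zeros of~\eqref{eqn:eqn_for_crit_pts_of_Q}). That the $a_j$ themselves are real with $a_1 < \ldots < a_\ell$ follows from $a_j \in L_j$, $L_j^* = L_j$ (a consequence of $E^* = E$ via Theorem~\ref{thm:mapping_properties_Phi}\,(i), which forces $L^* = L$), and the interlacing just recorded; alternatively one invokes Theorem~\ref{thm:aj_symm_origin}-style reasoning, but here the cleaner route is simply: $L_j \cap \R$ is a point or interval, $a_j$ is the unique center in $L_j$, and the interlacing $a_1 < w_1 < a_2 < \cdots$ from part~(v) already places them on the line in the right order.

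Next I would derive the two displayed equations. Equation~\eqref{eqn:ell_relation_crit_pts} is immediate from Theorem~\ref{thm:known_properties}\,(iv): applying~\eqref{eqn:relation_critvals} at the critical point pair $(z_j, w_j)$ gives $Q(w_j) = P(z_j) + \sqrt{P(z_j)^2 - 1}$ for each $j = 1, \ldots, \ell - 1$. For equation~\eqref{eqn:ell_sum_nj_aj} I would compare the two leading coefficients of $Q(w) = 2 p_n \prod_{j=1}^\ell (w - a_j)^{n_j}$ from~\eqref{eqn:Q}. Since $\sum_j n_j = n$, the top-degree term is $2 p_n w^n$ and the next coefficient is $2 p_n \cdot \bigl(-\sum_{j=1}^\ell n_j a_j\bigr)$. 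On the other hand, $Q(\Phi(z)) = P(z) + \sqrt{P(z)^2 - 1}$ by~\eqref{eqn:relation_between_maps}; expanding at $\infty$ using $\Phi(z) = z + \cO(1/z)$ and $P(z) + \sqrt{P(z)^2 - 1} = 2 P(z) + \cO(1/z) = 2 p_n z^n + 2 p_{n-1} z^{n-1} + \cO(z^{n-2})$, and substituting $w = \Phi(z)$ so that $w^n = z^n + \cO(z^{n-2})$ and $w^{n-1} = z^{n-1} + \cO(z^{n-3})$ (the $\cO(1/z)$ correction to $\Phi$ contributes only at order $z^{n-2}$), we match the $z^{n-1}$ coefficients: $2 p_n \bigl(-\sum_j n_j a_j\bigr) = 2 p_{n-1}$, i.e.\ $\sum_{j=1}^\ell n_j a_j = -p_{n-1}/p_n$. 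This is exactly the same coefficient-comparison argument used in the $\ell = 1, 2$ cases of the first part of the series, now carried out in general.

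The routine parts are the coefficient bookkeeping in the last step and the citation-chasing in the first step; the only place that needs genuine care is making sure the $\cO(1/z)$ term in $\Phi(z) = z + \cO(1/z)$ really does not pollute the $z^{n-1}$ coefficient of $Q(\Phi(z))$ — but writing $\Phi(z) = z + b_1/z + \cO(1/z^2)$ and noting $\Phi(z)^k = z^k + k b_1 z^{k-2} + \cO(z^{k-3})$ shows the perturbation enters two orders down, so for $k = n$ and $k = n-1$ it is harmless at order $z^{n-1}$. The main (mild) obstacle is therefore purely organizational: verifying that the counts of critical points match up and that the ones produced by Theorem~\ref{thm:mapping_properties_Phi}\,(v) for $g_L$ are identical to the critical points of $Q$ supplied by Theorem~\ref{thm:known_properties}\,(iv)--(v), so that~\eqref{eqn:ell_relation_crit_pts} is a genuine system in the unknowns $a_1, \ldots, a_\ell$ (with $w_1, \ldots, w_{\ell-1}$ themselves determined by the $a_j$ through~\eqref{eqn:eqn_for_crit_pts_of_Q}) and not an under- or over-determined one. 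Since $Q$ has $\ell - 1$ critical points and $g_L$ has $\ell - 1$ critical points, both lying in $\C \setminus L$, and $\Phi$ is a bijection there, the two sets coincide, closing the argument.
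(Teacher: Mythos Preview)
Your proposal is correct and follows essentially the same route as the paper: the first part and~\eqref{eqn:ell_relation_crit_pts} are obtained exactly as in the paper from Theorem~\ref{thm:mapping_properties_Phi}\,(v) and Theorem~\ref{thm:known_properties}\,(iv). The only difference is that for~\eqref{eqn:ell_sum_nj_aj} the paper simply cites \cite[Thm.~3.3]{SchiefermayrSete2022}, whereas you rederive it by matching the $z^{n-1}$ coefficients in $Q(\Phi(z)) = P(z) + \sqrt{P(z)^2-1}$ at infinity; this is precisely the argument behind that cited result, so you are not doing anything genuinely different, just making the proof self-contained. One tiny cosmetic point: the intermediate bound ``$2P(z) + \cO(1/z)$'' should read $2P(z) + \cO(z^{-n})$, but since this is in any case absorbed into the $\cO(z^{n-2})$ you actually use, the argument is unaffected.
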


\begin{proof}
The first part follows immediately from 
Theorem~\ref{thm:mapping_properties_Phi}\,(v).  In particular, $\Phi(z_j) = 
w_j$ for $j = 1, \ldots, \ell-1$.  Then, \eqref{eqn:ell_relation_crit_pts} 
follows from Theorem~\ref{thm:known_properties}\,(iv), 
and~\eqref{eqn:ell_sum_nj_aj} follows 
from~\cite[Thm.~3.3]{SchiefermayrSete2022}.
\end{proof}

Equations~\eqref{eqn:ell_relation_crit_pts}--\eqref{eqn:ell_sum_nj_aj}
are a nonlinear system of equations for $a_1, \ldots, a_\ell$,
provided that the critical points $w_1, \ldots, w_{\ell-1}$ of $Q$ outside $L$
can be expressed explicitly in terms of $a_1, \ldots, a_\ell$;
see~\eqref{eqn:eqn_for_crit_pts_of_Q}.
This is possible for $\ell = 2$ and $\ell = 3$, and we consider
these cases in Sections~\ref{sect:two_components} 
and~\ref{sect:three_components}.
For larger $\ell$, this is no longer practical. 
In Section~\ref{sect:algorithm}, we develop a numerical method to compute
$a_1, \ldots, a_\ell$ for arbitrary $\ell \geq 2$.

Before we start this investigation, let us conclude this section with a theorem
which gives a necessary and sufficient condition for a
union of real intervals to be a polynomial pre-image of $\cc{-1, 1}$,
and which follows from~\cite[Lem.~2.1]{PeherstorferSchiefermayr1999}; see
also~\cite[Lem.~1]{Schiefermayr2007}.

\begin{theorem} \label{thm:E_ell_intervals}
The set
\begin{equation} \label{eqn:E_ell_intervals}
E \coloneq \cc{c_1, c_2} \cup \cc{c_3, c_4} \cup \cc{c_5, c_6} \cup
\ldots \cup \cc{c_{2n-1}, c_{2n}}
\end{equation}
with
\begin{equation}
c_1 < c_2 \leq c_3 < c_4 \leq c_5 < c_6 \leq \ldots \leq c_{2n-1} < c_{2n}
\end{equation}
is the pre-image of $\cc{-1, 1}$ under a polynomial $P$ of degree $n$, that is, 
$E = P^{-1}(\cc{-1, 1})$, if and only if
$c_1, \ldots, c_{2n}$ satisfy the following system of equations:
\begin{equation} \label{eqn:system_bk}
c_1^k - (c_2^k + c_3^k) + (c_4^k + c_5^k) - (c_6^k + c_7^k) \pm \ldots
+ (-1)^{n-1} (c_{2n-2}^k + c_{2n-1}^k) + (-1)^n c_{2n}^k = 0
\end{equation}
for $k = 1, 2, \ldots, n-1$.
\end{theorem}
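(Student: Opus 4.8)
The two implications both rest on the identity $P(z)^2 - 1 = p_n^2 \prod_{k=1}^{2n}(z-c_k)$ and on the elementary observation that two monic polynomials of degree $n$ differ only by a constant precisely when their power sums of orders $1, \dots, n-1$ agree (Newton's identities); the latter, for the two blocks of $c_k$'s below, is exactly \eqref{eqn:system_bk}. Concretely, I would split $\{1, \dots, 2n\}$ into $\mathcal{A} := \{1, 4, 5, 8, 9, 12, 13, \dots\}$ (i.e.\ $1$, then the pairs $\{4i,4i+1\}$, $i\ge 1$) and its complement $\mathcal{B} := \{2,3,6,7,10,11,\dots\}$; one checks that each of $\mathcal A$ and $\mathcal B$ has exactly $n$ elements, that the signs attached to the $c_k$ in \eqref{eqn:system_bk} are $+$ for $k\in\mathcal A$ and $-$ for $k\in\mathcal B$, and that consecutive endpoints $c_{2j-1},c_{2j}$ always land in different blocks while a ``gap pair'' $c_{2j},c_{2j+1}$ always lands in the same block.

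For the direction ``$E=P^{-1}(\cc{-1,1})$ $\Rightarrow$ \eqref{eqn:system_bk}'': since $E\subseteq\R$, every solution of $P(z)=\pm 1$ is real, so the degree-$2n$ polynomial $P^2-1$ has $2n$ real zeros; ordering them and using that for real $x$ one has $|P(x)|\le 1 \Leftrightarrow \prod_k(x-c_k)\le 0$ (as $p_n^2>0$) and that the solution set of the latter is exactly $\bigcup_{j=1}^n \cc{c_{2j-1},c_{2j}}$, these zeros must be the given $c_1\le\dots\le c_{2n}$ with $P^2-1 = p_n^2\prod_k(z-c_k)$. Next I would argue that no critical point of $P$ can have critical value in the open interval $\oo{-1,1}$: otherwise, writing $P(z)-c^*\approx\mathrm{const}\cdot(z-z^*)^\mu$ near such a point and perturbing $c^*$ slightly inside $\oo{-1,1}$ produces non-real points of $P^{-1}(\cc{-1,1})$, contradicting $E\subseteq\R$. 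Hence $P$ is strictly monotone on each $\cc{c_{2j-1},c_{2j}}$ (no interior critical point), while $P=\pm1$ at the ends of every gap and at every touch point; this forces the sign pattern $P(c_1),P(c_2),\dots$ to be $\varepsilon,-\varepsilon,-\varepsilon,\varepsilon,\varepsilon,-\varepsilon,\dots$, so that $\{c_k:k\in\mathcal A\}$ and $\{c_k:k\in\mathcal B\}$ are the zero sets of $P-1$ and $P+1$ (in one order or the other). Writing $P\mp1 = p_n\prod_{k\in\mathcal A}(z-c_k)$ resp.\ $p_n\prod_{k\in\mathcal B}(z-c_k)$ and subtracting shows these two monic degree-$n$ products differ by the constant $-2/p_n$, which by Newton's identities is \eqref{eqn:system_bk}.

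For the converse I would run this backwards. Given \eqref{eqn:system_bk}, set $A(z):=\prod_{k\in\mathcal A}(z-c_k)$ and $B(z):=\prod_{k\in\mathcal B}(z-c_k)$; by \eqref{eqn:system_bk} and Newton's identities $A-B=\gamma$ is constant, and $\gamma\neq 0$ because the smallest $c_k$, namely $c_1$, is a root of $A$ but is strictly smaller than every root of $B$. Put $P(z):=1-\frac{2}{\gamma}A(z)$, so $\deg P = n$, $P-1=-\frac{2}{\gamma}A$, $P+1=-\frac{2}{\gamma}(A-\gamma)=-\frac{2}{\gamma}B$, and $P^2-1=\frac{4}{\gamma^2}\prod_{k=1}^{2n}(z-c_k)$. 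For real $x$ this gives $|P(x)|\le 1 \Leftrightarrow \prod_k(x-c_k)\le 0 \Leftrightarrow x\in E$. To finish one must exclude non-real pre-images: since $c_{2j-1},c_{2j}$ lie in different blocks, $P$ takes both values $\pm1$ at them and (being $\le1$ in modulus on the interval) maps $\cc{c_{2j-1},c_{2j}}$ onto $\cc{-1,1}$; thus for every $c\in\cc{-1,1}$ the degree-$n$ polynomial $P-c$ has a zero in each of the $n$ intervals, these zeros are pairwise distinct for $c\in\oo{-1,1}$ (the intervals meet only in common endpoints, where $P=\pm1\neq c$), and for $c=\pm1$ the zeros of $P\mp1$ are manifestly real; hence $P^{-1}(c)\subseteq\R$ for all $c\in\cc{-1,1}$, so $P^{-1}(\cc{-1,1})=E$.

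The main obstacle, and the part I would treat most carefully, is the ``reality'' bookkeeping rather than the algebra: in the forward direction, ruling out critical values in $\oo{-1,1}$ (equivalently, justifying $P^2-1=p_n^2\prod_k(z-c_k)$ with the correct block structure and the strict monotonicity on each $\cc{c_{2j-1},c_{2j}}$), and in the backward direction, the dual assertion that the constructed $P$ satisfies $P-c$ real-rooted for every $c\in\cc{-1,1}$ so that no spurious non-real points enter $P^{-1}(\cc{-1,1})$. Everything else is the routine translation between ``difference of two monic polynomials is constant'' and the power-sum equations \eqref{eqn:system_bk}. Of course one may instead simply cite \cite[Lem.~2.1]{PeherstorferSchiefermayr1999} (or \cite[Lem.~1]{Schiefermayr2007}), where this equivalence is proved in precisely this form.
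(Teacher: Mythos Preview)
Your argument is correct; the paper itself does not give a proof of this theorem but simply records that it follows from \cite[Lem.~2.1]{PeherstorferSchiefermayr1999} (see also \cite[Lem.~1]{Schiefermayr2007}), exactly as you note in your final sentence. Your self-contained sketch---splitting the endpoints into the blocks $\mathcal A$ and $\mathcal B$, identifying $P\mp 1$ with $p_n\prod_{k\in\mathcal A}(z-c_k)$ and $p_n\prod_{k\in\mathcal B}(z-c_k)$ via the forced sign pattern of $P$ at the $c_k$, and translating ``$A-B$ is constant'' into the power-sum equations~\eqref{eqn:system_bk} via Newton's identities---is precisely the argument behind those cited lemmas, and your construction $P(z)=1-\tfrac{2}{\gamma}A(z)$ in the converse direction reproduces the explicit formula the paper records in Remark~\ref{rem:E_ell_intervals}\,\ref{it:E_ell_intervals_P}. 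One small point you use implicitly and might state: since $E$ contains a non-degenerate real interval on which $P$ is real-valued, $P$ necessarily has real coefficients, so $p_n\in\R$ and $p_n^2>0$ as you need.
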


\begin{remark} \label{rem:E_ell_intervals}
\begin{enumerate}
\item The set $E$ in Theorem~\ref{thm:E_ell_intervals} is the union of
$\ell$ \emph{disjoint} intervals, where $\ell \in \{ 1, 2, \ldots, n \}$,
that is, $E$ has $\ell-1$ \emph{gaps}.

\item \label{it:E_ell_intervals_P}
With the help of $c_1, \ldots, c_{2n}$ satisfying~\eqref{eqn:system_bk},
the corresponding polynomial $P$ of degree $n$ (unique up to sign) can be given 
by
\begin{equation} \label{eqn:P_for_E_ell_intervals}
\begin{aligned}
P(z) &= 1 - 2 \frac{(z-c_1) (z - c_4) (z - c_5) (z - c_8) (z - c_9) 
\ldots}{(c_2 - c_1) (c_2 - c_4) (c_2 - c_5) (c_2 - c_8) (c_2 - c_9) \ldots} \\
&= -1 + 2 \frac{(z-c_2) (z - c_3) (z - c_6) (z - c_7) (z - c_{10}) (z - c_{11}) 
\ldots}{(c_1-c_2) (c_1 - c_3) (c_1 - c_6) (c_1 - c_7) (c_1 - c_{10}) (c_1 - 
c_{11}) \ldots}.
\end{aligned}
\end{equation}
Note that $P(z)$ and $-P(z)$ generate the same pre-image $E$.

\item Figure~\ref{fig:E_ell_intervals} shows the real graph of a
polynomial of degree $n = 7$ whose pre-image consists of the $\ell = 4$
disjoint intervals $E = \cc{c_1, c_4} \cup \cc{c_5, c_6} \cup \cc{c_7, c_{12}} \cup \cc{c_{13}, c_{14}}$, which has been computed with the help
of~\eqref{eqn:system_bk} and~\eqref{eqn:P_for_E_ell_intervals}.
\end{enumerate}
\end{remark}

\begin{figure}[t!]
{\centering
\includegraphics[width=0.98\linewidth]{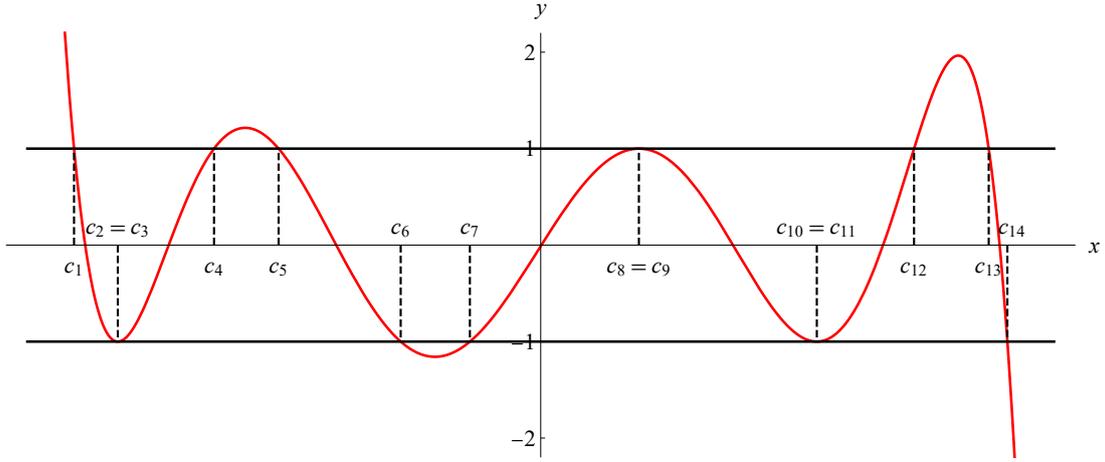}

}
\caption{Real graph of a polynomial $P$ of degree $n = 7$ whose pre-image of 
$\cc{-1, 1}$ consists of $\ell = 4$ intervals; see Example~\ref{ex:deg7_ell4} 
for the explicit formula of $P$.}
\label{fig:E_ell_intervals}
\end{figure}

\section{Sets with Two Components}\label{sect:two_components}

If the polynomial pre-image $E = P^{-1}(\cc{-1, 1})$ has two components
that are
symmetric with respect to the real line, the centers $a_1, a_2$ are given
explicitly in 
Theorem~\ref{thm:aj_for_two_components}, which follows from the more general 
result in~\cite[Cor.~3.10]{SchiefermayrSete2022}.
This covers the important case that $E$ consists of two real intervals,
that is
$E = \cc{b_1, b_2} \cup \cc{b_3, b_4}$
with $b_1 < b_2 < b_3 < b_4$.


\newpage

\begin{theorem}
\label{thm:aj_for_two_components}
Let $P$ be a polynomial of degree $n \geq 1$ as in~\eqref{eqn:P} with either real or 
purely imaginary coefficients such that $E = P^{-1}(\cc{-1, 1}) = E_1 \cup E_2$ 
with $E_1^* = E_1$ and $E_2^* = E_2$.
Let $n_1, n_2$ be the number of zeros of $P$ in $E_1$, $E_2$, respectively, and 
let $z_1$ be the critical point of $P$ in $\C \setminus E$. Then the points 
$a_1, a_2$ are real with $a_1 < a_2$ and are given by
\begin{equation} \label{eqn:a1a2_for_two_intervals}
\begin{aligned}
a_1 &= - \frac{p_{n-1}}{n p_n} - \bigg( \left( \frac{n_2}{n_1} \right)^{n_1} 
\frac{(-1)^{n_2}}{2 p_n} \Big( P(z_1) + \sqrt{P(z_1)^2 - 1} \Big) 
\bigg)^{1/n},\\
a_2 &= - \frac{p_{n-1}}{n p_n} + \bigg( \left( \frac{n_1}{n_2} \right)^{n_2} 
\frac{(-1)^{n_2}}{2 p_n} \Big( P(z_1) + \sqrt{P(z_1)^2 - 1} \Big) 
\bigg)^{1/n},
\end{aligned}
\end{equation}
with the positive real $n$-th root in~\eqref{eqn:a1a2_for_two_intervals}.
\end{theorem}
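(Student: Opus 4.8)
The plan is to specialize the nonlinear system of Theorem~\ref{thm:ell_components_real_symmetry} to $\ell = 2$ and solve it in closed form. Since $E_1^* = E_1$ and $E_2^* = E_2$, that theorem already supplies the qualitative part of the claim --- $a_1 < a_2$ are real, $P$ has a single (real) critical point $z_1 \in \C \setminus E$, and $Q$ from~\eqref{eqn:Q} has a single real critical point $w_1$ with $a_1 < w_1 < a_2$ --- together with the two relations
\begin{align*}
Q(w_1) &= P(z_1) + \sqrt{P(z_1)^2 - 1}, &
n_1 a_1 + n_2 a_2 &= - \frac{p_{n-1}}{p_n}.
\end{align*}
So all that is left is to turn the first relation into an explicit equation for the gap $a_2 - a_1$ and then combine it linearly with the second.

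First I would make $w_1$ explicit. By Theorem~\ref{thm:known_properties}\,(v), $w_1$ is the unique zero of $n_1(w-a_2) + n_2(w-a_1)$, so $w_1 = (n_2 a_1 + n_1 a_2)/n$, whence $w_1 - a_1 = \tfrac{n_1}{n}(a_2-a_1)$ and $w_1 - a_2 = -\tfrac{n_2}{n}(a_2-a_1)$. Substituting this into the factorization $Q(w) = 2 p_n (w-a_1)^{n_1}(w-a_2)^{n_2}$ and using $n = n_1 + n_2$ gives
\begin{equation*}
Q(w_1) = 2 p_n (-1)^{n_2} \frac{n_1^{n_1} n_2^{n_2}}{n^n}\, (a_2 - a_1)^n,
\end{equation*}
so that the first relation reads $(a_2 - a_1)^n = \dfrac{n^n (-1)^{n_2}}{2 p_n n_1^{n_1} n_2^{n_2}}\bigl(P(z_1) + \sqrt{P(z_1)^2 - 1}\bigr)$ (using $(-1)^{-n_2} = (-1)^{n_2}$). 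Then I would solve the $2\times 2$ system: with $D \coloneq a_2 - a_1$ and $S \coloneq -p_{n-1}/(n p_n)$ one gets at once $a_1 = S - \tfrac{n_2}{n} D$ and $a_2 = S + \tfrac{n_1}{n} D$; pulling the factor $\tfrac{n_2}{n}$ (resp.\ $\tfrac{n_1}{n}$) into the $n$-th root and simplifying $(n_2/n)^n n^n / n_2^{n_2} = n_2^{n_1}$ (resp.\ $(n_1/n)^n n^n / n_1^{n_1} = n_1^{n_2}$) produces exactly the two displayed formulas in~\eqref{eqn:a1a2_for_two_intervals}.

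The one point that needs care is the meaning of ``the positive real $n$-th root'': a priori $\tfrac{(-1)^{n_2}}{2 p_n}\bigl(P(z_1)+\sqrt{P(z_1)^2-1}\bigr)$ is merely a complex number. But by Theorem~\ref{thm:ell_components_real_symmetry} the gap $D = a_2 - a_1$ is a \emph{positive real}, and we have just shown that $D^n$ equals a positive constant times that quantity; hence the quantity is itself a positive real, $D$ is its positive real $n$-th root, and likewise for $\tfrac{n_1}{n}D$ and $\tfrac{n_2}{n}D$. This makes the argument uniform in the two hypotheses (real or purely imaginary coefficients), so no case distinction is needed; everything else is the bookkeeping of $n$-th roots described above. (Alternatively one could simply invoke \cite[Cor.~3.10]{SchiefermayrSete2022} with $\ell = 2$, but the direct computation is short enough to be worth giving.)
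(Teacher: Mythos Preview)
Your proof is correct. The paper does not actually give a proof of this theorem: it simply states (in the sentence preceding the statement) that the result ``follows from the more general result in~\cite[Cor.~3.10]{SchiefermayrSete2022}'' and leaves it at that. Your argument is the self-contained computation that this citation hides --- specializing Theorem~\ref{thm:ell_components_real_symmetry} to $\ell=2$, writing $w_1$ and $Q(w_1)$ explicitly, and solving the resulting $2\times 2$ system --- and you already note the cited corollary as an alternative. The extra care you take with the positivity of the radicand (deducing it from $a_2 - a_1 > 0$ rather than by a case analysis on the coefficient type) is a nice touch that makes the real/purely-imaginary hypothesis transparent.
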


\begin{figure}[t]
{\centering
\includegraphics[width=0.48\linewidth]{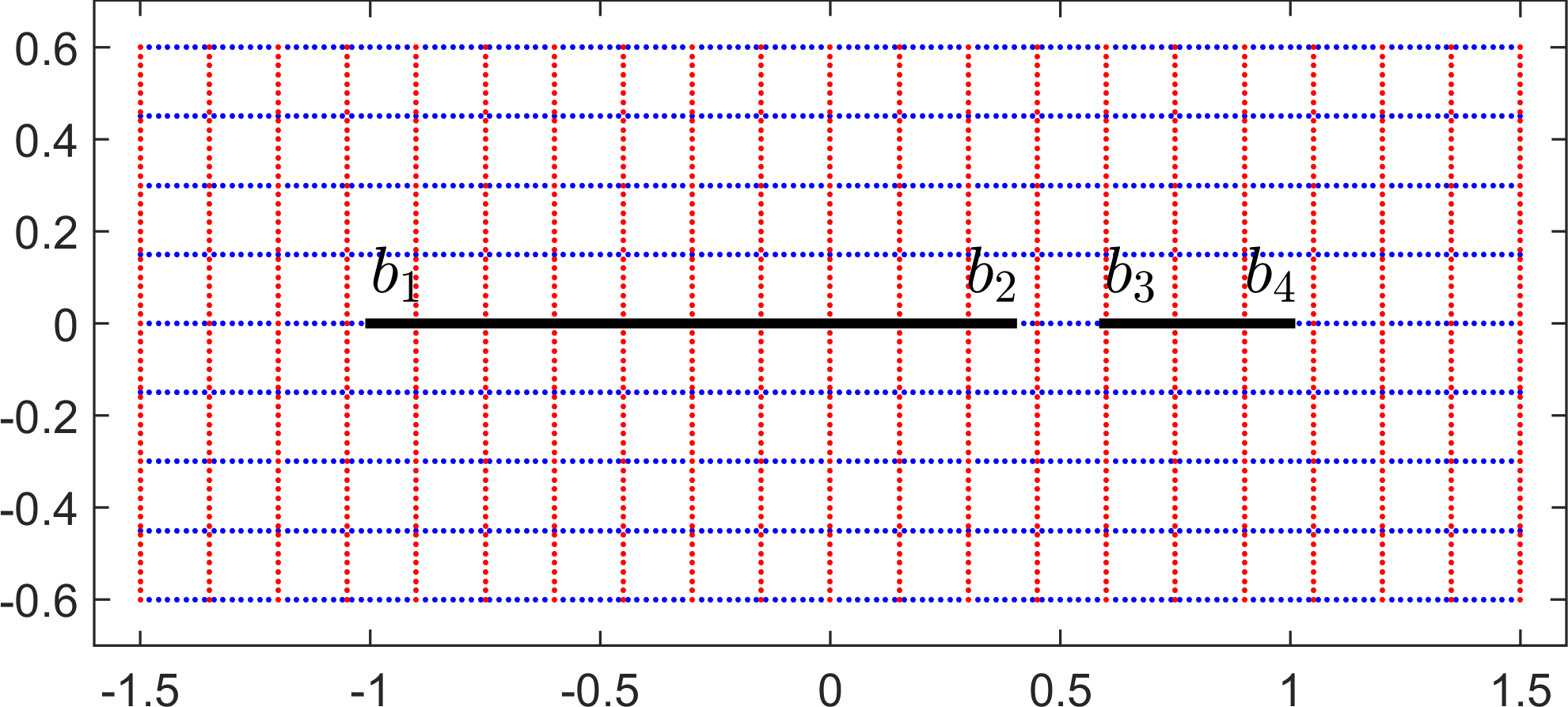}
\includegraphics[width=0.48\linewidth]{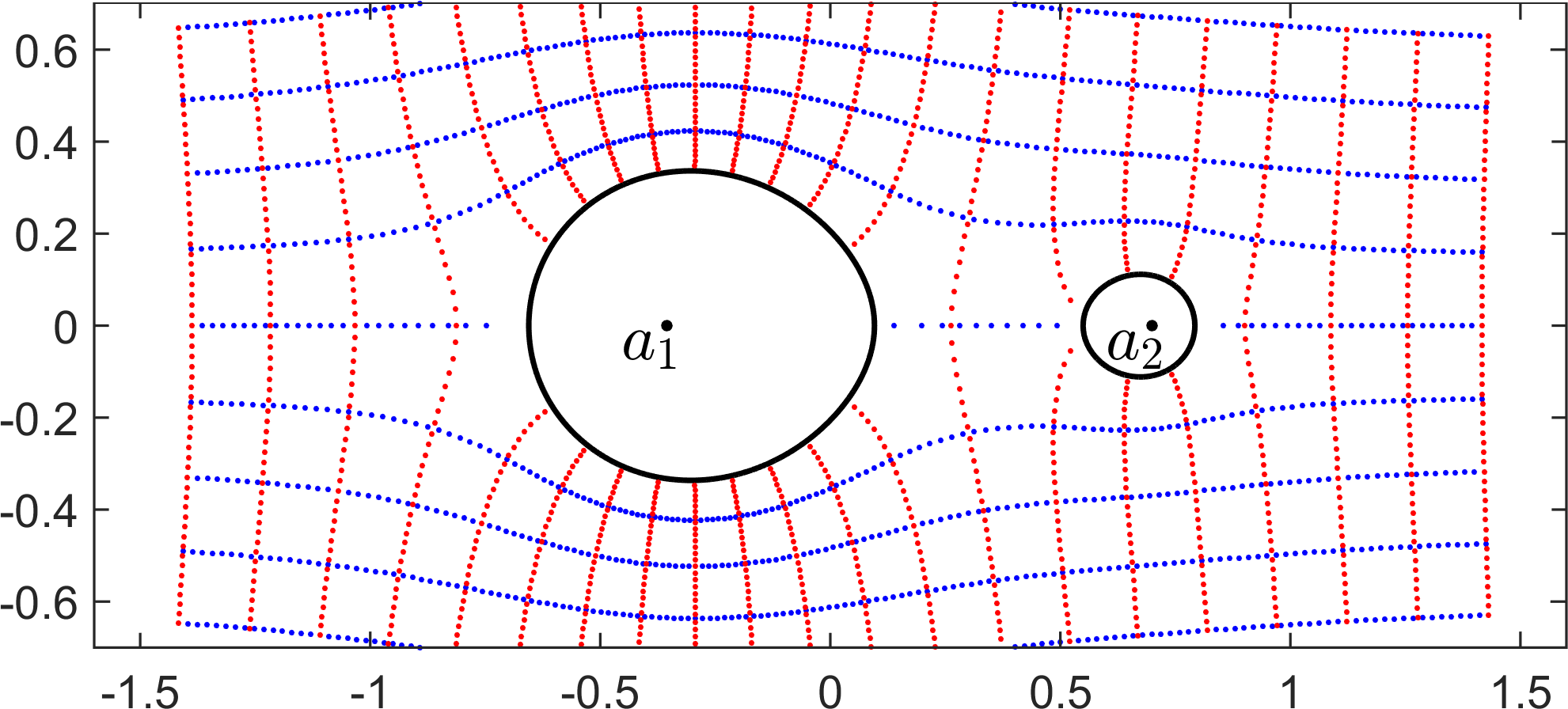}

\includegraphics[width=0.48\linewidth]{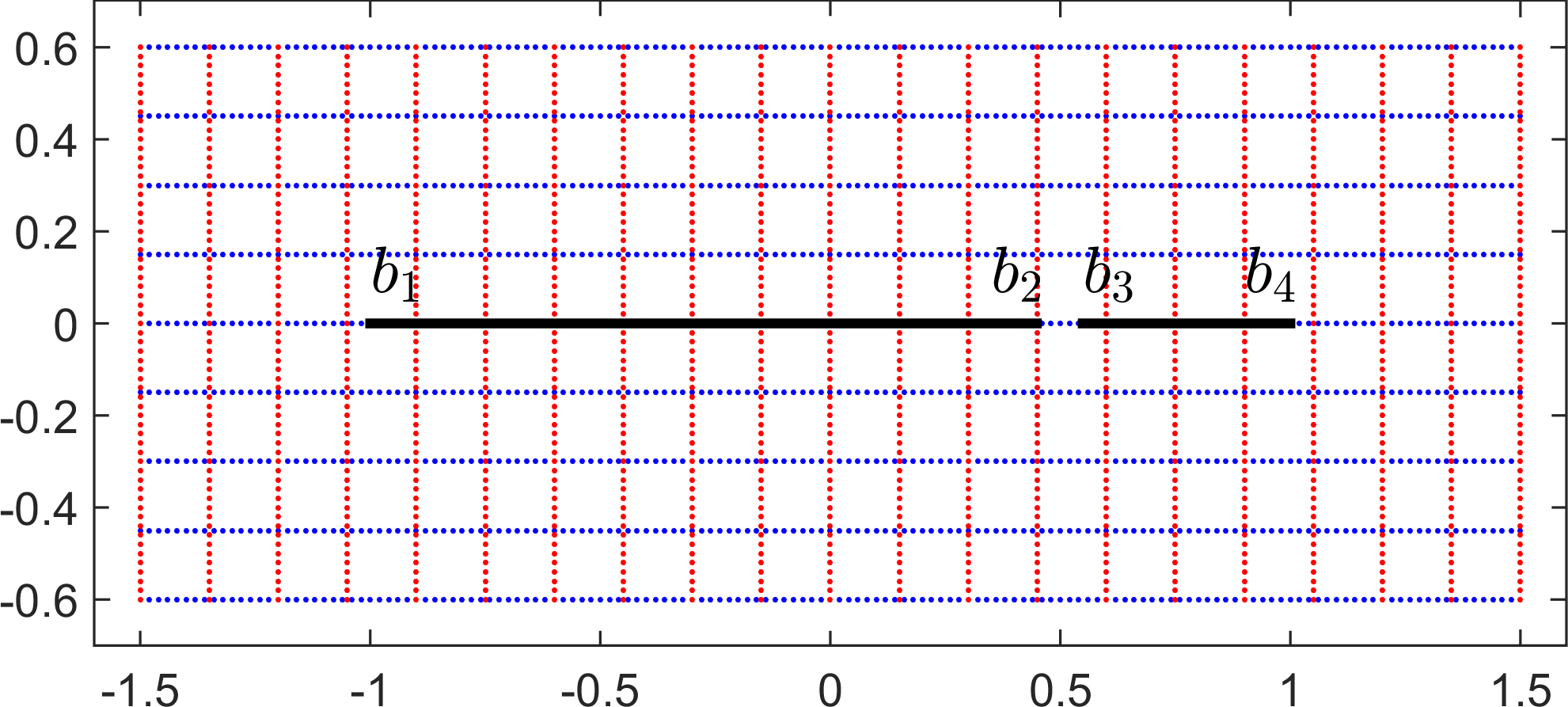}
\includegraphics[width=0.48\linewidth]{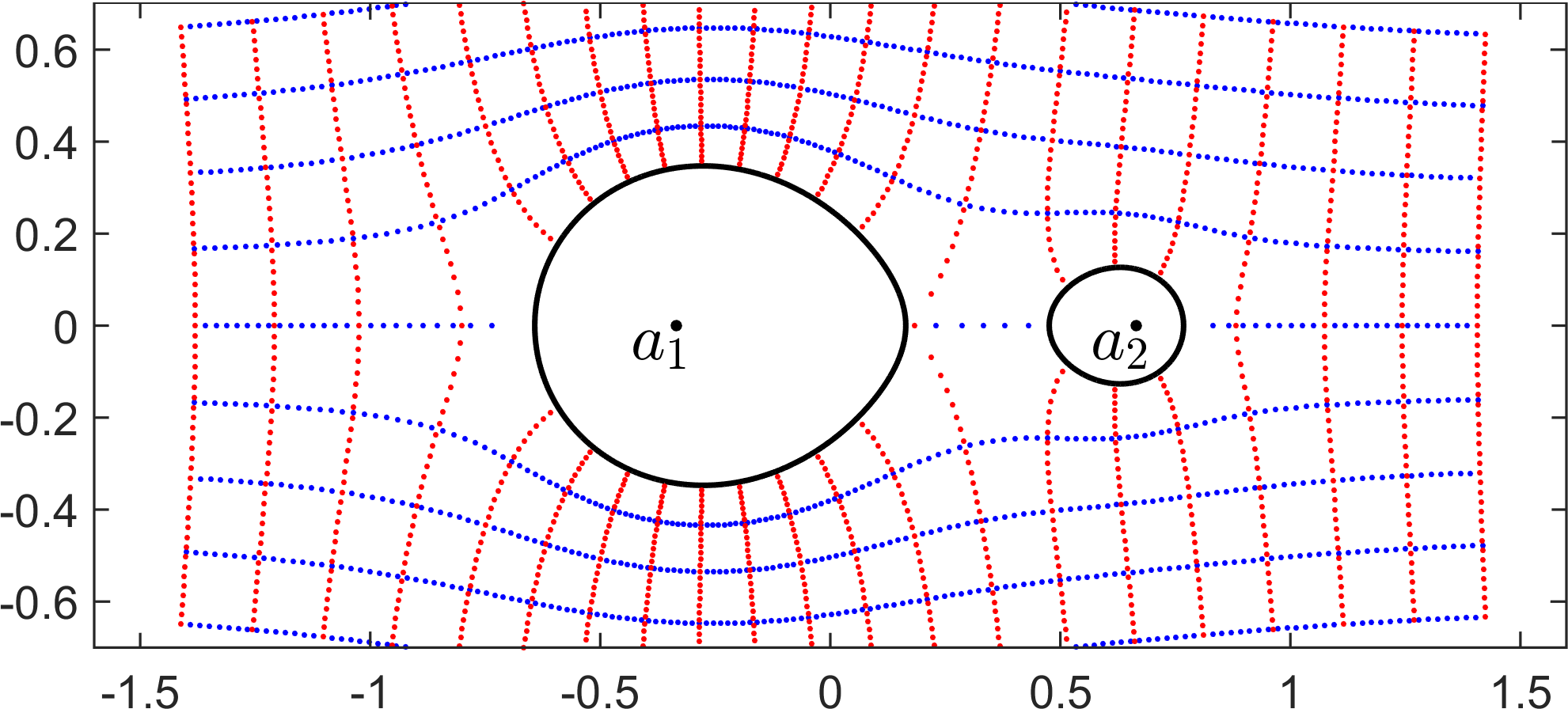}

\includegraphics[width=0.48\linewidth]{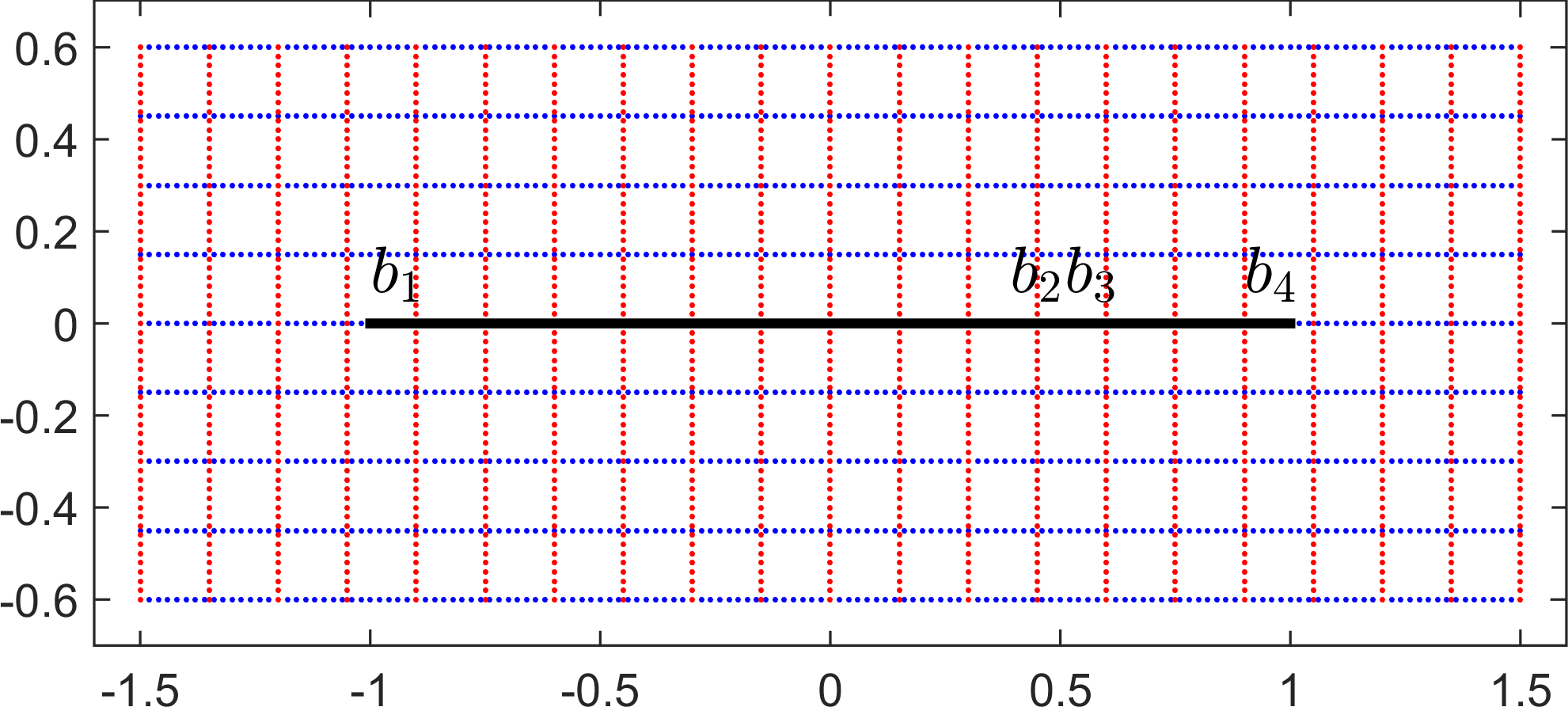}
\includegraphics[width=0.48\linewidth]{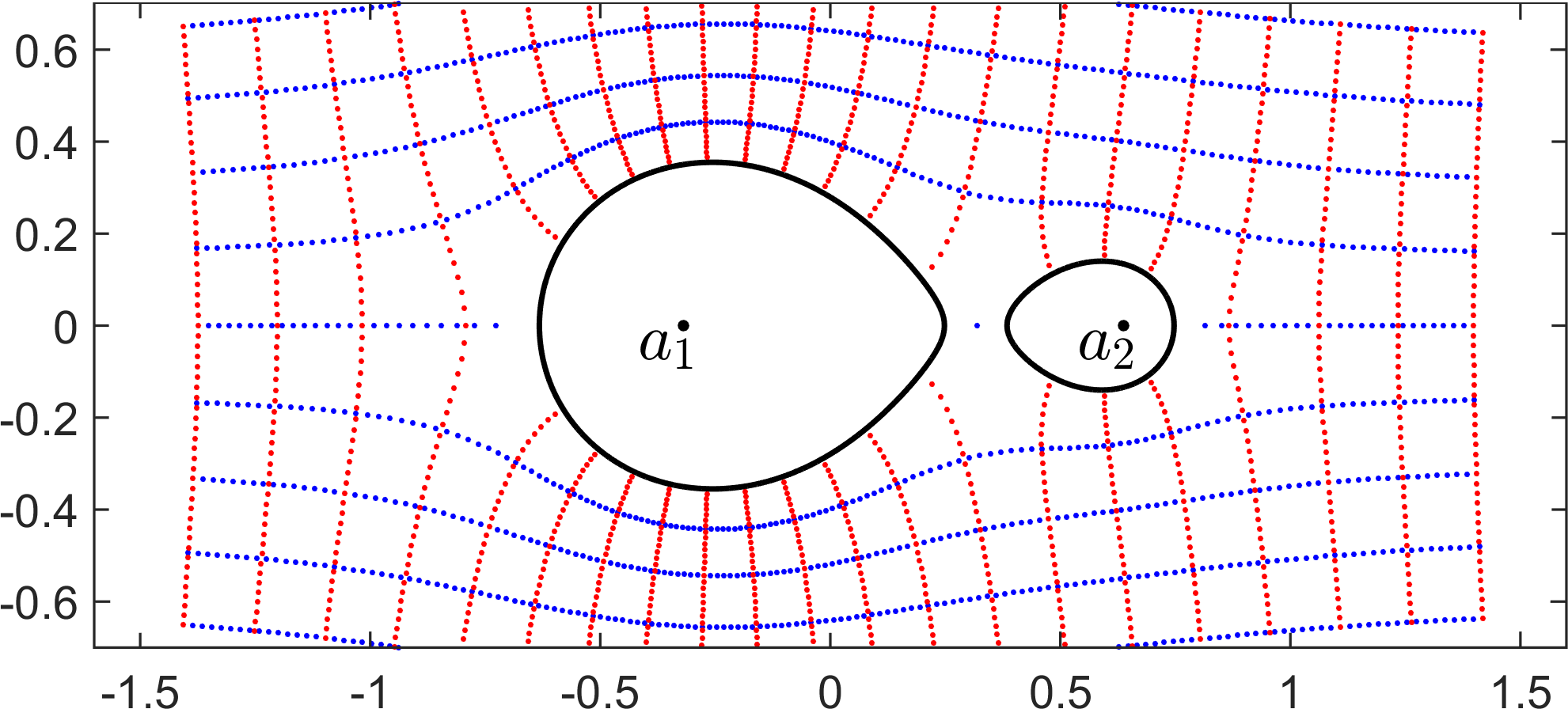}

}
\caption{Illustration of the Walsh map $\Phi$ for $E = \cc{-1, b_2} \cup 
\cc{b_3, 1}$ with $b_2 = \frac{1}{2} (1 - \alpha^2) - \alpha$ and $b_3 = 
\frac{1}{2} (1 - \alpha^2) + \alpha$ for $\alpha = 0.1, 0.05, 0.01$ (from top 
to bottom); see Example~\ref{ex:P3_two_intervals}.
Left: Original domain with intervals (black) and a grid.
Right: $\partial L$ (black) and image of the grid under $\Phi$.
}
\label{fig:P3_two_intervals}
\end{figure}


In the following example for a one-parameter family of two intervals,
we apply the formulas of Theorem~\ref{thm:aj_for_two_components}
in order to obtain the lemniscatic domain and compute the corresponding
conformal map~$\Phi$.

\begin{example} \label{ex:P3_two_intervals}
For $0 < \alpha < 1$, consider the polynomial
\begin{equation*}
P(z) = 1 + \frac{(z-1) (2 z + 1 + \alpha^2)^2}{(1-\alpha^2)^2}
= \frac{4 z^3 + 4 \alpha^2 z^2 + (\alpha^4 - 2 \alpha^2 + 3) z - 4 \alpha^2}{(1-\alpha^2)^2}
\end{equation*}
of degree $n = 3$. It is easy to see that
$E = P^{-1}(\cc{-1, 1}) = [-1, b_2] \cup [b_3, 1]$,
where
$b_2 = \frac{1}{2} (1 - \alpha^2) - \alpha$,
$b_3 = \frac{1}{2} (1 - \alpha^2) + \alpha$.
We have $n_1 = 2$, $n_2 = 1$,
$p_3 = 4/(1-\alpha^2)^2$, $p_2 = 4 \alpha^2/(1-\alpha^2)^2$,
and the critical point of $P$ outside 
$E$ is $z_1 = \frac{1}{6} (3 - \alpha^2)$.  Then, using the correct 
branch of the square root as indicated after~\eqref{eqn:gE}, we obtain
\begin{align*}
P(z_1) + \sqrt{P(z_1)^2 - 1}
&= - \frac{2 \alpha^6 - 9 \alpha^4 + 108 \alpha^2 + 27 + 2 \alpha (9 - 
\alpha^2) (3 + \alpha^2)^{3/2}}{27 (1 - \alpha^2)^2}.
\end{align*}
By~\eqref{eqn:a1a2_for_two_intervals}, the centers are
\begin{align*}
a_1 &= - \frac{\alpha^2}{3} - \frac{1}{6 \sqrt[3]{4}} \Big(
2 \alpha^6 - 9 \alpha^4 + 108 \alpha^2 + 27 + 2 \alpha (9 - 
\alpha^2) (3 + \alpha^2)^{3/2} \Big)^{1/3}, \\
a_2 &= - \frac{\alpha^2}{3} + \frac{1}{3 \sqrt[3]{4}} \Big(
2 \alpha^6 - 9 \alpha^4 + 108 \alpha^2 + 27 + 2 \alpha (9 - 
\alpha^2) (3 + \alpha^2)^{3/2} \Big)^{1/3}.
\end{align*}
By Theorem~\ref{thm:known_properties}\,\ref{it:relation_between_maps},
we have $L = \{ w \in \C : \abs{w - a_1}^2 \abs{w - a_2} \leq
(1-\alpha^2)^2/8 \}$. In order to compute 
$w = \Phi(z)$ for $z \in \C \setminus E$, we solve the equation
\begin{equation*}
2 p_n (w - a_1)^2 (w - a_2) = P(z) + \sqrt{P(z)^2 - 1};
\end{equation*}
see Theorem~\ref{thm:known_properties}\,\ref{it:relation_between_maps}. We use the mapping properties of~$\Phi$
established in Theorem~\ref{thm:mapping_properties_Phi} to determine the 
correct value of $w$. The image of $z_1$ is $w_1=(n_2 a_1 + n_1 a_2)/n$. For 
real $z$, if $z > 1$ then $w \in \R$ with $w > a_2$, if $z_1 < z < 
b_3$ then $w_1 < w < a_2$, if $b_2 < z < z_1$ then $a_1 < w < w_1$, and if $z < 
-1$ then $w < a_1$. If $\im(z) > 0$, we choose $w$ with $\im(w) > 0$ that is 
closest to $z$. For the lower half-plane, we use $\Phi(\conj{z}) = 
\conj{\Phi(z)}$.

In the limiting case $\alpha \to 1$, we have $b_2 \to -1$, $b_3 \to 1$,
$E$ degenerates to the set $\{ -1, 1 \}$, and $a_1 
\to -1$, $a_2 \to 1$. In the other limiting case $\alpha \to 0$, we have 
$b_2 \to \frac{1}{2}$ and $b_3 \to \frac{1}{2}$, so that $E$ tends to 
the interval $\cc{-1, 1}$, while $a_1 \to - 1/(2 \sqrt[3]{4}) \eqcolon 
a_1(0)$ and $a_2 \to 1/\sqrt[3]{4} \eqcolon a_2(0)$, so that the 
corresponding set $L$ ``converges'' to
$\{ w \in \C : \abs{w + 1/(2 \sqrt[3]{4})}^2 \abs{w - 
1/\sqrt[3]{4}} \leq 1/8 \}$,
which is an 
``eight'' self-intersecting at $w_1 = (n_2 a_1(0) + n_1 a_2(0))/n = 
1/(2\sqrt[3]{4})$, compare Figure~\ref{fig:P3_two_intervals}. However, the 
centers $a_1, a_2$ of $L$ \emph{do not} converge to the center $a_1 = 0$ 
of the lemniscatic domain $L = \{ w \in \C: \abs{w} \leq 1/2 \}$ corresponding 
to $E = \cc{-1, 1}$. This shows that a discontinuity in the connectivity (in our 
example from $\ell = 2$ components to $\ell = 1$ component) leads to a 
discontinuity in the centers.
\end{example}


If the polynomial~$P$ is real and even, then $E = P^{-1}(\cc{-1, 1})$ is
symmetric with respect to the real line as well as symmetric with respect to the origin,
and we obtain an explicit formula for the conformal map $\Phi$.


\begin{theorem} \label{thm:double_symmetry}
Let the polynomial $P(z) = \sum_{j=0}^n p_j z^j$ with $p_n \neq 0$ be real, 
even and assume that $P$ has exactly one critical point $z_1$ with critical 
value $P(z_1) \notin \cc{-1, 1}$ and $z_1$ is a simple zero of $P'$, that is 
$z_1 \notin E \coloneq P^{-1}(\cc{-1, 1})$ while the other $n-2$ critical 
points are in $E$. Then the following assertions hold:
\begin{enumerate}
\item The set $E$ consists of two components, $E = E_1 \cup E_2$, where $E_1, 
E_2$ are simply connected disjoint infinite compact sets with $E_1 = - E_2$, 
and $E = -E$, $E^* = E$, and $0 \notin E$.
In particular, only the following two cases can occur:
$E_1^* = E_1$, $E_2^* = E_2$ (case 1), and $E_1^* = E_2$ (case 2).

\item \label{it:double_sym_L}
If $E_1^* = E_1$, $E_2^* = E_2$, then
\begin{equation} \label{eqn:double_symmetry_a2}
a_2 = \left( \frac{(-1)^{n/2}}{2 p_n} \left( p_0 + \sqrt{p_0^2 - 1} \right)
\right)^{1/n} > 0
\end{equation}
and if $E_1^* = E_2$ then
\begin{equation}
a_2 = \ii \left( \frac{1}{2 p_n} \left( p_0 + \sqrt{p_0^2 - 1} \right) 
\right)^{1/n},
\end{equation}
with the positive real $n$-th root in both cases.
For $\sqrt{p_0^2 - 1}$, the positive branch is taken if $p_0 > 1$ and the 
negative branch if $p_0 < -1$.
Moreover, in both cases, $a_1 = - a_2$ and
\begin{equation}
L = \{ w \in \C : \abs{w^2 - a_2^2}^{1/2} \leq \capacity(E) = (2 
\abs{p_n})^{-1/n} \}.
\end{equation}

\item \label{it:double_sym_Phi}
The Walsh map of $E$ is $\Phi : \comp{E} \to \comp{L}$,
\begin{equation*}
\Phi(z)
= \sqrt{a_2^2 + \left( \frac{P(z)}{2 p_n} + \sqrt{\left( \frac{P(z)}{2 
p_n} \right)^2 - \frac{1}{4 p_n^2}} \right)^{2/n}}
\end{equation*}
with that branch of the $n$-th root and of the outer square root which yields 
positive real values of $\Phi(z)$ for sufficiently large $z$ on the positive 
real line.
\end{enumerate}
\end{theorem}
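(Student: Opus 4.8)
The plan is to treat the three assertions in order, exploiting the symmetries of $E$ together with the results already established. For (i), I would first observe that since $P$ is even, $P(-z) = P(z)$, so $E = P^{-1}(\cc{-1,1})$ satisfies $E = -E$; since $P$ has real coefficients, $P(\conj{z}) = \conj{P(z)}$, so $E^* = E$. By hypothesis $P$ has exactly $\ell - 1 = 1$ critical point outside $E$, so Theorem~\ref{thm:known_properties}\,(i) gives that $E$ has $\ell = 2$ components, $E = E_1 \cup E_2$. Because $P$ is even, $P'$ is odd, so if $z_1$ is the unique critical point outside $E$ then $-z_1$ is also a critical point with $P(-z_1) = P(z_1) \notin \cc{-1,1}$; since there is only one such point, $z_1 = -z_1$, i.e.\ $z_1 = 0$. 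Hence $0 \notin E$, and $0$ lies in the single gap separating $E_1$ and $E_2$ on the real axis; since $E = -E$ swaps the two components it must send $E_1$ to $E_2$, so $E_1 = -E_2$. Finally, since $E^* = E$ permutes $\{E_1, E_2\}$, either it fixes both (case~1) or swaps them (case~2); no other possibility exists.

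For (ii), consider first case~1, $E_1^* = E_1$, $E_2^* = E_2$. By Theorem~\ref{thm:aj_symm_origin} applied with $E_2 = -E_1$ we get $m_2 = m_1$ and $a_2 = -a_1$; since $\sum m_j = 1$, $m_1 = m_2 = 1/2$, i.e.\ $n_1 = n_2 = n/2$ (in particular $n$ is even). By Theorem~\ref{thm:mapping_properties_Phi}\,(v) we have $a_1 < a_2$, and since $a_1 = -a_2$ this forces $a_2 > 0$. To pin down the value of $a_2$, I would use the relation $Q(\Phi(z)) = P(z) + \sqrt{P(z)^2 - 1}$ from Theorem~\ref{thm:known_properties}\,(iii) with $Q(w) = 2p_n (w - a_1)^{n/2}(w - a_2)^{n/2} = 2p_n (w^2 - a_2^2)^{n/2}$, evaluated at the point where $\Phi(z) = 0$. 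Since $a_1 = -a_2$, the center $0$ is the midpoint of the gap $J_0$ between $L_1$ and $L_2$; by Theorem~\ref{thm:mapping_properties_Phi}, $\Phi$ maps $\R \setminus E$ onto $\R \setminus L$ increasingly, so $0 \in \R \setminus L$ corresponds to a unique preimage $z_* \in \R \setminus E$, and by the symmetry $\Phi(-z) = -\Phi(z)$ we must have $z_* = 0$. Thus $\Phi(0) = 0$ and $P(0) = p_0$, so
\begin{equation*}
2 p_n (0 - a_2^2)^{n/2} = p_0 + \sqrt{p_0^2 - 1},
\end{equation*}
i.e.\ $2 p_n (-1)^{n/2} a_2^n = p_0 + \sqrt{p_0^2 - 1}$, which rearranges to \eqref{eqn:double_symmetry_a2}; the branch of the square root is fixed by requiring $g_E(0) > 0$, which forces $|p_0 + \sqrt{p_0^2 -1}| > 1$, giving the positive branch when $p_0 > 1$ and the negative branch when $p_0 < -1$. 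The claim $L = \{ |w^2 - a_2^2|^{1/2} \leq \capacity(E) \}$ is then immediate from \eqref{eqn:lemniscatic_domain} with $U(w) = (w - a_1)^{1/2}(w - a_2)^{1/2} = (w^2 - a_2^2)^{1/2}$ and $\capacity(E) = (2|p_n|)^{-1/n}$ from \eqref{eqn:capacity}. Case~2, $E_1^* = E_2$, is analogous: here $E$ is symmetric under $z \mapsto \conj{z}$ swapping the components and under $z \mapsto -z$ also swapping them, hence invariant under $z \mapsto -\conj{z}$, i.e.\ symmetric about the imaginary axis, so by Theorem~\ref{thm:mapping_properties_Phi}\,(ii) applied after noting $\ii\R \setminus E$ is the relevant axis, $a_1, a_2$ are purely imaginary, and the same evaluation at the preimage of the center (now $z = 0$ again by the two symmetries, with $\Phi(0) = 0$) gives $2 p_n (- a_2^2)^{n/2} = p_0 + \sqrt{p_0^2-1}$; since $a_2 = \ii t$ with $t$ real this reads $2 p_n t^n = p_0 + \sqrt{p_0^2 - 1}$, yielding the stated formula.

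For (iii), I would invert the relation \eqref{eqn:relation_between_maps}. Writing $Q(w) = 2 p_n (w^2 - a_2^2)^{n/2}$, equation \eqref{eqn:relation_between_maps} becomes
\begin{equation*}
(\Phi(z)^2 - a_2^2)^{n/2} = \frac{P(z) + \sqrt{P(z)^2 - 1}}{2 p_n},
\end{equation*}
and since $P(z) + \sqrt{P(z)^2 - 1} = \frac{1}{4p_n^2}\bigl( 2p_n P(z) + 2 p_n \sqrt{P(z)^2 -1}\bigr) \cdot 2 p_n$, one can write the right-hand side as $\bigl( \tfrac{P(z)}{2p_n} + \sqrt{(\tfrac{P(z)}{2p_n})^2 - \tfrac{1}{4p_n^2}} \bigr)^2 \cdot (\text{correction})$ — more precisely, noting that $\bigl(P(z) + \sqrt{P(z)^2-1}\bigr)/(2p_n) = \bigl(\sqrt{P(z)} + \sqrt{P(z)^2-1}/\sqrt{\cdots}\bigr)$ is awkward, so instead I would simply take $n$-th roots directly: $\Phi(z)^2 - a_2^2 = \bigl( \tfrac{P(z) + \sqrt{P(z)^2-1}}{2 p_n} \bigr)^{2/n}$, and rewrite $\tfrac{P(z)+\sqrt{P(z)^2-1}}{2p_n} = \tfrac{P(z)}{2p_n} + \tfrac{1}{2p_n}\sqrt{P(z)^2-1} = \tfrac{P(z)}{2p_n} + \sqrt{\bigl(\tfrac{P(z)}{2p_n}\bigr)^2 - \tfrac{1}{4p_n^2}}$ (pulling $\tfrac{1}{2p_n}$ inside the square root squares it). Solving for $\Phi(z)$ gives the displayed formula. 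The branch selection is then justified by Theorem~\ref{thm:mapping_properties_Phi}: for large real $z > 0$, $\Phi(z) \sim z \to +\infty$, which singles out the branches of the $2/n$-th power and the outer square root that are positive and real there; since $\comp{E}$ is connected and $\Phi$ is continuous, analytic continuation of that branch along $\comp{E}$ recovers $\Phi$ everywhere. The main obstacle I anticipate is not the algebra but the careful handling of the two square-root branch cuts and the $n$-th root simultaneously — in particular verifying that the preimage of the center under $\Phi$ is exactly $z = 0$ (rather than some other point of the gap) in both case~1 and case~2, which is where the combination of the two symmetries $\Phi(-z) = -\Phi(z)$ and $\Phi(\conj z) = \conj{\Phi(z)}$ (or $\Phi(-\conj z) = -\conj{\Phi(z)}$) must be invoked together with the monotonicity on the real (or imaginary) axis to exclude the spurious solutions.
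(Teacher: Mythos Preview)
Your proposal is essentially correct and follows the paper's overall strategy: use the evenness to force $z_1 = 0$, exploit $E = -E$ to get $a_1 = -a_2$ and $n_1 = n_2 = n/2$, and evaluate the fundamental relation $Q(\Phi(z)) = P(z) + \sqrt{P(z)^2-1}$ at $z = 0$ (equivalently, at the critical point $z_1$) to pin down $a_2$. For case~1 the paper simply cites Theorem~\ref{thm:aj_for_two_components} with $p_{n-1}=0$, which amounts to exactly your computation.

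The genuine difference is in case~2. You argue directly that $E$ is symmetric about the imaginary axis and try to conclude $a_1,a_2 \in \ii\R$ from Theorem~\ref{thm:mapping_properties_Phi}\,(ii); this is workable but needs an extra step (Theorem~\ref{thm:mapping_properties_Phi}\,(ii) does not by itself say the centers are purely imaginary --- you need a complex-conjugation analogue of Theorem~\ref{thm:aj_symm_origin}), and you do not fix the sign of $t$ in $a_2 = \ii t$. The paper instead sets $\widetilde{P}(z) \coloneq P(\ii z)$, which rotates $E$ by $-\ii$ into a set $\widetilde{E}$ falling under case~1, applies case~1 to $\widetilde{E}$, and then transports the answer back via $a_2 = \ii\,\widetilde{a}_2$ and $\Phi(z) = \ii\,\widetilde{\Phi}(-\ii z)$ (using~\cite[Lem.~2.3]{SeteLiesen2016}). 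This reduction buys you the sign convention and the purely-imaginary conclusion for free, and avoids having to redo the branch analysis on the imaginary axis.

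One small point in~(i): your sentence ``$0$ lies in the single gap separating $E_1$ and $E_2$ on the real axis'' presupposes case~1 and does not apply when $E_1^* = E_2$ (then neither component meets $\R$). The clean argument --- which the paper handles via Theorem~\ref{thm:symmetric_E} in the appendix --- is that $-E_1$ must equal some component; if $-E_1 = E_1$ then $0 \in E_1$ (this uses simple connectedness), contradicting $0 \notin E$, so $-E_1 = E_2$.
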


\begin{proof}
(i) The fact that $E$ has $\ell = 2$ components is an immediate consequence of
Theorem~\ref{thm:known_properties}\,(i), since $P$ has exactly one critical 
point outside $E$.  Thus, $E = E_1 \cup E_2$ with disjoint, simply connected, 
infinite compact sets $E_1, E_2$.
Since $P$ is real and even, we have $E^* = E$ and $E = -E$.
By Theorem~\ref{thm:symmetric_E}\,(ii) and Corollary~\ref{cor:E_symm_wrt_R},
assertion~(i) follows.

\ref{it:double_sym_L} and~\ref{it:double_sym_Phi}:
The number of zeros of $P$ in $E_1$ and in $E_2$ is the same, i.e., 
$n_1 = n_2 = n/2$.
Since $P$ is even, $z = 0$ is a critical point of $P$.
Since $0 \notin E$, we have $z_1 = 0$ and thus $p_0 = P(0) \in \R \setminus 
\cc{-1, 1}$.

We consider the two cases pointed out in~(i).

Case 1: If $E_1^* = E_1$ and $E_2^* = E_2$ then~\ref{it:double_sym_L}
is a direct 
consequence of Theorem~\ref{thm:aj_for_two_components} (where $p_{n-1} = 0$ 
since $P$ is even).
By~\eqref{eqn:Q}, $Q(w) = 2 p_n (w^2 - a_2^2)^{n/2}$ and 
\ref{it:double_sym_Phi} follows from Theorem~\ref{thm:known_properties}\,(iii).
Note that $P(z)/(2 p_n)$ is a real polynomial with positive leading 
coefficient $1/2$.  Therefore, the complex roots have to be taken as indicated 
in the theorem.

Case 2: $E_1^* = E_2$.  We reduce this case to case 1 as follows.  The polynomial $\widetilde{P}(z) \coloneq P(\ii z)$ also satisfies the assumptions of the theorem, and the set
$\widetilde{E} = \widetilde{P}^{-1}(\cc{-1, 1}) ) = - \ii E$ 
falls under case~1, so that the corresponding set $\widetilde{L}$ and
conformal map $\widetilde{\Phi} :\comp{\widetilde{E}} \to \comp{\widetilde{L}}$
are determined by the formulas in case~1.
By~\cite[Lem.~2.3]{SeteLiesen2016}, we have $a_2 = \ii \widetilde{a}_2$ and
$\Phi(z) = \ii \widetilde{\Phi}(- \ii z)$, which yields after a short 
calculation the formulas in case~2.
\end{proof}

\begin{remark}  \label{rem:double_symmetry_cases}
\begin{enumerate}
\item \label{it:double_symmetry_case1}
In Theorem~\ref{thm:double_symmetry}, the following equivalence holds:
$E$ contains a real point if and only if $E_1^* = E_1$ and $E_2^* = E_2$,
which follows from~\cite[Lem.~A.2]{SchiefermayrSete2022}.
In more detail:
If $x \in E \cap \R$, then without loss of generality, $x \in E_1$ and hence
$x \in E_1^*$, hence $E_1 = E_1^*$.
Conversely, if $E_1^* = E_1$ and $E_2^* = E_2$, both $E_1$ and $E_2$ contain at 
least one real point~\cite[Lem.~A.2]{SchiefermayrSete2022}.

\item If $E_1^* = E_1$, $E_2^* = E_2$ in Theorem~\ref{thm:double_symmetry} then 
$\partial L \cap \R$
consists of the four points $c_1 < c_2 < 0 < c_3 < c_4$ with $c_1 = - c_4$, $c_2 = - c_3$ and
\begin{equation*}
c_{3,4} = \left( a_2^2 \mp (2 \abs{p_n})^{-2/n} \right)^{1/2}.
\end{equation*}
Denote $E_1 \cap \R = \cc{b_1, b_2}$ and $E_2 \cap \R = \cc{b_3, b_4}$ with $b_1 \leq b_2 < 0 < b_3 \leq b_4$; 
see~\cite[Lem.~A.2]{SchiefermayrSete2022}.  Since $E_1 = -E_2$, we have
$b_1 = -b_4$, $b_2 = -b_3$.
Then, by Theorem~\ref{thm:mapping_properties_Phi}\,(iv) and~\ref{it:known_properties_crit_pts},
$\Phi(\oo{-\infty, b_1}) = \oo{-\infty, c_1}$,
$\Phi(\oo{b_2, 0}) = \oo{c_2, 0}$,
$\Phi(\oo{0, b_3}) = \oo{0, c_3}$,
$\Phi(\oo{b_4, \infty}) = \oo{c_4, \infty}$.
In addition, $\Phi$ satisfies $\Phi(0) = 0$, $\Phi(\ii \R^-) = \ii \R^-$,
$\Phi(\ii \R^+) = \ii \R^+$.

\item If $E_1^* = E_2$, the roles of $\R$ and $\ii \R$ in (ii) switch.
\end{enumerate}
\end{remark}

Let us consider two illustrative examples for Theorem~\ref{thm:double_symmetry}.

\begin{example}[Two symmetric real intervals] \label{ex:two_sym_real_intervals}
For $0 < \alpha < \beta$, consider the polynomial
\begin{equation*}
P(z) = \frac{2}{\beta^2 - \alpha^2} (z^2 - \beta^2) + 1,
\end{equation*}
of degree $n = 2$.  Then
$E = P^{-1}(\cc{-1, 1}) = \cc{-\beta, -\alpha} \cup \cc{\alpha, \beta}$, and
$n_1 = n_2 = 1$.  The critical point of $P$ is $z_1 = 0 \notin E$, so that 
$P$ satisfies the assumptions in Theorem~\ref{thm:double_symmetry}.
Since $p_0 = - \frac{\beta^2 + \alpha^2}{\beta^2 - \alpha^2} < -1$, we have
$p_0 + \sqrt{p_0^2 - 1} = - \frac{\beta + \alpha}{\beta - \alpha}$.
Hence, by~\eqref{eqn:double_symmetry_a2},
$a_2 = (\beta + \alpha)/2$ and $a_1 = -a_2$, so that
$L = \{ w \in \C : \abs{w - a_2^2}^{1/2} \leq \sqrt{(\beta^2 - \alpha^2)/4} \}$.
Since
\begin{equation*}
P(z) + \sqrt{P(z)^2 - 1} = \frac{1}{\beta^2 - \alpha^2}
\Big( 2 z^2 - \alpha^2 - \beta^2 + 2 \sqrt{(z^2 - \alpha^2)(z^2 - \beta^2)} 
\Big),
\end{equation*}
we obtain
\begin{equation*}
\Phi(z)
= \frac{1}{\sqrt{2}} \sqrt{z^2 + \alpha \beta + \sqrt{(z^2 - \alpha^2)(z^2 - 
\beta^2)}}
\end{equation*}
which is in accordance with~\cite[Cor.~3.3]{SeteLiesen2016}.
Figure~\ref{fig:two_sym_real_intervals} shows the sets $E$ and $L$ and the 
image of a grid under $\Phi$.  For the numerical evaluation of $\Phi(z)$ we use 
the modified formula
\begin{equation*}
\Phi(z) = z \sqrt{\left(a_2^2 + \frac{P(z) + \sqrt{P(z)^2 - 1}}{2 p_n} \right) / 
z^2}, \quad z \in \C \setminus E, z \neq 0,
\end{equation*}
with the main branch of the square root, and $\Phi(0) = 0$; compare 
to~\cite[Thm.~3.1]{SeteLiesen2016}.
\end{example}

\begin{figure}
{\centering
\includegraphics[width=0.48\linewidth]{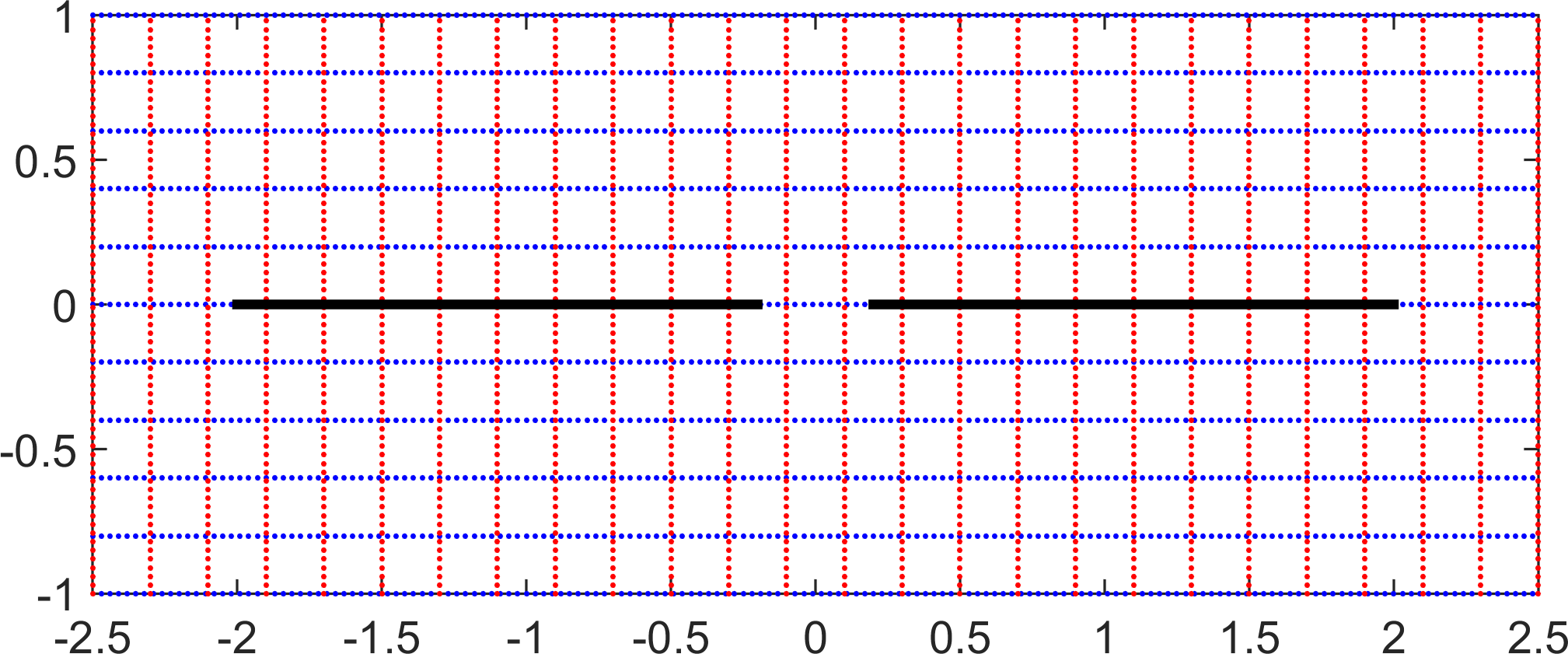}
\includegraphics[width=0.48\linewidth]{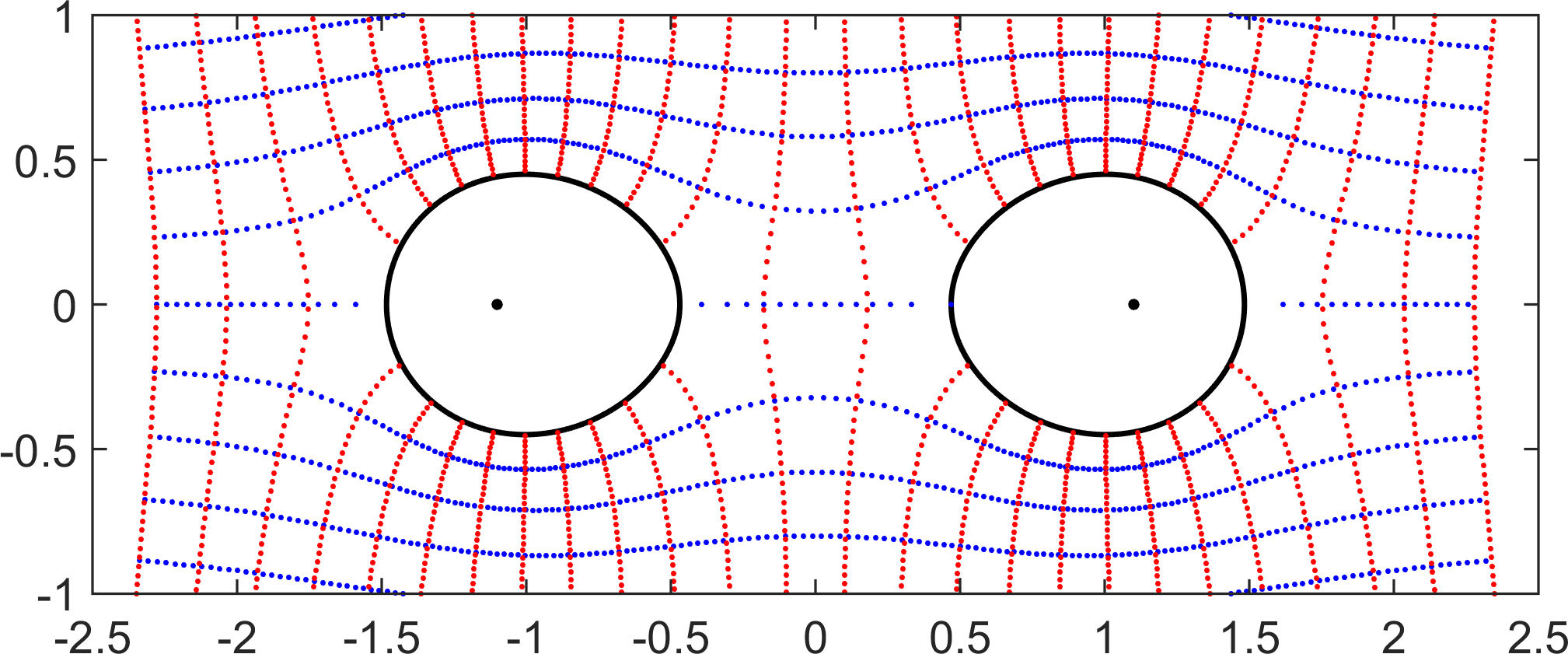}

}
\caption{$E = P^{-1}(\cc{-1, 1}) = \cc{-2, -0.2} \cup \cc{0.2, 2}$ 
in Example~\ref{ex:two_sym_real_intervals}.
Left: $E$ (black) and a grid.
Right: $\partial L$ (black), $a_1, a_2$ (black dots), and the image of the grid under $\Phi$.}
\label{fig:two_sym_real_intervals}
\end{figure}

\begin{example} \label{ex:intersecting_arcs}
Consider the polynomial
\begin{equation*}
P(z) = (z^2 - \alpha^2)^2
\end{equation*}
of degree $n=4$ with parameter $\alpha > 1$ and $E = P^{-1}(\cc{-1, 1})$.
The critical points of $P$ are $z_1 = 0$ and $z_{2,3} = \pm \alpha$ with the
critical values $P(z_1) = \alpha^4 > 1$ and $P(z_{2,3}) = 0$, hence
$z_1 = 0 \notin E$ and $z_{2,3} \in E$ and the assumptions of
Theorem~\ref{thm:double_symmetry} are satisfied.
Since $\pm \alpha \in E$, by Theorem~\ref{thm:double_symmetry} and
Remark~\ref{rem:double_symmetry_cases}\,\ref{it:double_symmetry_case1},
we have the case $E_1^* = E_1$, $E_2^* = E_2$.
Note that $E_1 = - E_2$.
The component $E_2$ is the union of the interval
$\cc{\sqrt{\alpha^2 - 1}, \sqrt{\alpha^2 + 1}}$ and of a Jordan arc symmetric
with respect to the real line with endpoints $b_3 = 
\sqrt{\alpha^2 + \ii}$ and $b_4 = \conj{b}_3 = \sqrt{\alpha^2 - \ii}$
(where $\re(b_3) > 0$) intersecting the interval at the critical point
$\alpha$; see~\cite[Thm.~1]{Schiefermayr2012}.
By~\eqref{eqn:double_symmetry_a2}, the points $a_1, a_2$ are given by
\begin{equation*}
a_2 = \left( \frac{1}{2} \bigl( \alpha^4 + \sqrt{\alpha^8 - 1} \bigr) 
\right)^{1/4},
\quad a_1 = - a_2.
\end{equation*}
The conformal map is given by
\begin{equation*}
\Phi(z)
= \frac{1}{2^{1/4}}
\sqrt{\Bigl( \alpha^4 + \sqrt{\alpha^8 - 1} \Bigr)^{1/2}
+ \Bigl( (z^2 - \alpha^2)^2 + \sqrt{(z^2 - \alpha^2)^4 -1} \Bigr)^{1/2}}.
\end{equation*}
See Figure~\ref{fig:intersecting_arcs} for an illustration.
\end{example}

\begin{figure}
{\centering
\includegraphics[width=0.48\linewidth]{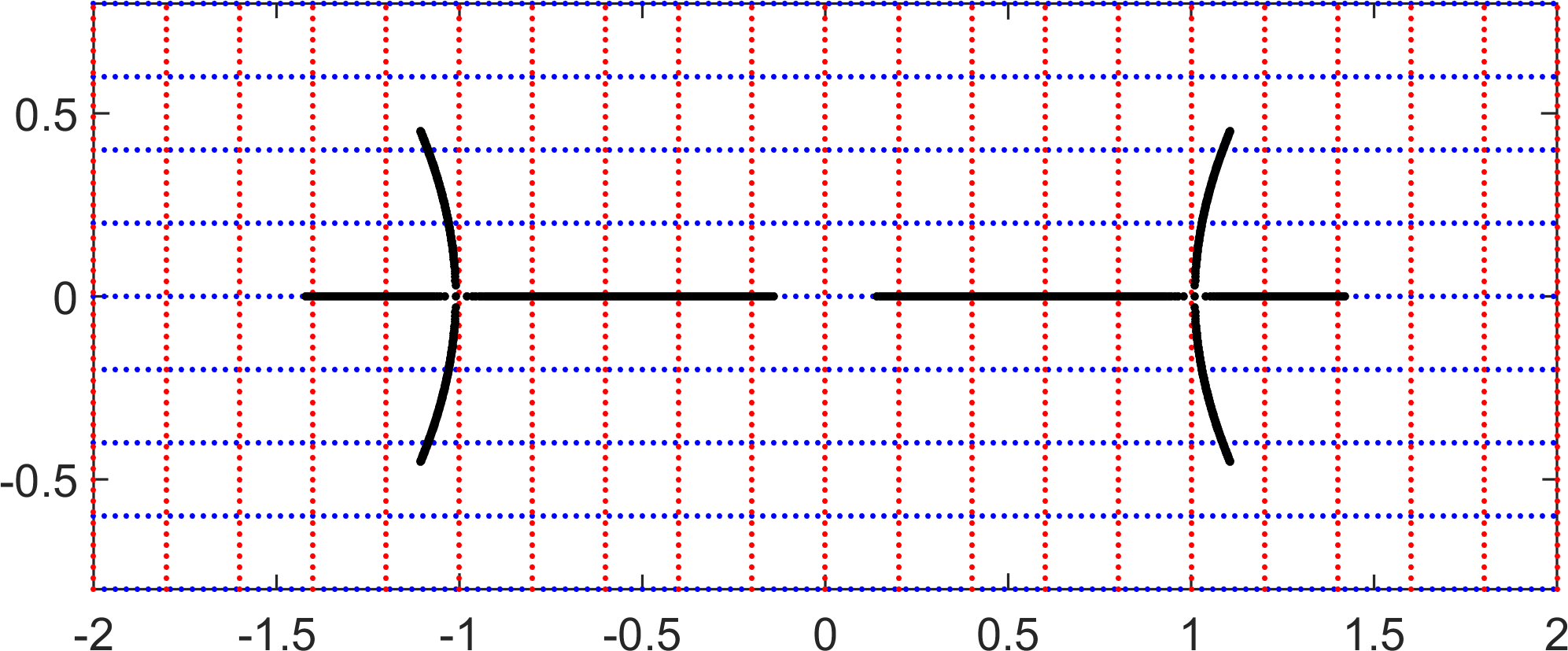}
\includegraphics[width=0.48\linewidth]{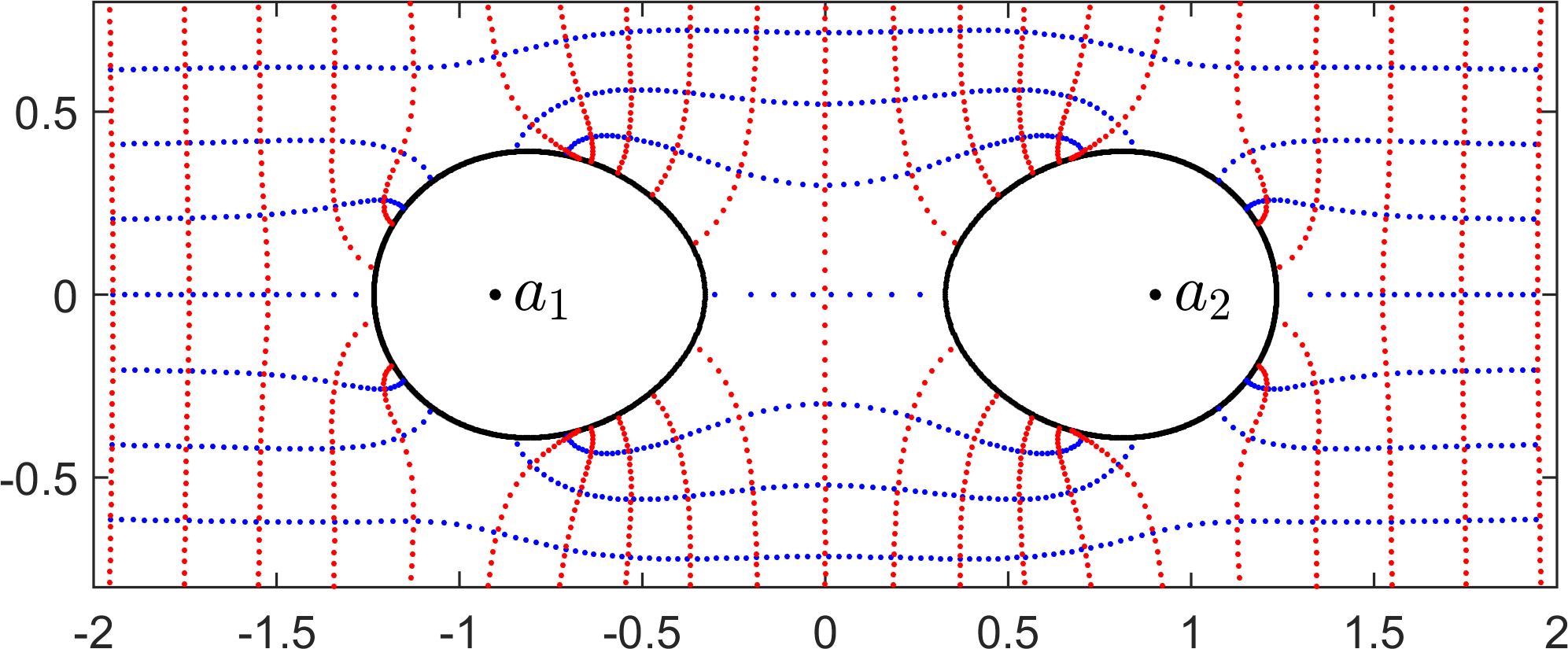}

}
\caption{Pre-image $E = P^{-1}(\cc{-1, 1})$ in 
Example~\ref{ex:intersecting_arcs} with $\alpha = 1.01$.
Left: $E$ (black lines) and a grid.
Right: $\partial L$ (black curves), $a_1, a_2$ (black dots), and the image of the grid under $\Phi$.
}
\label{fig:intersecting_arcs}
\end{figure}

\section{Sets with Three components}\label{sect:three_components}

Let the polynomial $P$ be given as in~\eqref{eqn:P} and assume that $E$ given 
in~\eqref{eqn:poly_preimage} has $\ell = 3$ components, that is, $E = P^{-1}(\cc{-1, 1}) = E_1 \cup 
E_2 \cup E_3$. Then $Q(w)$ in~\eqref{eqn:Q} has the form
\begin{equation} \label{eqn:Q_ell3}
Q(w) = 2 p_n (w - a_1)^{n_1} (w - a_2)^{n_2} (w - a_3)^{n_3}
\end{equation}
with $n_1 + n_2 + n_3 = n$. Moreover, $Q$ has two critical points $w_{1,2}$ in 
$\C \setminus L$ which are the solutions of
\begin{equation}
n_1 (w - a_2) (w - a_3) + n_2 (w - a_1) (w - a_3) + n_3 (w - a_1) (w - a_2) = 0,
\end{equation}
see Theorem~\ref{thm:known_properties}\,\ref{it:known_properties_crit_pts}.  A short computation shows that
\begin{equation}
\begin{split}
w_{1,2} &= \frac{1}{2 n} \bigg( (n_2 + n_3) a_1 + (n_1 + n_3) a_2 + (n_1 + 
n_2) a_3 \\
& \mp \sqrt{ \bigl( (n_2 + n_3) a_1 + (n_1 + n_3) a_2 + (n_1 + n_2) a_3 
\bigr)^2 
- 4 n (n_3 a_1 a_2 + n_2 a_1 a_3 + n_1 a_2 a_3)} \Big).
\end{split} \label{eqn:crit_pts_Q_ell3}
\end{equation}

If all three components $E_1, E_2, E_3$ of $E$ are symmetric with respect to 
$\R$, then
$a_1, a_2, a_3$ are the solution of the non-linear system of equations
\eqref{eqn:ell_relation_crit_pts}--\eqref{eqn:ell_sum_nj_aj}
in Theorem~\ref{thm:ell_components_real_symmetry}, which can be solved
numerically; see the following example.


\begin{example} \label{ex:three_intervals}
Let us construct a polynomial pre-image $E = P^{-1}(\cc{-1, 1})$ consisting
of three real intervals.  By Theorem~\ref{thm:E_ell_intervals}, the set
\begin{equation} \label{eqn:three_intervals_E}
E = \cc{-1, \gamma_1 - \alpha} \cup \cc{\gamma_1 + \alpha, \gamma_2 - \beta} \cup \cc{\gamma_2 + 
\beta, 1},
\end{equation}
is the polynomial pre-image of a polynomial $P$ of degree $n = 3$ if 
and only if $\alpha, \beta, \gamma_1, \gamma_2$ satisfy the equations
\begin{equation}
(-1)^k - (\gamma_1 - \alpha)^k - (\gamma_1 + \alpha)^k + (\gamma_2 - \beta)^k + 
(\gamma_2 + \beta)^k - 1^k = 0
\end{equation}
for $k = 1$ and $k = 2$.  Simplifying this system gives
\begin{equation} \label{eqn:three_intervals_c1_c2}
\gamma_1 = \frac{1}{2} (\alpha^2 - \beta^2 - 1)
\quad \text{and} \quad
\gamma_2 = \frac{1}{2} (\alpha^2 - \beta^2 + 1).
\end{equation}
Hence, for $\alpha, \beta > 0$, $\alpha + \beta < 1$, and $\gamma_1, \gamma_2$
given by~\eqref{eqn:three_intervals_c1_c2}, the 
set~\eqref{eqn:three_intervals_E} is a polynomial pre-image.
By~\eqref{eqn:P_for_E_ell_intervals}, the polynomial of degree $n = 3$ (unique 
up to sign) with $E = P^{-1}(\cc{-1, 1})$ is given by
\begin{equation*}
\begin{aligned}
P(z)
&= -1 - \frac{\bigl((z - \gamma_1)^2 - \alpha^2 \bigr) (z - 1)}{(1 + \gamma_1)^2 - \alpha^2} \\
&= -\frac{z^3 + (\beta^2 - \alpha^2) z^2 + (\gamma_1^2 - \beta^2 - 1) z + \alpha^2 - \beta^2}{(1+\gamma_1)^2 - \alpha^2}.
\end{aligned}
\end{equation*}
In particular,
\begin{equation*}
p_3 = - \frac{1}{(1 + \gamma_1)^2 - \alpha^2}, \quad
p_2 = - \frac{\beta^2 - \alpha^2}{(1 + \gamma_1)^2 - \alpha^2}.
\end{equation*}
The critical points of $P$ are given by
\begin{equation*}
z_{1,2} = \frac{1}{6} \bigl( 2 \alpha^2 - 2 \beta^2 \mp \sqrt{\alpha^4 - 2 
\alpha^2 (\beta^2 - 3) + (\beta^2 + 3)^2} \bigr).
\end{equation*}
Next, we compute $a_1, a_2, a_3$ for the set in~\eqref{eqn:three_intervals_E}.  
Note that $n_1 = n_2 = n_3 = 1$.  
Using~\eqref{eqn:Q_ell3} and~\eqref{eqn:crit_pts_Q_ell3}, we can solve
the system~\eqref{eqn:ell_relation_crit_pts}--\eqref{eqn:ell_sum_nj_aj}
numerically for $a_1$, $a_2$, $a_3$.
Choosing numerical values, say $\alpha = 0.05$ and $\beta = 0.3$, yields
\begin{equation*}
E = [ -1, -\tfrac{19}{20}] \cup [-\tfrac{79}{20}, \tfrac{5}{32}] \cup 
[\tfrac{121}{160}, 1]
\end{equation*}
and
\begin{equation*}
a_1 = -0.7751\ldots, \quad
a_2 = -0.1648\ldots, \quad
a_3 = 0.8525\ldots.
\end{equation*}
The \emph{Mathematica} command \texttt{NSolve} returns six distinct triples
$(a_1, a_2, a_3)$, which are permutations of each other, where only one
satisfies $a_1 < a_2 < a_3$.
Figure~\ref{fig:three_intervals} illustrates the conformal map $\Phi$.
We compute $w = \Phi(z)$ by solving $Q(w) = P(z) + \sqrt{P(z)^2 - 1}$ and
determine
the correct solution $w$ (out of the $n = 3$ solutions) using the mapping
properties of $\Phi$ in Theorem~\ref{thm:mapping_properties_Phi}.
\end{example}

\begin{figure}
{\centering
\includegraphics[width=0.48\linewidth]{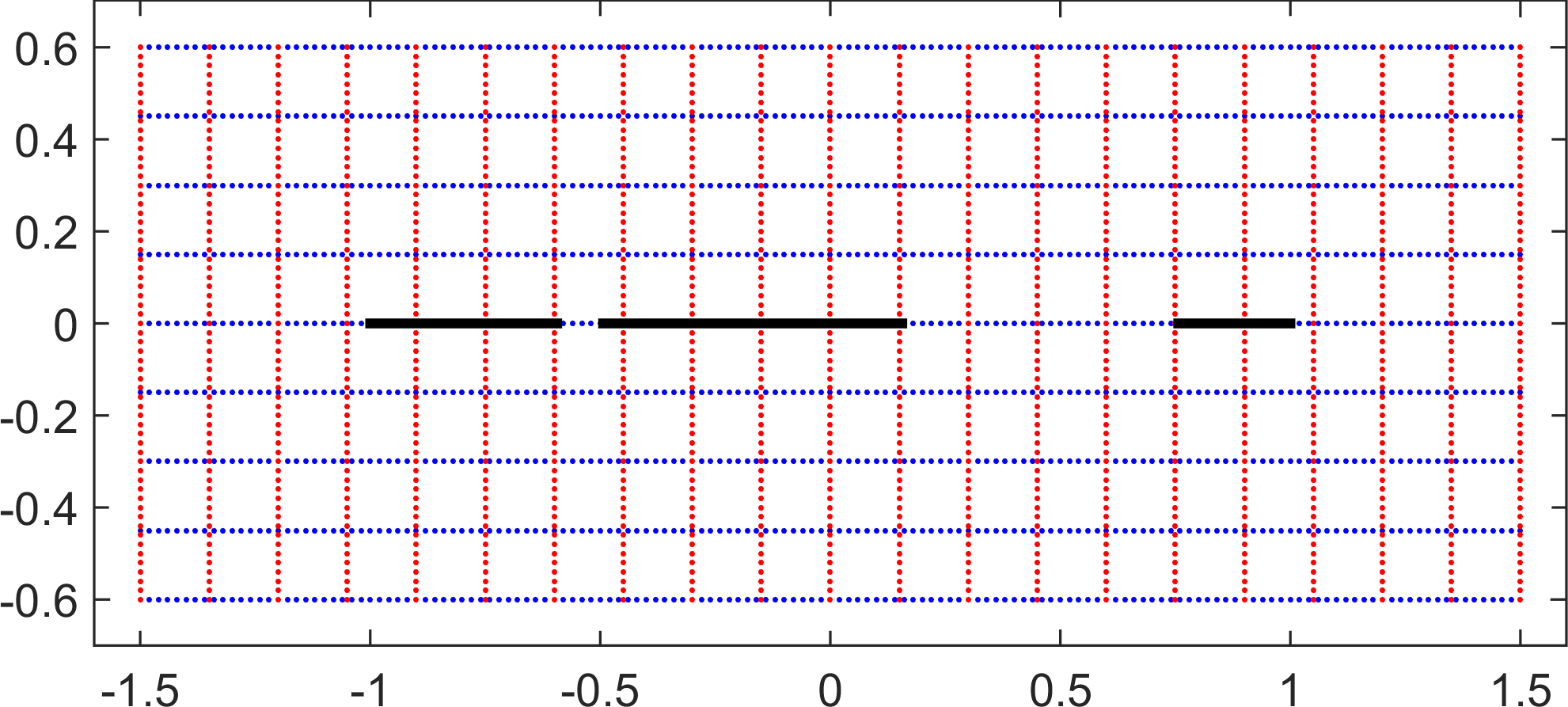}
\includegraphics[width=0.48\linewidth]{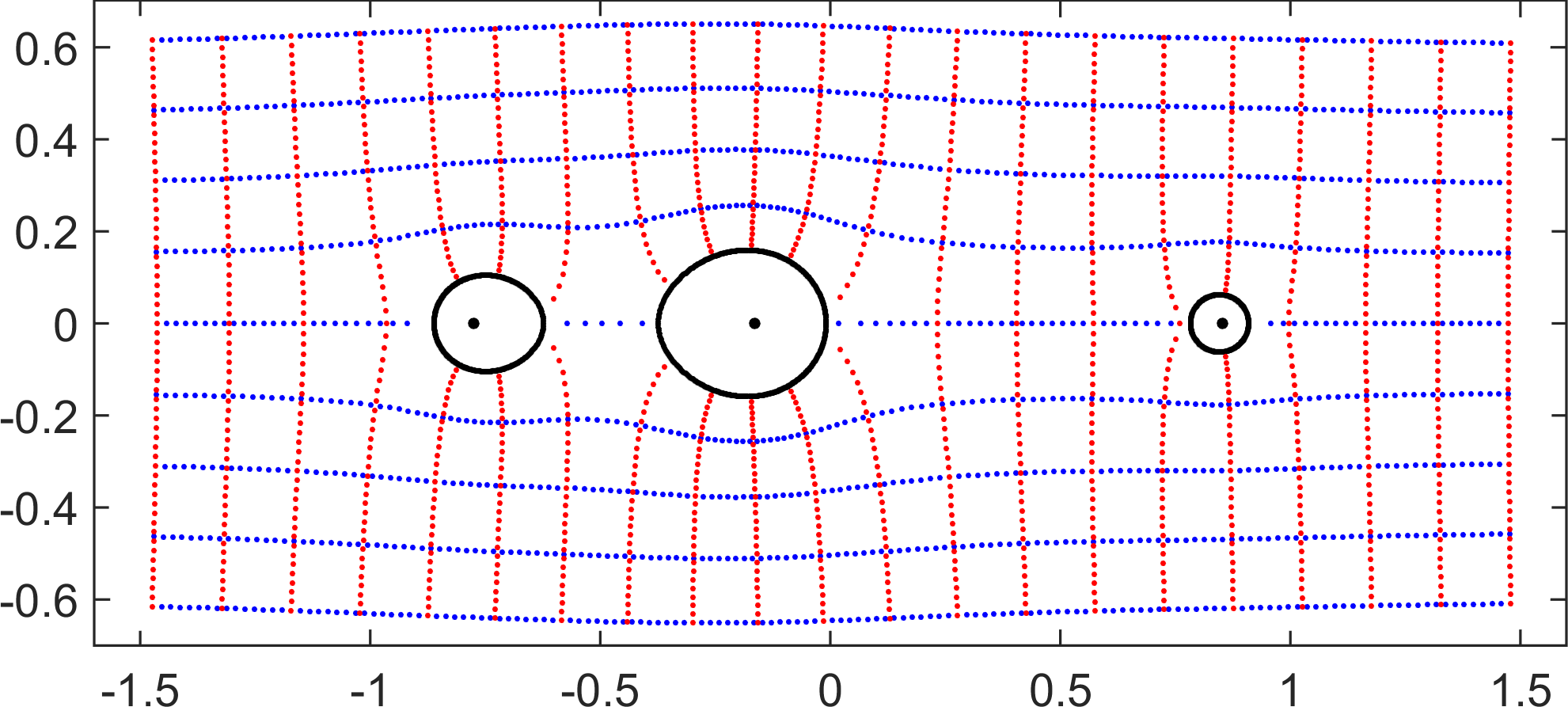}

}
\caption{Left: The set $E$ in Example~\ref{ex:three_intervals} with $\alpha = 
0.05$ and $\beta = 0.3$ in black and a grid.
Right: Corresponding lemniscatic domain ($\partial L$ in black), the points 
$a_1, a_2, a_3$ (black dots), and the image of the grid under $\Phi$.}
\label{fig:three_intervals}
\end{figure}

If $E$ has ``double symmetry'', that is, $E$ is symmetric with respect to zero
and each component is symmetric with respect to $\R$, then the centers
$a_1$, $a_2$, $a_3$ can be given explicitly.


\begin{theorem} \label{thm:ell3_double_symmetry}
Let $E = P^{-1}(\cc{-1, 1}) = E_1 \cup E_2 \cup E_3$ with $E = -E$ and $E_j^* = 
E_j$ for $j = 1, 2, 3$. Let $z_1 < z_2$ be the critical points of $P$ in $\R 
\setminus E$. Then, $z_2 = -z_1$, $n_1 = n_3$, $a_1 < a_2 = 0 < a_3$ with $a_1 = -a_3$ and
\begin{equation} \label{eqn:a3_3components_double_sym}
a_3 = \sqrt{n} \sqrt[n]{
\frac{\abs{P(z_2) + \sqrt{P(z_2)^2 - 1}}}{2 \abs{p_n} n_2^{n_2/2} 
(2n_3)^{n_3}} }
\end{equation}
with the positive real $n$-th root.  In~\eqref{eqn:a3_3components_double_sym},
$z_2$ can also be replaced by $z_1$.
\end{theorem}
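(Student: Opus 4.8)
The plan is to squeeze out all consequences of the two symmetries of $E$ first — this determines $n_1,n_3$, the signs and the ordering of the $a_j$, and the location of the critical points — and then to read off $a_3$ from the critical‑value relation~\eqref{eqn:ell_relation_crit_pts} by a direct computation.

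First I would handle the symmetry. Since $\ell=3$, Theorem~\ref{thm:known_properties}(i) gives exactly two critical points $z_1,z_2$ of $P$ in $\C\setminus E$, and by Theorem~\ref{thm:mapping_properties_Phi}(v) they are real and simple with $z_1\in I_1$, $z_2\in I_2$. By Theorem~\ref{thm:aj_symm_origin} the set $\{z_1,z_2\}$ of critical points of $g_E$ is symmetric about the origin; being distinct and ordered $z_1<z_2$, this forces $z_2=-z_1$. Next, $E=-E$ together with the left‑to‑right labelling of the components forces $E_3=-E_1$ and $E_2=-E_2$, so Theorem~\ref{thm:aj_symm_origin} yields $n_1=n_3$, $a_3=-a_1$, and $a_2=0$. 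Combined with the ordering $a_1<a_2<a_3$ from Theorem~\ref{thm:ell_components_real_symmetry}, this gives $a_1<0<a_3$, and in particular $a_3>0$.

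Next I would substitute $a_1=-a_3$, $a_2=0$, $n_1=n_3$ into~\eqref{eqn:Q_ell3}, obtaining the reduced form $Q(w)=2p_n\,w^{n_2}(w^2-a_3^2)^{n_3}$, and into~\eqref{eqn:eqn_for_crit_pts_of_Q} (equivalently~\eqref{eqn:crit_pts_Q_ell3}); after simplification the two critical points of $Q$ outside $L$ are the roots of $nw^2-n_2a_3^2$, i.e.\ $w_{1,2}=\mp a_3\sqrt{n_2/n}$, which indeed satisfy $a_1<w_1<a_2<w_2<a_3$. Then I evaluate $Q$ at $w_2=a_3\sqrt{n_2/n}$: using $w_2^2-a_3^2=-\tfrac{2n_3}{n}a_3^2$ and the identities $n_2+2n_3=n$ and $\tfrac{n_2}{2}+n_3=\tfrac n2$, a short computation collapses to $Q(w_2)=2p_n(-1)^{n_3}\,n^{-n/2}\,n_2^{n_2/2}(2n_3)^{n_3}\,a_3^{\,n}$. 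By~\eqref{eqn:ell_relation_crit_pts} this equals $P(z_2)+\sqrt{P(z_2)^2-1}$; solving for $a_3^{\,n}$ and using that $a_3$ is real and positive (so $a_3^{\,n}>0$ and the right‑hand side equals its modulus) gives~\eqref{eqn:a3_3components_double_sym}. That $z_2$ may be replaced by $z_1$ then follows from $g_E(z_1)=g_E(z_2)$ (the Green's function is even since $E=-E$; see the proof of Theorem~\ref{thm:aj_symm_origin}), which forces $\abs{P(z_1)+\sqrt{P(z_1)^2-1}}=e^{ng_E(z_1)}=e^{ng_E(z_2)}=\abs{P(z_2)+\sqrt{P(z_2)^2-1}}$.

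The computations are routine; the two places that deserve care are the exponent bookkeeping in the evaluation of $Q(w_2)$ — where the cancellations conspire to produce exactly $n^{n/2}$ — and, more essentially, the step identifying $E_3=-E_1$ rather than some other pairing of components. The latter is the main point to nail down: it rests on the fact that $z\mapsto-z$ maps the leftmost component of $E$ onto the rightmost and is compatible with the left‑to‑right labelling convention, so that negation simultaneously reverses the order of the components $E_j$ and (via Theorem~\ref{thm:aj_symm_origin}) of the centers $a_j$. Everything else is a direct substitution into the results of Section~\ref{sect:background}.
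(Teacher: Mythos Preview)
Your proof is correct and follows essentially the same route as the paper: deduce the symmetry relations $E_1=-E_3$, $E_2=-E_2$, $n_1=n_3$, $a_2=0$, $a_1=-a_3$, $z_2=-z_1$ from Theorems~\ref{thm:aj_symm_origin} and~\ref{thm:symmetric_E}, write $Q(w)=2p_n\,w^{n_2}(w^2-a_3^2)^{n_3}$, compute its critical points $w_{1,2}=\mp a_3\sqrt{n_2/n}$ and the critical values, and solve~\eqref{eqn:ell_relation_crit_pts} for $a_3$ by taking absolute values. The only cosmetic difference is that the paper handles the interchangeability of $z_1$ and $z_2$ by writing the critical-value relation for both $w_1$ and $w_2$ simultaneously, whereas you argue it via $g_E(z_1)=g_E(z_2)$; these are equivalent.
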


\begin{proof}
By Theorem~\ref{thm:symmetric_E}, $E_1 = -E_3$ and $E_2 = -E_2$.  By 
Theorem~\ref{thm:aj_symm_origin}, $z_2 = -z_1$, $n_1 = n_3$, $a_1 = -a_3$ and $a_2 = 0$. 
By~Theorem~\ref{thm:mapping_properties_Phi}\,(v), we have $a_1 < a_2 = 0 < 
a_3$. Therefore, by~\eqref{eqn:Q_ell3}, $Q(w) = 2 p_n w^{n_2} (w^2 - 
a_3^2)^{n_3}$, and, by~\eqref{eqn:crit_pts_Q_ell3}, $w_{1,2} = \mp a_3 
\sqrt{n_2/n}$, and
\begin{equation*}
Q(w_{1,2}) = 2 p_n \left( \mp \sqrt{\frac{n_2}{n}} \right)^{n_2}
\left ( - \frac{2 n_3}{n} \right)^{n_3} a_3^n.
\end{equation*}
Since $Q(w_{1,2}) = P(z_{1,2}) + \sqrt{P(z_{1,2})^2 - 1}$, see 
Theorem~\ref{thm:ell_components_real_symmetry}, and $a_3 > 0$, we obtain
\begin{equation*}
a_3 = \sqrt[n]{\left( \sqrt{\frac{n}{n_2}} \right)^{n_2}
\left( \frac{n}{2 n_3} \right)^{n_3} \frac{1}{2 \abs{p_n}} 
\abs{P(z_2) + \sqrt{P(z_2)^2 - 1}}},
\end{equation*}
which implies~\eqref{eqn:a3_3components_double_sym}.
\end{proof}

\begin{example} \label{ex:three_sym_real_intervals}
Let us construct a polynomial~$P$ of degree~$n=3$ with a pre-image consisting 
of three symmetric intervals 
$E = [-1,-\beta] \cup [-\alpha,\alpha] \cup [\beta,1]$ with $0 < \alpha < \beta 
< 1$.  By Theorem~\ref{thm:E_ell_intervals}, $E$ is a polynomial pre-image if 
and only~\eqref{eqn:system_bk} holds, which is equivalent to 
$\beta = 1-\alpha$.  This implies $0 < \alpha < 1/2$.
By Remark~\ref{rem:E_ell_intervals}\,\ref{it:E_ell_intervals_P},
\begin{equation}
P(z) = -1 + 2 \frac{(z + 1)(z - \alpha) (z - (1 - \alpha))}{(-\beta+1) 
(-\beta-\alpha) (-\beta - (1 - \alpha))}
= \frac{z^3 - (1-\alpha+\alpha^2) z}{\alpha (1-\alpha)}
\end{equation}
satisfies
$P^{-1}([-1,1]) = E = [-1,-(1-\alpha)]\cup[-\alpha,\alpha]\cup[1-\alpha,1]$ with 
$n_1 = n_2 = n_3 = 1$.
The critical points of~$P$ are $z_{1,2} =\mp\sqrt{(1-\alpha+\alpha^2)/3}$.
We have $a_2 = 0$, $a_1 = - a_3$ and, by~\eqref{eqn:a3_3components_double_sym},
\begin{equation*}
a_3 = \sqrt{3} \sqrt[3]{ \frac{\alpha (1-\alpha)}{4} \Bigl\lvert P(z_2) + 
\sqrt{P(z_2)^2 -1} \Bigr\rvert},
\end{equation*}
where a short calculation shows
\begin{equation*}
P(z_2) + \sqrt{P(z_2)^2 - 1} = \frac{2}{3 \sqrt{3}} \frac{(1-\alpha + \alpha^2)^{3/2}}{\alpha 
(1-\alpha)} + \frac{2 - 3 \alpha - 3 \alpha^2 + 2 \alpha^3}{3 \sqrt{3} \alpha 
(1-\alpha)}.
\end{equation*}
This yields the explicit formula
\begin{equation*}
a_3 = \sqrt[3]{\frac{1}{2} (1 - \alpha + \alpha^2)^{3/2} + \frac{1}{4} (2 - 3 
\alpha - 3 \alpha^2 + 2 \alpha^3)},
\end{equation*}
with positive real roots.  Since $a_2 = 0$, $a_1 = - a_3$, and $n_1 = n_2 = n_3 
= 1$, we have explicitly determined the canonical domain $\comp{L}$.

In the limiting case $\alpha \to 0$, the set $E$ degenerates to
$\{ -1, 0, 1 \}$ and $a_3 \to 1$.  In the other limiting case
$\alpha \to 1/2$, the set $E$ tends to the interval $\cc{-1, 1}$
and
\begin{equation*}
\lim_{\alpha \to 1/2} a_3 = \frac{\sqrt{3}}{2 \sqrt[3]{2}}
= 0.6874\ldots.
\end{equation*}
Thus, the set $L$ ``converges'' to
$\{ w \in \C : \abs{w} \cdot \abs{w^2 - 3 \cdot 2^{-8/3}} \leq 1/8 \}$,
whose boundary is a double figure eight self-intersecting at $\pm 2^{-4/3}$.
As observed in Example~\ref{ex:P3_two_intervals}, the centers of $L$
do not converge to the center $0$ of the lemniscatic domain
$\{ w \in \C : \abs{w} \leq 1/2 \}$ corresponding to $\cc{-1, 1}$.
This shows again that a discontinuity in the connectivity
(in this example from $\ell = 3$ components to $\ell = 1$ component)
leads to a discontinuity of the centers.

Figure~\ref{fig:three_sym_real_intervals} illustrates the sets $E$ and $L$,
and the conformal map $\Phi$ 
for several values of $\alpha$.  For $z \in \C \setminus E$, we determine $w 
= \Phi(z)$ by solving the polynomial equation $Q(w) = P(z) + \sqrt{P(z)^2 - 1}$,
where the correct value is determined with the help of the mapping properties of 
$\Phi$ in Theorem~\ref{thm:mapping_properties_Phi}.
\end{example}

\begin{figure}[t!]
{\centering
\includegraphics[width=0.48\linewidth]
{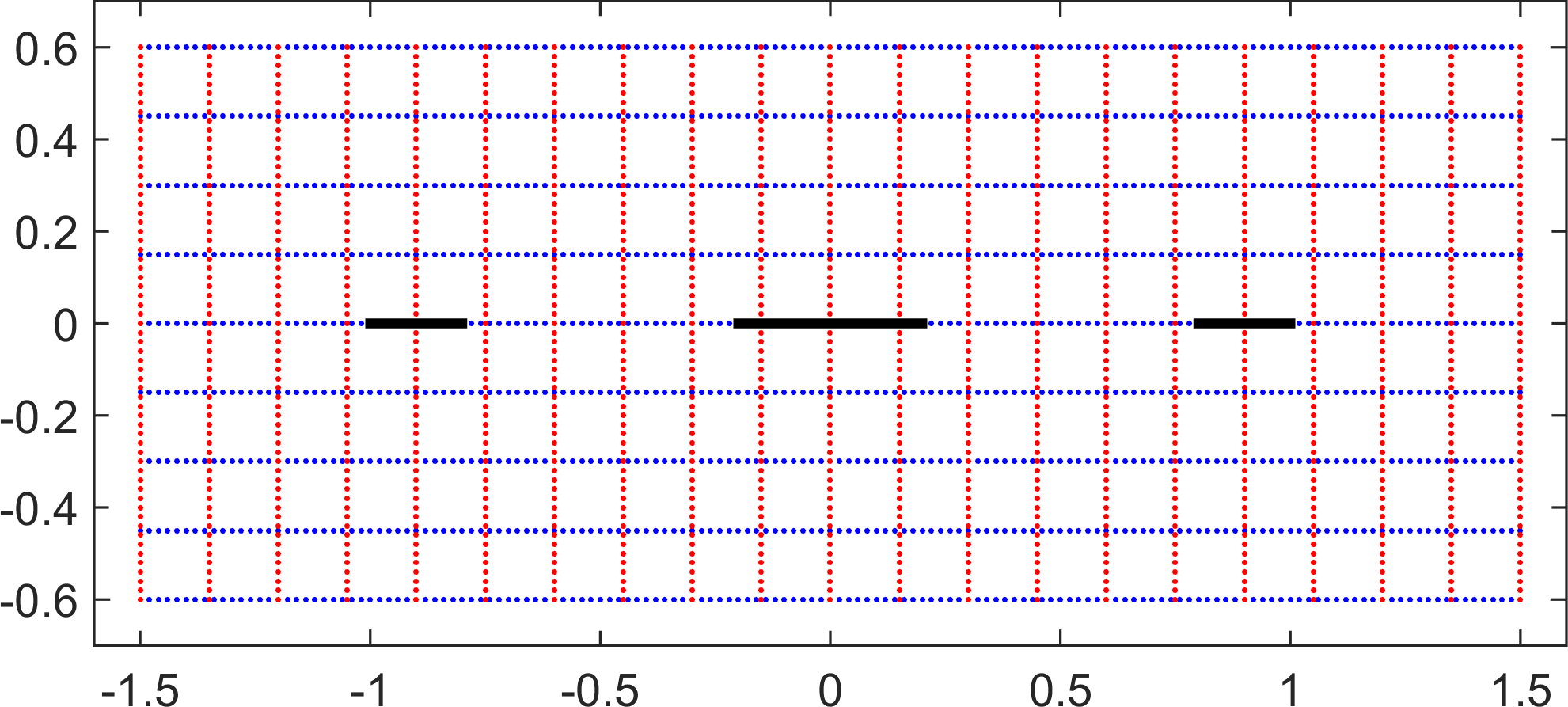}
\includegraphics[width=0.48\linewidth]
{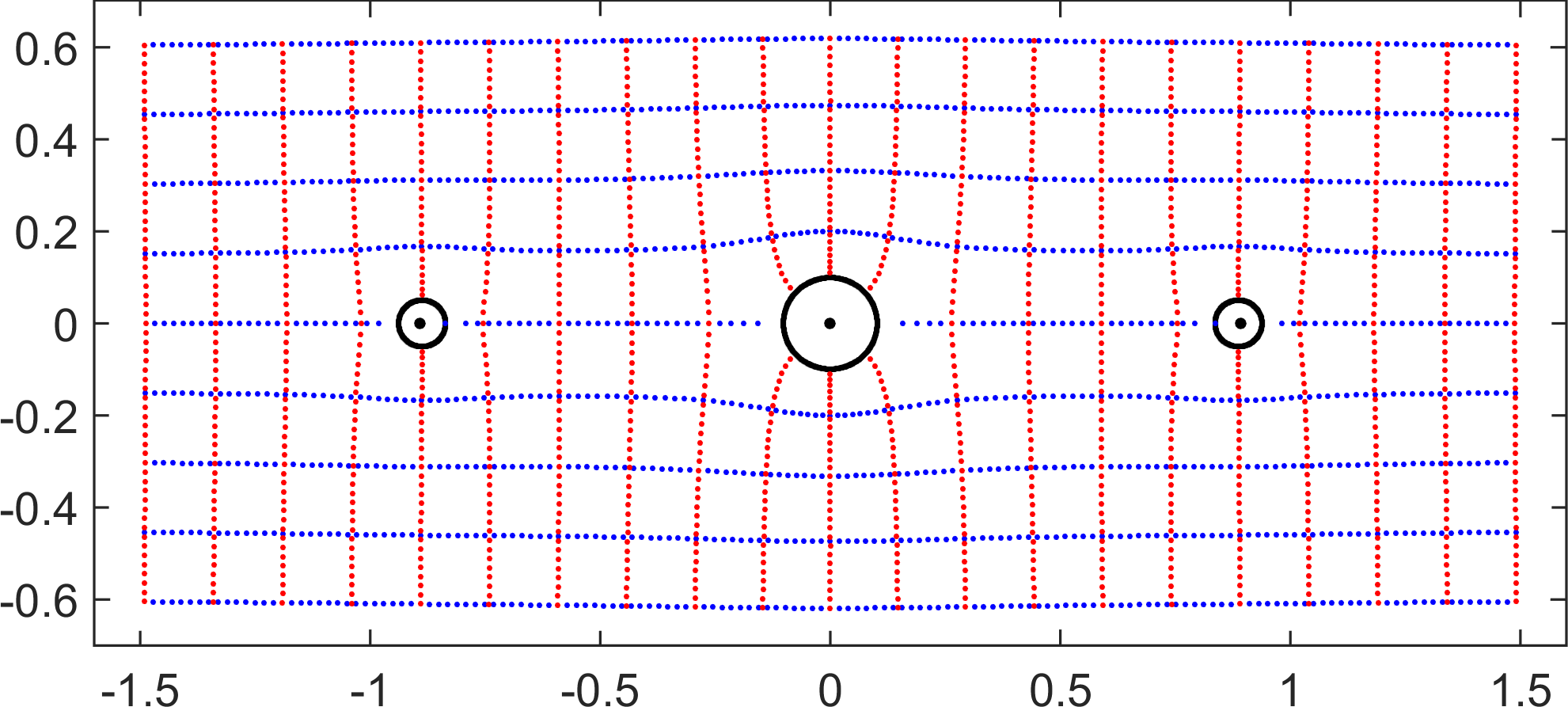}

\includegraphics[width=0.48\linewidth]
{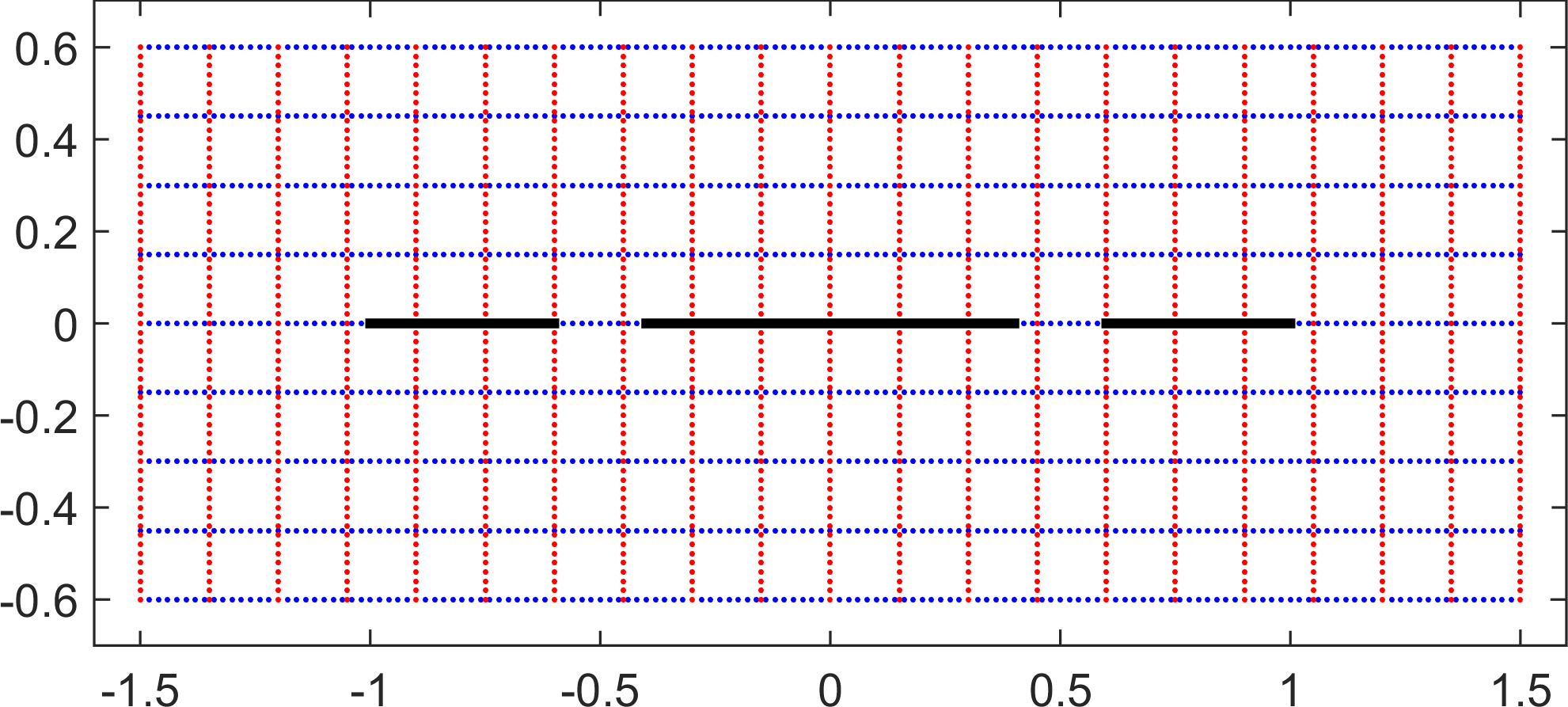}
\includegraphics[width=0.48\linewidth]
{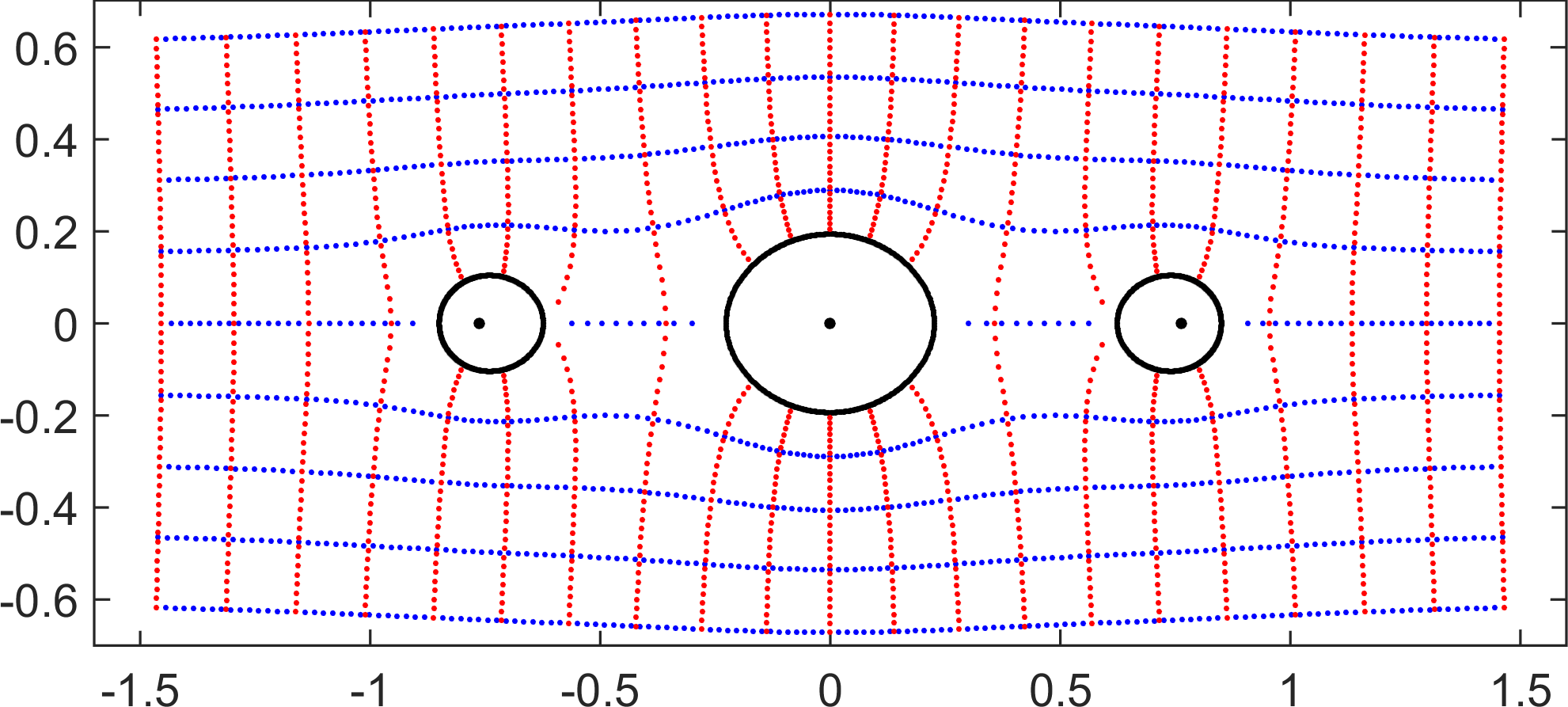}

\includegraphics[width=0.48\linewidth]
{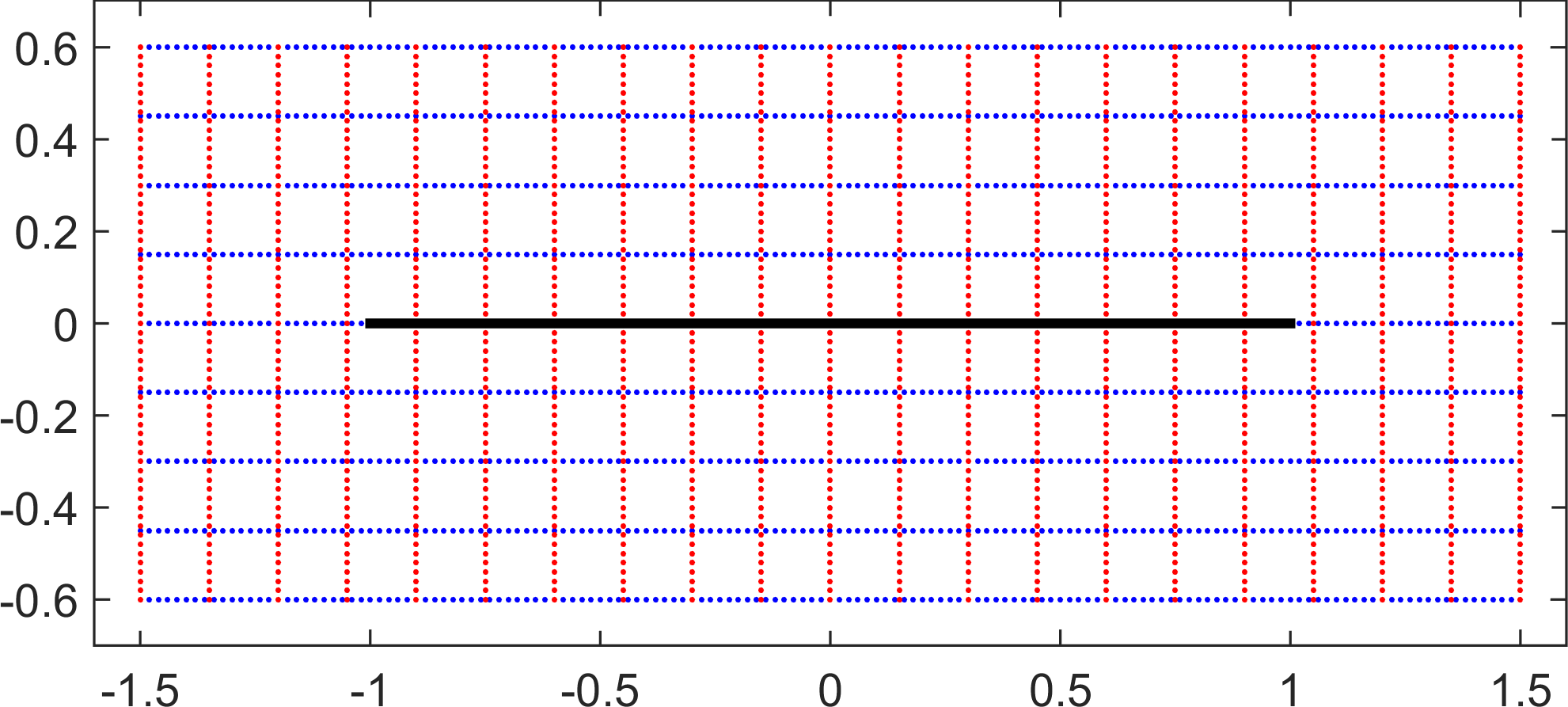}
\includegraphics[width=0.48\linewidth]
{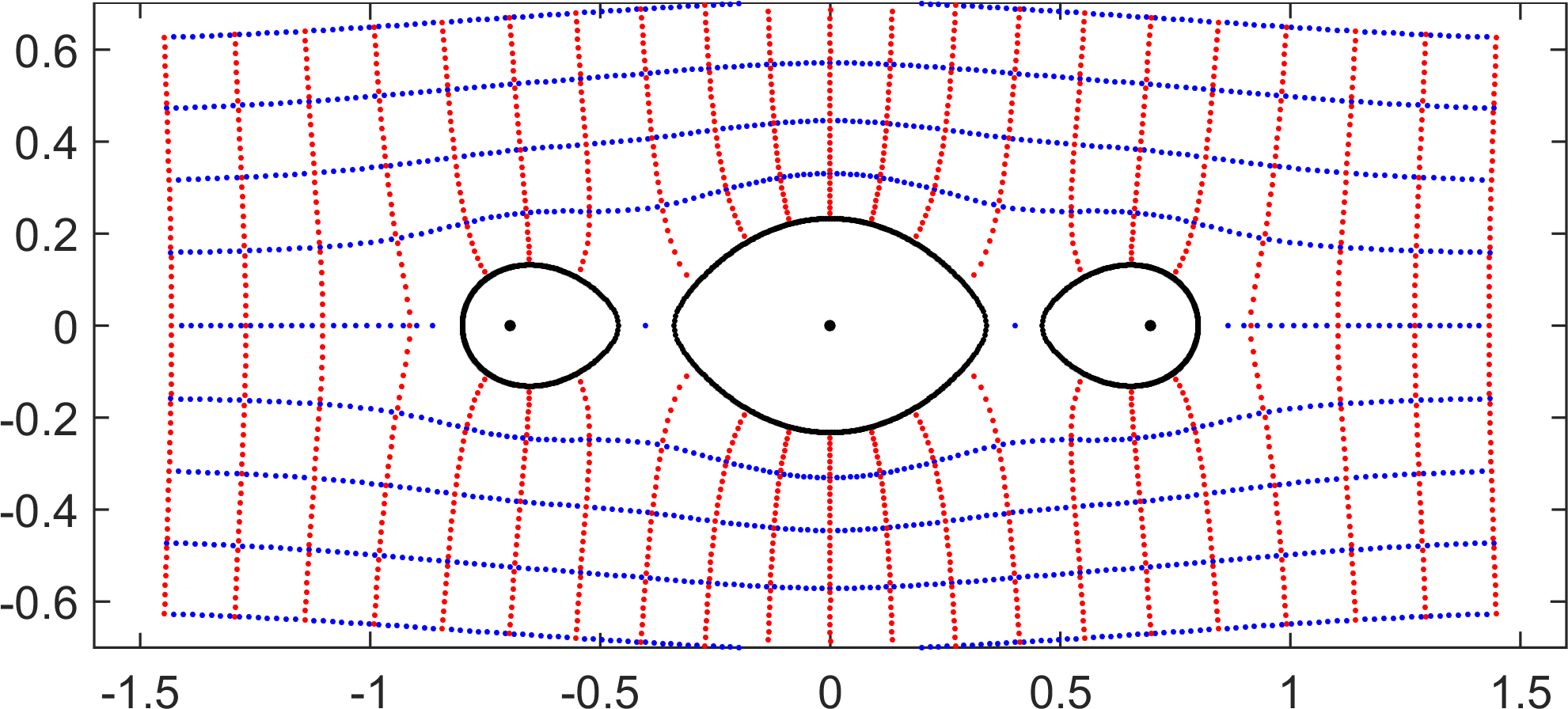}

}
\caption{Set $E$ in Example~\ref{ex:three_sym_real_intervals} with 
$\alpha = 0.2, 0.4, 0.49$ (from top to bottom).
Left: $E$ (black lines) and grid.  Right: $\partial L$ (black curves), $a_1, 
a_2, a_3$ (black dots) and the image under $\Phi$ of the grid.}
\label{fig:three_sym_real_intervals}
\end{figure}

\section{Several Intervals}
\label{sect:algorithm}

In this section, we consider the case that $E = P^{-1}(\cc{-1, 1})$ consists of 
$\ell$ components,
each symmetric with respect to the real line.  This includes in particular
the important case when $E$ consists of $\ell$ real intervals.

For a given polynomial $P$ with critical points $z_1, \ldots, z_{\ell-1} \in \C \setminus E$,
we want to compute the centers $a_1 < \ldots < a_\ell$ of $L$ with the help 
of~\eqref{eqn:ell_relation_crit_pts} and~\eqref{eqn:ell_sum_nj_aj}
in Theorem~\ref{thm:ell_components_real_symmetry}.
In other words, we are looking for the polynomial
\begin{equation*}
Q(w) = 2 p_n (w - a_1)^{n_1} \cdot \ldots \cdot (w - a_\ell)^{n_\ell},
\end{equation*}
where $n_1, \ldots, n_\ell$ are known (these are the number of zeros of $P$ in 
$E_1, \ldots, E_\ell$, see Theorem~\ref{thm:known_properties}\,(ii)),
such that the critical values of $Q$ at the critical points $w_1, \ldots, 
w_{\ell-1} \in \C \setminus L$ are given;
see~\eqref{eqn:ell_relation_crit_pts}.  Additionally, $a_1, \ldots, a_\ell$
must satisfy~\eqref{eqn:ell_sum_nj_aj}.

In his 1961 paper~\cite{Kammerer1961}, Kammerer considered a similar problem and
gave an algorithm for computing the (real) polynomial of degree $n$ with
$n-1$ prescribed oscillating critical values (and additionally two interpolatory 
conditions).
Existence and uniqueness of such a polynomial has been proved 
in~\cite{Davis1957};
see also~\cite{BeardonCarneNg2002, Kristiansen1984, Kuhn1969, 
MycielskiPaszkowski1960, Thom1965}
for further results on polynomials with prescribed critical values.
In~\cite[Sect.~6]{PeherstorferSchiefermayr1999}, Peherstorfer and the first
author provided a
modification of Kammerer's algorithm for computing a polynomial
pre-image consisting of $\ell$ intervals.

In the following, we modify Kammerer's algorithm for the computation of $Q$, that is,
for the computation of $a_1, \ldots, a_\ell$.  For the sake of simplicity,
let us concentrate first on the case of $\ell$ real intervals.

Let the polynomial $P$ be as in~\eqref{eqn:P} and such that
\begin{equation}
E=P^{-1}([-1,1])=[b_1,b_2]\cup[b_3,b_4]\cup\ldots\cup[b_{2\ell-1},b_{2\ell}]
\end{equation}
with $b_1<b_2<\ldots<b_{2\ell}$. Let $n_j$ be the number of zeros of $P$ in 
$E_j \coloneq [b_{2j-1},b_{2j}]$, $j=1,\ldots,\ell$, and let 
$z_1, \ldots, z_{\ell-1}$ be the critical points of $P$ outside~$E$ 
which satisfy $b_{2j} < z_j < b_{2j+1}$, $j = 1, \ldots, \ell-1$; see 
Theorem~\ref{thm:mapping_properties_Phi}\,\ref{it:known_properties_crit_pts}.  
The following algorithm 
gives a procedure for numerically computing 
the critical points $w_1,\ldots,w_{\ell-1}$ of $Q$ 
and the centers $a_1, \ldots, a_{\ell}$ of $L$
satisfying~\eqref{eqn:ell_relation_crit_pts} and~\eqref{eqn:ell_sum_nj_aj}
in Theorem~\ref{thm:ell_components_real_symmetry}.

\begin{algorithm} ~ \\
Initial values:
\begin{equation*}
\begin{aligned}
a_j^{[0]}&=\tfrac{1}{2}(b_{2j-1}+b_{2j}), \qquad j=1,\ldots,\ell \\
w_j^{[0]}&=\tfrac{1}{2}(b_{2j}+b_{2j+1}), \qquad j=1,\ldots,\ell-1
\end{aligned}
\end{equation*}
FOR $k = 0, 1, 2, \ldots$ DO
\begin{enumerate}
\renewcommand{\labelenumi}{\arabic{enumi}.}
\item Using $a_1^{[k]}, \ldots, a_\ell^{[k]}$ as initial values, compute 
$a_1^{[k+1]},\ldots,a_{\ell}^{[k+1]}$ such that
\begin{align*}
Q^{[k+1]}(w_j^{[k]}) &= P(z_j)+\sqrt{P^2(z_j)-1}, \quad j = 1, \ldots, \ell-1, 
\\
\sum_{j=1}^{\ell}n_ja_j^{[k+1]} &= -\frac{p_{n-1}}{p_n},
\end{align*}
where
\[
Q^{[k+1]}(w) 
= 2p_n\bigl(w-a_1^{[k+1]}\bigr)^{n_1}\cdot\ldots\cdot\bigl(w-a_{\ell}^{[k+1]}
\bigr)^{n_{\ell}}.
\]
\item Compute the solutions $w_1^{[k+1]},\ldots,w_{\ell-1}^{[k+1]}$ 
of the equation
\begin{equation*}
\sum_{i=1}^\ell n_i \prod_{\substack{j=1 \\ j \neq i}}^\ell \bigl( 
w-a_j^{[k+1]} \bigr) = 0.
\end{equation*}
\end{enumerate}
ENDFOR
\end{algorithm}

\begin{remark}
The algorithm also works for more general sets $E = \cup_{j=1}^n E_j$ 
with $E_j^* = E_j$ for $j = 1, \ldots, \ell$.  In this case, let us define $b_1, 
\ldots, b_{2\ell}$ by
\begin{equation}
E_j \cap \R = \cc{b_{2j-1}, b_{2j}}, \quad j = 1, \ldots, \ell.
\end{equation}
Note that $E_j \cap \R$ is indeed a point or an interval;
see~\cite[Lem.~A.2]{SchiefermayrSete2022}.
In particular, $b_1 \leq b_2 < b_3 \leq b_4 < \ldots < b_{2\ell-1} \leq b_{2 
\ell}$.  Then the algorithm works with the same initial values.
\end{remark}

In our MATLAB implementation of the above algorithm, we solve the nonlinear
system of equations for $a_1^{[k+1]}, \ldots$, $a_\ell^{[k+1]}$ in Step~1
using a Newton iteration, and compute the new values $w_1^{[k+1]},
\ldots, w_{\ell-1}^{[k+1]}$ in Step~2 using MATLAB's \verb|roots|
command.
We stop the iteration when
\begin{equation*}
\abs{a_j^{[k+1]} - a_j^{[k]}} < \operatorname{abstol} + \operatorname{reltol} \cdot \abs{a_j^{[k]}} \quad \text{for all } j = 1, \ldots, \ell,
\end{equation*}
where we have chosen $\operatorname{abstol} = \operatorname{reltol} = 10^{-13}$.
We performed the following experiments in MATLAB R2014b.

\begin{example} \label{ex:revisited}
We apply our algorithm to all previous examples.
Table~\ref{tab:algo} lists the number of iteration steps in our algorithm until convergence.
For those sets where $a_1, \ldots, a_\ell$ are known explicitly,
also the maximal error $\max_{j=1, \ldots, \ell} \abs{a_j^{[k]} - a_j}$
in the final step is reported.
We observe that the algorithm converges in very few iteration steps
in all examples.
Moreover, in the examples where $a_j$ are known explicitly, the
algorithm terminates with an error close to machine precision,
which suggests that the algorithm is highly accurate.
In Example~\ref{ex:two_sym_real_intervals}, the initial guess is already the exact solution and the iteration stops in its first step.
Figure~\ref{fig:convergence} shows the error curves $\abs{a_j^{[k]} -
a_j}$ for Example~\ref{ex:P3_two_intervals} and
Example~\ref{ex:three_sym_real_intervals}.

In Example~\ref{ex:three_intervals}, the exact values of $a_1$, $a_2$, $a_3$ 
are not known.  For a comparison, we first map $\comp{E}$ onto a
region bounded by Jordan curves (through the successive application of inverse Joukowski maps to the intervals) and then apply the numerical
boundary integral equation (BIE) method from~\cite{NasserLiesenSete2016}, which computes the centers
$a_1, \ldots, a_\ell$, the exponents $m_1, \ldots, m_\ell$, the capacity
$\capacity(E)$ and the conformal map $\Phi$.
The difference between the values $a_j$ computed by the two methods
is close to machine precision, suggesting that both methods are very accurate.
\end{example}

\begin{table}[t]
{\centering
\begin{tabular}{ccc}
\toprule
Example & iter. steps & max. error $\abs{a_j^{[k]} - a_j}$ \\
\midrule
\ref{ex:P3_two_intervals} & $4$ & $3.6637 \cdot 10^{-15}$ \\
\ref{ex:two_sym_real_intervals} & $0$ & $0$ \\
\ref{ex:intersecting_arcs} & $1$ & $0$ \\
\ref{ex:three_intervals} & $4$ & --- \\
\ref{ex:three_sym_real_intervals} & $4$ & $1.1102 \cdot 10^{-16}$ \\
\bottomrule
\end{tabular}

}
\caption{Number of iteration steps until convergence and final maximal error 
between the computed and exact values of $a_1, \ldots, a_\ell$, where 
available; see Example~\ref{ex:revisited}.}
\label{tab:algo}
\end{table}

\begin{figure}
{\centering
\includegraphics[width=0.48\linewidth]{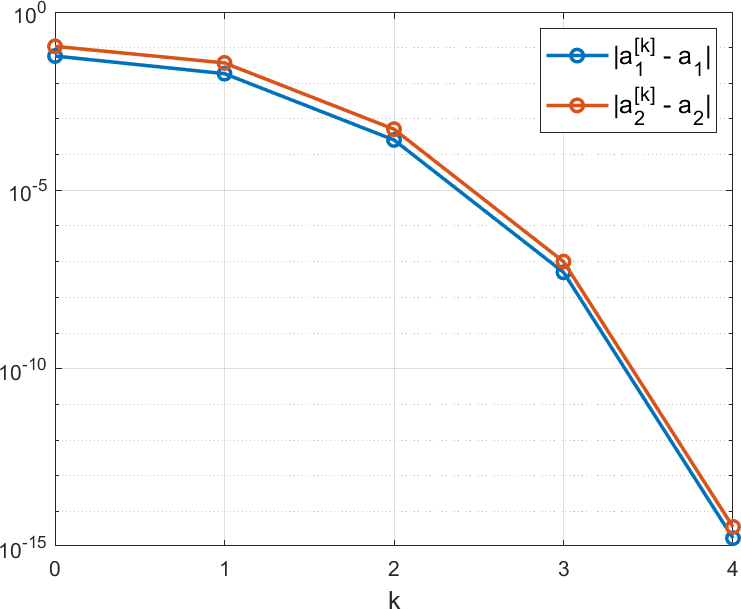}
\includegraphics[width=0.48\linewidth]{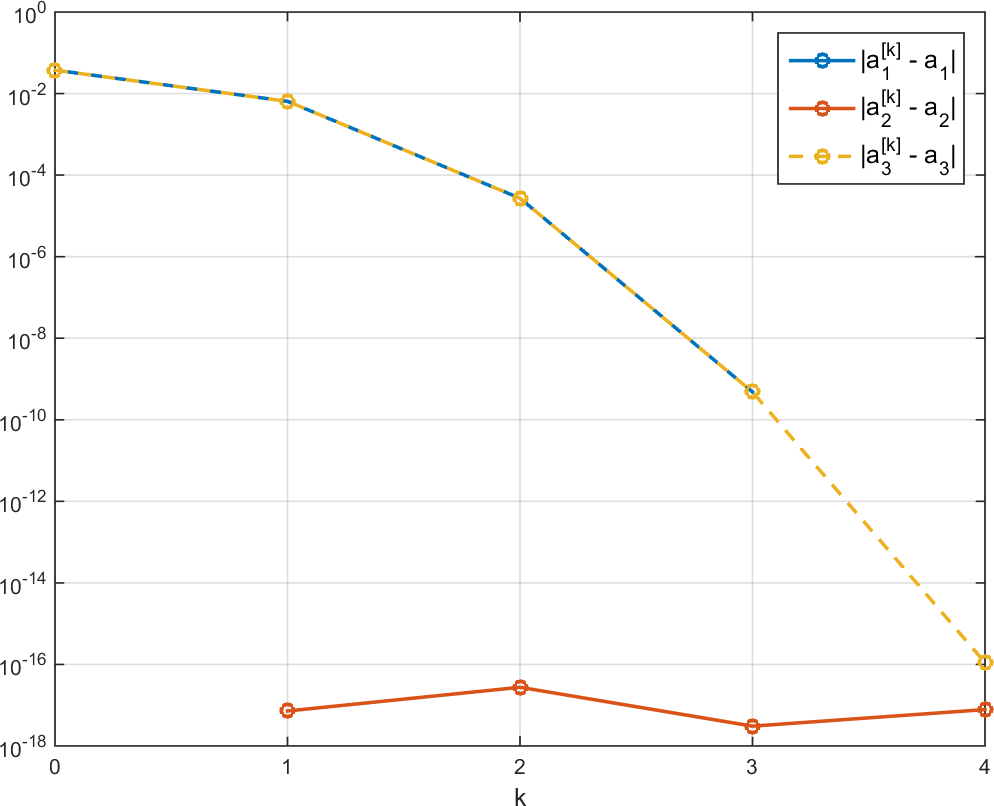}

}
\caption{Error curves $\abs{a_j^{[k]} - a_j}$ for the sets $E$ from 
Example~\ref{ex:P3_two_intervals} (left)
and~\ref{ex:three_sym_real_intervals} (right).
Missing dots mean that the error is exactly zero.}
\label{fig:convergence}
\end{figure}

\begin{example} \label{ex:deg7_ell4}
The polynomial $P$ shown in Figure~\ref{fig:E_ell_intervals} is
\begin{align*}
P(z) &= -75.60176146228515 z^7 -6.112631353464664 z^6 + 130.38101983617594 z^5 
\\
&\phantom{=} + 7.91625847587283 z^4 - 63.44786532361087 z^3 
- 1.793124210064775 z^2 \\
&\phantom{=} + 7.668606949720056 z -0.010502912343433701.
\end{align*}
The set $E = P^{-1}(\cc{-1, 1})$ consists of $\ell = 4$ intervals while $n = 
\deg(P) = 7$.
The numbers of zeros of $P$ in the intervals are $n_1 = 2$, $n_2 = 1$, $n_3 = 
3$, $n_4 = 1$, which can also be seen from Figure~\ref{fig:E_ell_intervals}.
Our algorithm converges after $5$ iteration steps to the values
\begin{align*}
a_1 &= -0.807906463544657 & a_3 &= 0.341284084426686 \\
a_2 &= -0.367217238438923 & a_4 &= 0.878324884021925.
\end{align*}
For comparison, we compute $a_1, \ldots, a_4$ by adapting the BIE method 
from~\cite{NasserLiesenSete2016} as described in Example~\ref{ex:revisited}.
The difference of the computed values is of order $10^{-15}$, suggesting that 
both methods are very accurate.

With the obtained values $a_1, \ldots, a_4$, the conformal map $w = \Phi(z)$ is 
evaluated as follows.
We solve the polynomial equation $Q(w) = P(z) + \sqrt{P(z)^2 - 1}$, see 
Theorem~\ref{thm:known_properties}\,\ref{it:relation_between_maps}, and
determine the correct value of $w$ using the mapping
properties of $\Phi$ given in Theorem~\ref{thm:mapping_properties_Phi}.
Figure~\ref{fig:deg7_ell4} shows the sets $E$ and $L$ and the image of a grid 
under the conformal map $\Phi$.
\end{example}

\begin{figure}
{\centering
\includegraphics[width=0.48\linewidth]{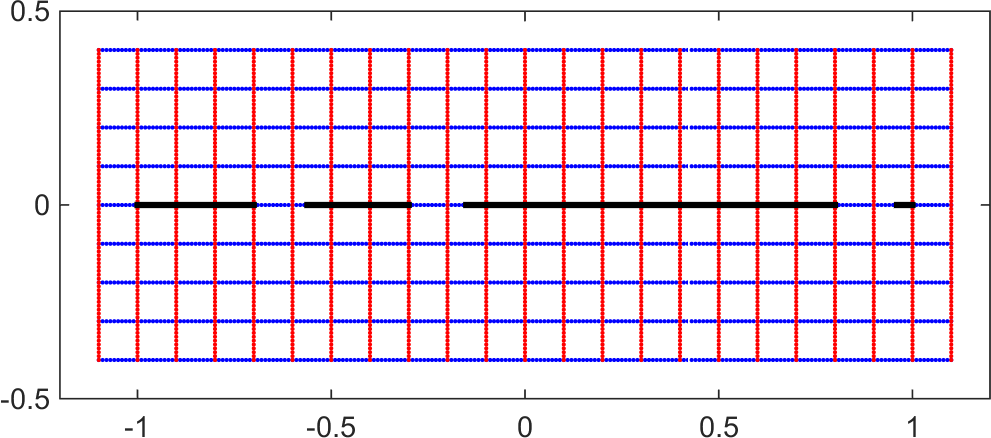}
\includegraphics[width=0.48\linewidth]{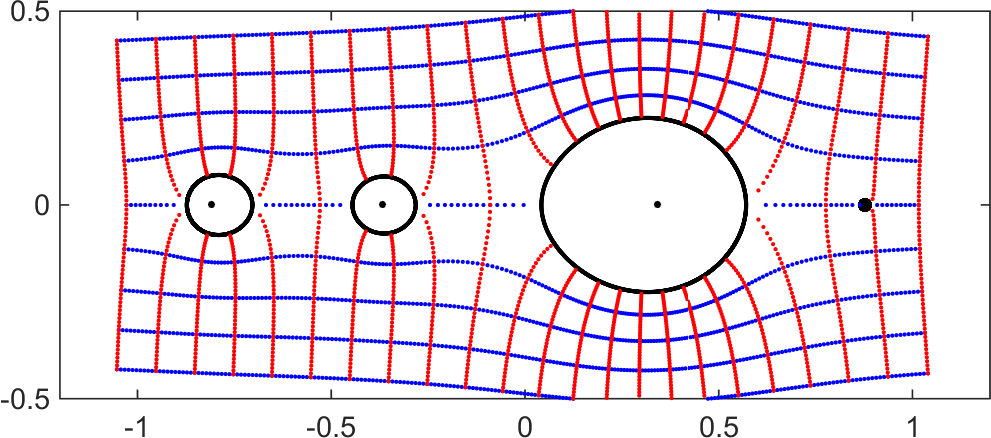}

}
\caption{Left: Polynomial pre-image $E = P^{-1}(\cc{-1, 1})$ with $\ell
= 4$ intervals (black) and $\deg(P) = 7$ from Example~\ref{ex:deg7_ell4} 
and a grid.
Right: $\partial L$ (black curves), $a_1, \ldots, a_4$ (black dots),
and the image of the grid under the conformal map $\Phi$.}
\label{fig:deg7_ell4}
\end{figure}

\begin{example} \label{ex:five_intervals}
Consider the polynomial
\begin{equation*}
P(z) = 26 z^5 + 5 z^4 - 32 z^3 - 5 z^2 + 7 z + \frac{1}{2},
\end{equation*}
for which $E = P^{-1}(\cc{-1, 1})$ consists of $\ell = 5$ intervals;
see Figure~\ref{fig:five_intervals}.  Here, $n_1 = \ldots = n_5 = 1$.
The centers $a_1, \ldots, a_5$ of $L$ are not known explicitly.
Our algorithm converges in $4$ steps to the values
\begin{align*}
a_1 &= -0.957893296657925, & a_4 &= 0.464367835203743, \\
a_2 &= -0.570567929561560, & a_5 &= 0.951037765762270. \\
a_3 &= -0.079252067054220,
\end{align*}
For comparison, we compute $a_1, \ldots, a_5$ also
by adapting the BIE method from~\cite{NasserLiesenSete2016} as 
described in Example~\ref{ex:revisited}.
The difference of the computed results is of the order $10^{-14}$, i.e.,
both results agree up to almost machine precision.

The computed values $a_1, \ldots, a_5$ are used to compute numerically the 
conformal map~$\Phi$ as described in Example~\ref{ex:deg7_ell4}.
Figure~\ref{fig:five_intervals} illustrates the conformal map.
The difference of the values of $\Phi$ on the grid computed by this
method and by the BIE method from~\cite{NasserLiesenSete2016} is
of the order $10^{-14}$.  We have again a very good agreement
between both methods, which suggests that both methods are very
accurate.
\end{example}

\begin{figure}
{\centering
\includegraphics[width=0.48\linewidth]{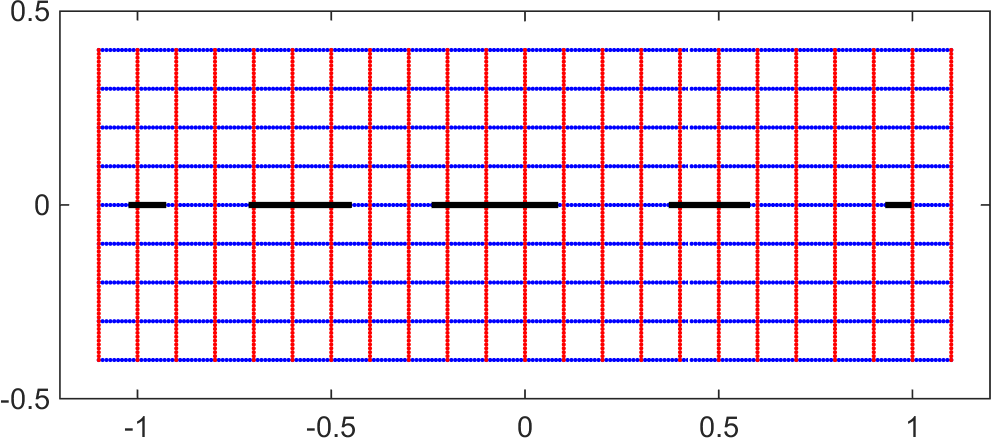}
\includegraphics[width=0.48\linewidth]{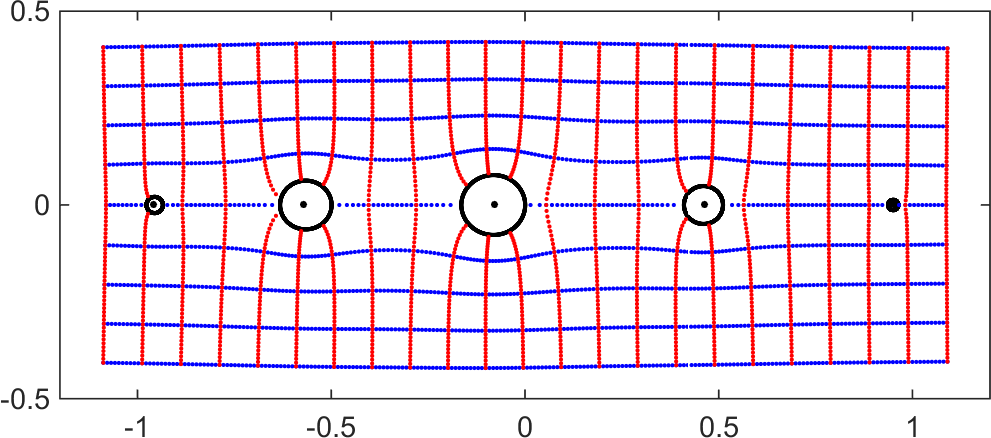}

}
\caption{Left: Polynomial pre-image $E = P^{-1}(\cc{-1, 1})$ with $\ell
= 5$ intervals (black) from Example~\ref{ex:five_intervals} and a grid.
Right: $\partial L$ (black curves), $a_1, \ldots, a_5$ (black dots),
and the image of the grid under the conformal map $\Phi$.}
\label{fig:five_intervals}
\end{figure}

\begin{example}\label{ex:ten_intervals}
Let $P(z) = \alpha T_{10}(z)$, where $\alpha = 1.05$ and $T_{10}$ is the
Chebyshev polynomial of the first kind of degree $n = 10$.
Then $E = P^{-1}(\cc{-1, 1})$ consists
of $\ell = 10$ intervals; see Figure~\ref{fig:ten_intervals}.
Similarly to Example~\ref{ex:five_intervals}, we compute $a_1, \ldots,
a_{10}$ with our algorithm, which converged in $5$ iteration steps.
In addition, we compute the conformal map $\Phi$,
illustrated in Figure~\ref{fig:ten_intervals}.
We repeated the same numerical experiment with $P(z) = \alpha T_{20}(z)$ with $\alpha = 1.05$, for which
$E = P^{-1}(\cc{-1, 1})$ consists of $20$ intervals.  Our algorithm for
computing $a_1, \ldots, a_{20}$ converged in $6$ steps.
\end{example}

\begin{figure}
{\centering
\includegraphics[width=0.48\linewidth]{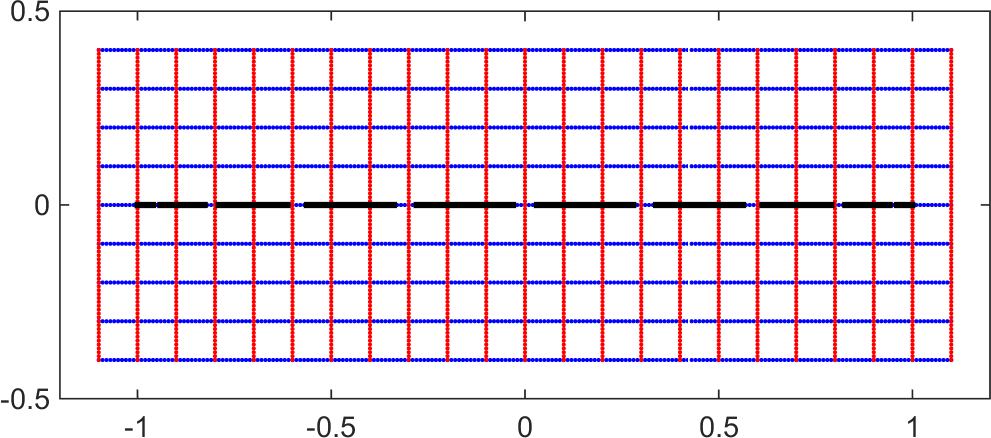}
\includegraphics[width=0.48\linewidth]{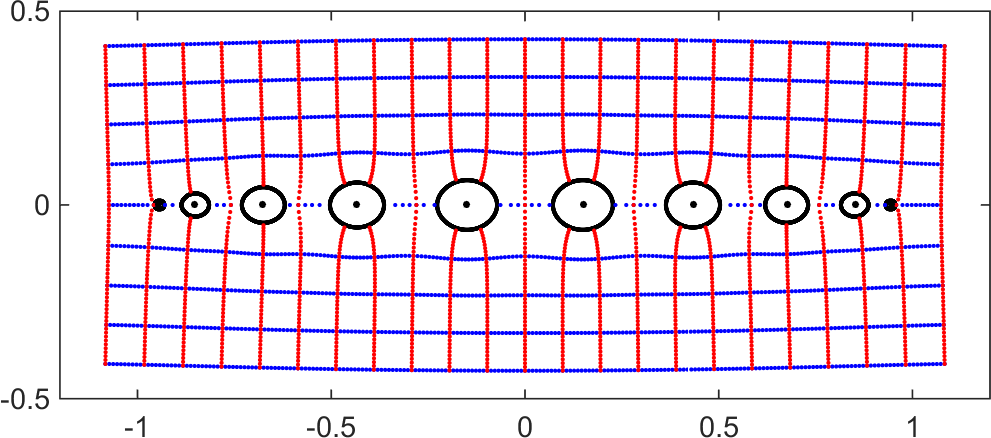}

}
\caption{Left: Polynomial pre-image $E = P^{-1}(\cc{-1, 1})$ with $\ell
= 10$ intervals (black) from Example~\ref{ex:ten_intervals} and a grid.
Right: $\partial L$ (black curves), $a_1, \ldots, a_{10}$ (black dots),
and the image of the grid under the conformal map $\Phi$.}
\label{fig:ten_intervals}
\end{figure}

\appendix
\section{Appendix}
\label{sect:appendix}

Given a set $E = \cup_{j=1}^\ell E_j$ with certain symmetries, we characterize 
the 
possible relations between the components $E_1, \ldots, E_\ell$.
In this section, unlike in the previous parts of the paper, the components 
$E_j$ are allowed to shrink to points, and we require that the $E_j$ are 
nonempty simply connected compact sets.  Note that if $E_j$ has at least two 
points, then it is an infinite compact set.

\begin{theorem} \label{thm:symmetric_E}
Let $E = \cup_{j=1}^\ell E_j$ with $E = -E$ and disjoint, simply connected 
nonempty compact sets $E_1, \ldots, E_\ell$.
\begin{enumerate}
\item \label{it:sym_E_contains_origin}
If $\ell = 1$ then $0 \in E$.
\item If $\ell = 2$ then $E_1 = -E_2$ and $0 \notin E$.
\item If $\ell = 3$ then there exists $j_0, j_1, j_2$ with $\{ j_0, j_1, j_2 \} 
= \{ 1, 2, 3 \}$ with $0 \in E_{j_0} = -E_{j_0}$ and $E_{j_1} = -E_{j_2}$.
\item If $\ell = 2k$ is even then $\{ 1, \ldots, \ell \}$ is partitioned into 
$k$ pairs $(j_1, j_2)$ with $E_{j_1} = -E_{j_2}$, and $0 \notin E$.
\item If $\ell = 2k+1$ is odd then $\{ 1, \ldots, \ell \}$ is partitioned into 
$\{ j_0 \}$ with $0 \in E_{j_0} = -E_{j_0}$ and $k$ pairs $(j_1, j_2)$ with 
$E_{j_1} = -E_{j_2}$.
\end{enumerate}
\end{theorem}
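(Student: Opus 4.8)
The plan is to exploit that the map $z \mapsto -z$ permutes the connected components of $E$, since $E = -E$ and the $E_j$ are the (finitely many) connected components. This gives an involution $\sigma$ on the index set $\{1,\ldots,\ell\}$ defined by $E_{\sigma(j)} = -E_j$. First I would record two elementary facts: (a) a component $E_j$ is fixed by $\sigma$ (i.e.\ $E_j = -E_j$) if and only if $0 \in E_j$ — indeed if $E_j = -E_j$ then $E_j$ is a nonempty connected compact set invariant under $z \mapsto -z$, and such a set must contain $0$ (otherwise $E_j$ and $-E_j$ would be disjoint, contradicting $E_j = -E_j \neq \emptyset$; more carefully, if $0 \notin E_j$ then $E_j$ lies in $\C \setminus \{0\}$ and the connected set $E_j$ together with $-E_j = E_j$ would have to be connected and symmetric while avoiding the origin, which is fine topologically, so one instead argues: pick $p \in E_j$, then $-p \in E_j$, and the segment or a path in $E_j$ from $p$ to $-p$ — here I use simple connectedness, or just connectedness plus the intermediate value theorem applied to a suitable continuous function — forces $0 \in E_j$; the cleanest route is that a nonempty compact connected set symmetric about $0$ meets every line through $0$ in a symmetric compact set, hence contains $0$); and conversely if $0 \in E_j$ then $0 \in -E_j$ too, and since $-E_j$ is some component $E_k$ sharing the point $0$ with $E_j$, disjointness of components forces $k = j$, i.e.\ $E_j = -E_j$. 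Fact (b): since $\sigma$ is an involution on a finite set, its cycle structure consists of fixed points and $2$-cycles.

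From (a) and (b) the whole theorem follows by counting. A fixed point of $\sigma$ is a component containing $0$; since the $E_j$ are disjoint, there is \emph{at most one} such component, so $\sigma$ has either zero or one fixed point, and correspondingly $0 \notin E$ or $0 \in E$. If $\sigma$ has one fixed point $j_0$ then $0 \in E_{j_0} = -E_{j_0}$ and the remaining $\ell - 1$ indices are partitioned into $2$-cycles, so $\ell - 1$ is even, i.e.\ $\ell$ is odd; if $\sigma$ has no fixed point then all $\ell$ indices are in $2$-cycles, so $\ell$ is even and $0 \notin E$. This immediately yields (iv) (the even case: $\ell = 2k$, no fixed point, $k$ pairs, $0 \notin E$) and (v) (the odd case: $\ell = 2k+1$, one fixed point $j_0$, $k$ pairs). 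Then (i), (ii), (iii) are just the instances $\ell = 1, 2, 3$ of (v), (iv), (v) respectively: for $\ell = 1$ the single component is necessarily fixed so $0 \in E$; for $\ell = 2$ there can be no fixed point (else $\ell$ would be odd), so $E_1 = -E_2$ and $0 \notin E$; for $\ell = 3$ there is exactly one fixed point $E_{j_0}$ with $0 \in E_{j_0} = -E_{j_0}$ and the other two form a pair $E_{j_1} = -E_{j_2}$.

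The only genuinely non-formal point is Fact~(a), specifically the implication $E_j = -E_j \Rightarrow 0 \in E_j$ for a nonempty connected compact set. The hard part will be phrasing this cleanly: I would argue that the function $f(z) = \lvert z \rvert$ restricted to $E_j$ attains a minimum at some $p \in E_j$; then $-p \in E_j$ as well, and consider a continuous path $\gamma \colon [0,1] \to E_j$ from $p$ to $-p$ (available since a compact connected subset of $\C$ that is, say, locally connected — or here simply connected by hypothesis — is path connected; alternatively one avoids paths entirely by the following topological argument). The projection-free argument I would actually use: let $r = \min_{z \in E_j} \lvert z\rvert \geq 0$ and suppose $r > 0$. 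Both $p$ and $-p$ lie on the circle $\lvert z \rvert = r$, which is a support circle of the compact set $E_j$ (i.e.\ $E_j$ lies in its closed exterior). But then the open disk $\lvert z \rvert < r$ is a connected open set disjoint from $E_j$ and containing $0$; this does not by itself give a contradiction, so this line needs care. The robust fix: use that $E_j$ is symmetric and connected, hence $E_j \cup \{0\}$ — no. The truly clean statement is: a connected subset $A \subseteq \C$ with $A = -A$ that does not contain $0$ would have $A \cap \R_{>0}(\cos\theta + \ii\sin\theta)$ symmetric under $v \mapsto -v$ for the ray direction, which is impossible for a ray. I will instead simply invoke: write $A = -A$, pick $p \in A$, and note the continuous function $h \colon A \to \R$, $h(z) = \arg$-independent radial coordinate fails; so I commit to the path argument — $E_j$ is simply connected and compact, hence path connected (a simply connected planar continuum is locally connected, hence path connected by Hahn–Mazurkiewicz), take a path from $p$ to $-p$ inside $E_j$, apply the intermediate value theorem to a linear functional that separates $p$ from $-p$, concluding the path meets the hyperplane through $0$; iterating over a basis of functionals, or noting that the path $t \mapsto \gamma(t)$ and $t \mapsto -\gamma(1-t)$ are two paths in $E_j$ from $p$ to $-p$, one sees by a degree/parity argument that the closed loop they form winds around $0$ unless the path already hits $0$ — hence $0 \in E_j$. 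I expect this topological lemma about symmetric continua to be the main obstacle; everything downstream is bookkeeping on the involution $\sigma$.
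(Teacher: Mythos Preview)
Your overall architecture---define the involution $\sigma$ on $\{1,\ldots,\ell\}$ by $-E_j = E_{\sigma(j)}$, observe $\sigma^2 = \id$, and then count fixed points versus transpositions---is exactly what the paper does, and the bookkeeping in your last two paragraphs is correct.  The only substantive step is your Fact~(a), the implication $E_j = -E_j \Rightarrow 0 \in E_j$, and here your argument has a genuine gap.

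You assert that ``a simply connected planar continuum is locally connected, hence path connected by Hahn--Mazurkiewicz.''  This is false.  In this paper, ``simply connected compact set'' means a compact set whose complement in $\widehat{\C}$ is connected; the closed topologist's sine curve
\[
S \;=\; \overline{\{(x,\sin(1/x)) : 0 < x \le 1\}} \;=\; \{(x,\sin(1/x)) : 0 < x \le 1\}\cup\bigl(\{0\}\times[-1,1]\bigr)
\]
is such a set (its complement in $\widehat{\C}$ is connected), yet it is neither locally connected nor path connected.  So you cannot produce a path from $p$ to $-p$ in $E_j$ in general, and the winding-number argument that follows---which, incidentally, \emph{would} work if a path existed, since the loop $\gamma * (-\gamma)$ has odd winding number about $0$---never gets off the ground.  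Your earlier attempts (minimum of $\lvert z\rvert$, intermediate value on a linear functional) do not work either, as you yourself note; the unit circle shows that connectedness alone cannot force $0\in E_j$, so the simple-connectivity hypothesis must be used in an essential way, and you have not managed to use it.

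The paper sidesteps all of this topology by an analytic trick: if $E_j$ is a single point the conclusion is trivial, and otherwise one applies Walsh's Theorem~\ref{thm:walsh_map} (here just the Riemann map) to the single component $E_j$.  The symmetry $E_j = -E_j$ forces $a_1 = 0$ and $\Phi$ odd; then $0\notin E_j$ would give $\Phi(0)=0\in L$, contradicting $\Phi(0)\in\comp{L}$.  If you prefer a purely topological route, one that actually works is Alexander duality: connectedness of $\widehat{\C}\setminus E_j$ gives $\check{H}^1(E_j)=0$, so a continuous branch of $\arg$ exists on $E_j\subseteq\C\setminus\{0\}$; writing $\theta(-z)=\theta(z)+\pi+2\pi k$ for a fixed integer $k$ (by connectedness) and applying this twice yields $-2\pi = 4\pi k$, a contradiction.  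Either of these replaces your broken path argument; the rest of your plan then goes through.
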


\begin{proof}
First, let $\ell = 1$, i.e., let $E$ be a simply connected nonempty compact set 
with $E = -E$.  If $E$ consists of a single point then $E = -E$ implies $E = \{ 
0 \}$.  Otherwise, $E$ is an infinite compact set 
(since $E$ is simply connected with at least two distinct points), and we can 
apply Theorem~\ref{thm:walsh_map} to $E$.  The symmetry $E = -E$ implies that 
$L = \{ w \in \C : \abs{w} \leq \capacity(E) \}$ since $a_1 = 0$ by 
Theorem~\ref{thm:aj_symm_origin}, and that $\Phi$ is odd.
The assumption $0 \in \comp{E}$ leads to the contradiction $0 = \Phi(0) \in 
\comp{L}$.  This shows that $0 \in E$ and concludes the proof 
of~\ref{it:sym_E_contains_origin}.

Let $\ell \geq 2$ and $E = \cup_{j=1}^\ell E_j$ with $E = -E$.  The function 
$f(z) = -z$ satisfies $f(E) = E$.
For $j \in \{ 1, 2, \ldots, \ell \}$, the set $f(E_j)$ is connected and, since 
$f(E_j) \subseteq E$, there exists $k_j \in \{ 1, \ldots, \ell \}$ with $f(E_j) 
\subseteq E_{k_j}$.
Define $\sigma : \{ 1, \ldots, \ell \} \to \{ 1, \ldots, \ell \}$, $j \mapsto 
k_j$.  We show that $\sigma$ is a permutation.
Suppose it is not, then there exists $k_0 \in \{ 1, \ldots, \ell \}$ such that 
$f(E_j) \not\subseteq E_{k_0}$ for all $j$, which implies
$E = f(E) = \cup_{j=1}^\ell f(E_j) \subseteq \cup_{j=1, j \neq k_0}^\ell E_j 
\subsetneq E$, a contradiction.
Thus $\sigma$ is a permutation.  Since $f(E) = E$, we also have $f(E_j) = 
E_{k_j}$ for all $j \in \{ 1, \ldots, \ell \}$.

We write the permutation $\sigma$ as a product of disjoint cycles.
Since $f$ is an involution, i.e., $f(f(z)) = z$, we have $\sigma^2 = \id$, 
which shows that the cycles have length $1$ or $2$ (transpositions).
A cycle of length $1$, say $(j)$, corresponds to a set with $-E_j = f(E_j) = 
E_j$, which satisfies $0 \in E_j$ by~\ref{it:sym_E_contains_origin}.  Since 
the sets $E_1, \ldots, E_\ell$ are disjoint, there can be no other cycle of 
length $1$.
Thus, $\sigma$ is the product of (disjoint) transpositions and possibly one 
cycle of length $1$.

The remaining assertions now follow easily.
If $\ell = 2$ then $\sigma = (1, 2)$ and $f(E_1) = E_2$, i.e., $E_1 = -E_2$.
More generally, if $\ell = 2k$ is even, $\sigma$ is a product of $k$ (disjoint) 
transpositions and there are $k$ pairs $(j_1, j_2)$ which partition $\{ 1, 
\ldots, 2k \}$, such that $E_{j_1} = -E_{j_2}$.
In particular, $0 \notin E$, since otherwise $0 \in E_{j_1} = -E_{j_2}$ implies 
$0 \in E_{j_1} \cap E_{j_2}$, a contradiction.
If $\ell = 3$ then $\sigma = (j_0) (j_1, j_2)$ with $\{ j_0, j_1, j_2 \} = \{ 
1, 2, 3 \}$, that is $E_{j_1} = -E_{j_2}$ and $E_{j_0} = -E_{j_0}$ with $0 \in 
E_{j_0}$.
More generally, if $\ell = 2k+1$ is odd, $\sigma$ is a product of one cycle 
of length $1$ and $k$ (disjoint) transpositions, i.e., there is one $j_0 \in \{ 
1, \ldots, 2k+1 \}$ with $0 \in E_{j_0} = - E_{j_0}$ and $k$ pairs $E_{j_1} = 
-E_{j_2}$.
\end{proof}

The method of proof of Theorem~\ref{thm:symmetric_E} yields an analogous
statement for sets that are symmetric with respect to the real line.

\begin{corollary} \label{cor:E_symm_wrt_R}
Let $E = \cup_{j=1}^\ell E_j$ with $E^* = E$ and disjoint, simply connected 
nonempty compact sets $E_1, \ldots, E_\ell$.
Then $\{ 1, 2, \ldots, \ell \}$ is partitioned into sets $\{ j_0 
\}$ with one element, for which $E_{j_0}^* = E_{j_0}$, and sets $\{ j_1, j_2 
\}$ with two distinct elements, for which $E_{j_1}^* = E_{j_2}$.

In the special case $\ell = 2$, i.e., $E = E_1 \cup E_2$ with $E^* = E$, then 
either $E_1^* = E_1$ and $E_2^* = E_2$, or $E_1^* = E_2$.
\end{corollary}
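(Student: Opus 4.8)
The plan is to repeat, essentially verbatim, the combinatorial argument from the proof of Theorem~\ref{thm:symmetric_E}, replacing the involution $z \mapsto -z$ by complex conjugation $f(z) = \conj{z}$. First I would record that $f$ is a homeomorphism of $\C$ with $f \circ f = \id$, and that the hypothesis $E^* = E$ says precisely $f(E) = E$. Since each $E_j$ is connected, its image $f(E_j)$ is a connected compact subset of $E$, hence contained in exactly one component; this defines a map $\sigma : \{1, \ldots, \ell\} \to \{1, \ldots, \ell\}$, $j \mapsto k_j$, where $f(E_j) \subseteq E_{k_j}$.

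Next I would show $\sigma$ is a permutation by the same argument as in Theorem~\ref{thm:symmetric_E}: if some index $k_0$ were not in the image of $\sigma$, then
\[
E = f(E) = \bigcup_{j=1}^\ell f(E_j) \subseteq \bigcup_{\substack{j=1 \\ j \neq k_0}}^\ell E_j \subsetneq E,
\]
a contradiction. Because $f$ maps $E$ bijectively onto itself and $\sigma$ is a bijection of the index set, one in fact gets $f(E_j) = E_{\sigma(j)}$ for every $j$ (not merely the inclusion). Since $f$ is an involution, $\sigma^2 = \id$, so $\sigma$ decomposes into disjoint cycles of length $1$ and length $2$.

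Then I would simply read off the statement: a fixed point $(j_0)$ of $\sigma$ corresponds to $E_{j_0}^* = f(E_{j_0}) = E_{j_0}$, and a transposition $(j_1, j_2)$ corresponds to $E_{j_1}^* = E_{j_2}$ (equivalently $E_{j_2}^* = E_{j_1}$, since $f$ is an involution). This yields the asserted partition of $\{1, \ldots, \ell\}$ into singletons and unordered pairs. For the special case $\ell = 2$, the permutation $\sigma$ is either the identity --- giving $E_1^* = E_1$ and $E_2^* = E_2$ --- or the transposition $(1,2)$ --- giving $E_1^* = E_2$.

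I do not expect a genuine obstacle here: the proof is purely the set-theoretic and topological skeleton of the earlier argument, and the only point worth flagging is a \emph{contrast} rather than a difficulty. Unlike the reflection $z \mapsto -z$, conjugation fixes the entire real axis, so a component $E_{j_0}$ with $E_{j_0}^* = E_{j_0}$ need not shrink to a point and there is no bound on the number of singletons in the partition. Consequently, the step in the proof of Theorem~\ref{thm:symmetric_E} that invokes Theorem~\ref{thm:aj_symm_origin} to force the center to be $0$ and hence $0 \in E_{j_0}$ --- which is what limited the number of length-one cycles there --- has no analogue and is deliberately omitted.
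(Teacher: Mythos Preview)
Your proposal is correct and follows essentially the same approach as the paper's own proof, which simply sets $f(z) = \conj{z}$, invokes the argument of Theorem~\ref{thm:symmetric_E} to obtain the permutation $\sigma$ with $\sigma^2 = \id$, and reads off the cycle decomposition. Your added remark contrasting with the $z \mapsto -z$ case---that here there is no bound on the number of length-one cycles---is accurate and explains exactly why the paper's proof omits that step.
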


\begin{proof}
Let $f(z) = \conj{z}$ and $f(E) = E^* = E$, for which $f(f(z)) = z$ for all 
$z \in \C$.  Proceeding as in the proof of Theorem~\ref{thm:symmetric_E}, we obtain that $f(E_j) = E_{\sigma(j)}$ with a permutation $\sigma$, which decomposes into cycles of length $1$ or $2$.
Hence the components of $E$ are either symmetric ($E_j^* = E_j$, corresponding 
to a cycle $(j)$) or come in pairs $E_{j_1}^* = E_{j_2}$ (corresponding to a 
cycle $(j_1, j_2)$ of length $2$).
\end{proof}

\begin{remark} \label{rem:symmetric_E}
The method of proof of Theorem~\ref{thm:symmetric_E} can be generalized 
to $E = \cup_{j=1}^\ell E_j$ with disjoint, simply connected nonempty compact 
sets $E_1, \ldots, E_\ell$, and a continuous function $f : E \to E$ with $f(E) 
= E$ and $f^k = f \circ \ldots \circ f = \id$ for some integer $k \geq 2$ 
(where $k$ is minimal with this property).
Then $\sigma$ defined as in the proof of Theorem~\ref{thm:symmetric_E}
is a permutation that satisfies $\sigma^k = \id$.  In 
particular, $\sigma$ can be written as a product of disjoint cycles, where the 
length of each cycle divides $k$.
Indeed, $\sigma^k = \id$ implies $c^k = \id$ for every cycle $c$ of $\sigma$,
hence a cycle $c$ has length $l(c) \leq k$.  Write $k 
= q l(c) + r$ with $0 \leq r < l(c)$.  Then $\id = c^k = c^r$, which 
implies $r = 0$ since $r < l(c)$ and $l(c)$ is the smallest positive
exponent with $c^{l(c)} = \id$.  Hence, $l(c)$ divides $k$.
\end{remark}

\bibliographystyle{siam}
\bibliography{walshmap.bib}

\end{document}